\newtheorem{thm}{\textbf{Theorem}}
\newtheorem{cor}{\textbf{Corollary}}
\newtheorem{definition}{\textbf{Definition}}
\newtheorem{lem}{\textbf{Lemma}}
\newtheorem{assumption}{\textbf{Assumption}}
\newtheorem{remark}{\textbf{Remark}}
\DeclareMathOperator*{\argmax}{argmax}
\DeclareMathOperator*{\argmin}{argmin}
\newcommand{\VaR}{\mathrm{VaR}}
\begin{document}

    \title{\vspace{-3.5cm}
        Markov Decision Processes with Value-at-Risk Criterion}

    \author{{Li Xia, \ Jinyan Pan \thanks{L. Xia and J. Pan are both with the School of Business, Sun Yat-Sen University, Guangzhou 510275, China. (email: xiali5@sysu.edu.cn).}}
    }
    \date{}
    \maketitle

\begin{abstract}
Value-at-risk (VaR), also known as quantile, is a crucial risk
measure in finance and other fields. However, optimizing VaR metrics
in Markov decision processes (MDPs) is challenging because VaR is
non-additive and the traditional dynamic programming is
inapplicable. This paper conducts a comprehensive study on VaR
optimization in discrete-time finite MDPs. We consider VaR in two
key scenarios: the VaR of steady-state rewards over an infinite
horizon and the VaR of accumulated rewards over a finite horizon. By
establishing the equivalence between the VaR maximization MDP and a
series of probabilistic minimization MDPs, we transform the VaR
maximization MDP into a constrained bilevel optimization problem.
The inner-level is a policy optimization of minimizing the
probability that MDP rewards fall below a target $\lambda$, while
the outer-level is a single parameter optimization of $\lambda$,
representing the target VaR. For the steady-state scenario, the
probabilistic minimization MDP can be resolved using the expectation
criterion of a standard MDP. In contrast, for the finite-horizon
case, it can be addressed via an augmented-state MDP. Using bilevel
optimization, we prove the optimality of deterministic stationary
policies for steady-state VaR MDPs and deterministic
history-dependent policies for finite-horizon VaR MDPs. We derive
both a policy improvement rule and a necessary-sufficient condition
for optimal policies in these two scenarios. Furthermore, we develop
policy iteration type algorithms to maximize the VaR in MDPs and
prove their convergence. Our results are also extended to the
counterpart of VaR minimization MDPs after appropriate
modifications. Finally, we conduct numerical experiments, including
renewable energy management, to demonstrate the computational
efficiency and practical applicability of our approach. Our study
paves a novel way to explore the optimization of quantile-related
metrics in MDPs through the duality between quantiles and
probabilities.
\end{abstract}

\textbf{Keywords:} Markov decision process; value-at-risk; quantile;
probabilistic MDP; \\ bilevel optimization; policy iteration

\section{Introduction}\label{sec:intro}
Defined as the $\alpha$-quantile of a return distribution,
value-at-risk (VaR) is a fundamental measure of downside risk,
widely used in decision-making with uncertainty. Due to its
intuitive interpretation and regulatory significance, VaR has become
a cornerstone of financial risk management
\citep{duffie1997overview, christoffersen2011elements}. It also
plays a key role in the Basel Accord \citep{berkowitz2002accurate},
shaping capital requirements and risk control frameworks. Moreover,
as an equivalent quantile metric, VaR has found extensive
applications in other domains beyond finance, such as supply chain
\citep{kouvelis2019integrated}, energy \citep{spada2018comparison},
healthcare \citep{olsen2017use}, and queueing systems
\citep{bandi2024robust}, where risk-sensitive and quantile
optimization are also crucial.

The metric of VaR is non-convex and of combinatorial nature. The
computation of VaR cannot be initiated before the entire sample set
has been fully collected, which differs substantially from the
computation of expectation-type metrics. These features make the
optimization of VaR significantly challenging. Actually, VaR
optimization has been proved to be an NP-hard problem
\citep{benati2007mixed}. Research on VaR optimization remains
relatively scarce. Some substantial progress has been made in the
VaR optimization in portfolio management \citep{ahn1999optimal,
basak2001value, ghaoui2003worst, cuoco2008optimal, oum2010optimal}.
However, the above literature requires that the return distribution
is normal or lognormal, making the problem computable
\citep{goh2012portfolio}. For general VaR optimization, various
studies have proposed heuristic algorithms, including a smoothing
method that filters out local non-convexities
\citep{gaivoronski2005value} and another method using conditional
value-at-risk (CVaR) to approximate VaR \citep{larsen2002algorithms,
mausser2014cvar, romanko2016robust}. There also exists an exact
solution approach for the VaR optimization that reformulates the
problem as a mixed-integer programming (MIP) \citep{benati2007mixed,
luedtke2014branch, feng2015practical, babat2018computing,
pavlikov2018optimization}. However, it is difficult to solve with
current MIP solvers for medium to large-scale instances of the
problem. These aforementioned works focus on either static scenarios
\citep{larsen2002algorithms, gaivoronski2005value, benati2007mixed,
mausser2014cvar, romanko2016robust, babat2018computing,
pavlikov2018optimization}, or dynamic portfolio management with
specific distribution assumptions \citep{basak2001value,
cuoco2008optimal}. The dynamic VaR optimization in general cases
remains underexplored, posing both theoretical and computational
challenges \citep{rockafellar2017risk}.

Markov decision process (MDP) is a fundamental model to study
stochastic dynamic optimization problems \citep{puterman1994markov,
Hernandez-Lerma1996}. It is natural to study dynamic VaR
optimization in the framework of MDPs, which belongs to the
community of risk-sensitive MDPs. The main challenge of
risk-sensitive MDPs is the failure of dynamic programming principle,
caused by the non-additive and non-Markovian properties of risk
metrics \citep{bauerle2024markov}. The study of risk-sensitive MDPs
can be traced back to \cite{howard1972}, who use an exponential
utility function to capture risk preferences. Based on the special
structure of the exponential function, they establish a Bellman
optimality equation in product form. \cite{bauerle2014more} further
investigate the general expected utility criterion in MDPs. They
adopt a state-augmented approach by introducing accumulated rewards
as an auxiliary state and subsequently establish the Bellman
optimality equation. The state-augmented approach is a widely used
technique for the risk optimization of accumulated rewards in MDPs,
such as in the probability criterion \citep{white1993,huo2017},
variance criterion \citep{huang2015mean}, and CVaR criterion
\citep{bauerle2011, huang2016,Ugurlu17}. For a more comprehensive
introduction of risk-sensitive MDPs, interested audience can refer
to a recent survey by \cite{bauerle2024markov}.

As a quantile-based risk metric, CVaR MDPs have been widely studied
in the literature to tackle the challenge of the failure of dynamic
programming, exploiting the convexity property of CVaR. One research
stream is based on the primal representation of CVaR
\citep{Rockafellar02}, converting the CVaR MDP into a bilevel
optimization problem \citep{bauerle2011, huang2016,Ugurlu17,
xia2023risk}. The bilevel optimization problem is then solved by
adopting the aforementioned state-augmented approach for CVaR
optimization of accumulated rewards \citep{bauerle2011,
huang2016,Ugurlu17}, and by the sensitivity-based optimization
method for CVaR optimization of steady-state rewards
\citep{xia2023risk}. Another stream utilizes the dual representation
of CVaR to derive a risk-level decomposition of risk measures
\citep{pflug2016time}. The core idea of this method is to decompose
CVaR by different probability levels in the range $[0,1]$  and
introduce these probability levels to augment the state space. Using
this decomposition, the Bellman equations and associated
reinforcement learning algorithms are then derived to optimizing
CVaR MDPs \citep{chow2015risk, chapman2021risk, ding2022cvar,
ding2022sequential}.

While risk-sensitive MDPs have been extensively studied across
various scenarios, existing methods predominantly rely on specific
convex optimization formulations for risk metrics, such as CVaR and
variance. These approaches, however, are not readily applicable to
VaR MDPs since VaR is non-convex. The computation of VaR or
quantile, which depends on the entire sample trajectory of MDPs, is
neither additive nor Markovian. Thus, the optimization of VaR MDPs
remains fundamentally challenging. \cite{filar1995percentile}
investigate the VaR (they call it \emph{percentile}) maximization of
the limiting average reward of infinite-horizon MDPs, which is a
rather restricted setting since the limiting average reward tends to
be deterministic for communicating MDPs. Their focus is the case of
multichain MDPs, where they utilize the chain decomposition theory
to iteratively determine the maximum VaR as well as an optimal
policy. They remark that the VaR maximization in a general setting
such as discounted MDPs is an \emph{open problem}.
\cite{li2022quantile} are the first to treat this challenge of VaR
optimization in a general MDP setting, i.e., the optimization of VaR
(they call it \emph{quantile}) of finite-horizon and
infinite-horizon discounted accumulated rewards. They transform the
VaR of aggregated random variables into a max-min optimization
problem that computes the VaR of each individual variable. Building
upon this transformation, they establish the optimality equation by
leveraging VaR's translation invariance and the state-augmented
approach, and then propose a value iteration type algorithm for
solving the problem. Following the stream of risk-level
decomposition, \cite{hau2023dynamic} rigorously propose the dual
representation of VaR that closely resembles the VaR MDP
decomposition in \cite{li2022quantile}. Based on the analysis in
\cite{li2022quantile} and \cite{hau2023dynamic}, \cite{hau2024q}
further study the model-free method to optimize the VaR MDP, and
propose a new Q-learning algorithm for optimizing the VaR of
discounted accumulated rewards with global convergence and
performance guarantees. \cite{Gilbert2016,jiang2023quantile} both
develop a two-timescale policy gradient algorithm to optimize the
VaR of accumulated rewards, but it lacks dynamic programming
techniques and suffers from local optima. Although the investigation
of VaR MDPs has been initiated by recent literature
\citep{li2022quantile}, this approach is not intuitive as it heavily
depends on a sophisticated decomposition technique. Moreover, it is
also of significance to develop other optimization algorithms beyond
value iteration, such as policy iteration. Thus, a more
comprehensive study of VaR MDPs is ongoing and challenging.

In this paper, we study the optimization of VaR MDPs from a new and
more intuitive perspective, by establishing its equivalence with
probabilistic MDPs. We focus on two types of MDP settings, including
the VaR optimization of accumulated rewards over a finite horizon
and the VaR optimization of steady-state rewards over an infinite
horizon. By proposing a duality relation between VaR and probability
metrics, we convert the VaR maximization MDP to a bilevel
optimization problem with constraints, where the inner level is a
probabilistic minimization MDP with a given target value and the
outer level is a static minimization problem with respect to the
target value. With the nested reformulation, we prove the existence
of an optimal deterministic policy for the VaR MDPs. By comparing
the optimal value of the inner probabilistic minimization MDP with
the given probability level $\alpha$, we further derive a policy
improvement rule and an optimality condition, under which we propose
a policy iteration type algorithm for optimizing VaR in MDPs.

Specifically, we first focus on the maximization of steady-state VaR
MDP among randomized stationary policy space, aiming to find a
policy that maximizes the VaR of steady-state rewards. With the
relation of probability and expectation metrics, we degenerates the
inner probabilistic minimization MDP into a standard average MDP
with a new defined reward function. Therefore, policy improvement
can be achieved by solving a standard average MDP, and subsequently
policy evaluation is conducted by computing the corresponding VaR of
the improved policy based on its steady-state distribution.
Repeating the procedure of policy iteration can generate a series of
strictly improved policies and converge to an optimal policy. We
then consider the maximization of finite-horizon VaR MDP among
randomized history-dependent policy space, aiming to find a policy
that maximizes the VaR of accumulated rewards. Via the
state-augmented approach, we convert the inner probabilistic
minimization MDP to a standard finite-horizon MDP, introducing an
auxiliary state of the remaining goal at current stage.
Consequently, policy improvement is equivalent to solving a standard
finite-horizon MDP with an augmented state space, which also enables
dynamic programming to evaluate the corresponding VaR. The global
convergences of policy iteration algorithms for both steady-state
VaR maximization MDP and finite-horizon VaR maximization MDP are
proved. Furthermore, we extend our approach to minimizing VaR for
both steady-state costs and accumulated costs. The corresponding
policy iteration algorithms are also proposed after appropriate
modifications on the version of VaR maximization. Finally, we
conduct numerical experiments to validate our main results,
including the algorithm's global convergence, the comparison between
our policy iteration algorithm and the value iteration algorithm by
\cite{li2022quantile}, and an application to renewable energy
management in microgrid systems.

The contributions of this paper are threefold. First, we propose a
new methodology to study VaR MDPs through establishing the
equivalent relations with probabilistic MDPs. The optimization of a
VaR MDP can be transformed to solving a series of probabilistic MDPs
under certain conditions. This methodology is intuitive and
promising to provide a general framework for VaR optimization in
various settings of MDPs. Second, we develop a policy iteration
algorithm for solving VaR MDPs, which complements a more complete
theoretical and algorithmic framework. Furthermore, our policy
iteration algorithm is shown to be more efficient than the existing
value iteration algorithm, which is similar to their counterparts
for average criteria in classical MDP theory. Third, our approach
handles both the finite-horizon VaR MDPs and steady-state VaR MDPs,
where the latter one is a new MDP setting that has not been explored
in the literature.

The rest of the paper is organized as follows. In
Section~\ref{sec:model}, we propose the duality relations between
VaR and probability metrics, and also give the definition of VaR
optimality criteria in MDPs. We study the optimization theory and
algorithm for steady-state VaR maximization MDPs and finite-horizon
VaR maximization MDPs in Sections~\ref{sec:svarmdp} and
\ref{sec:tvarmdp}, respectively. Then we discuss the extension to
VaR minimization MDPs in Section~\ref{sec:exten}. Numerical studies
to illustrate our main results are presented in
Section~\ref{sec:numer}. Finally, we conclude the paper in
Section~\ref{sec:conclu}.

\section{Problem Formulation}\label{sec:model}
In this section, we first present some fundamental results to show
the relations between VaR and probability optimization, which are
utilized to study VaR MDPs in later sections. Then, we introduce the
definition of VaR optimization in various MDP settings.

\subsection{Preliminaries}
Let $X$ be a random variable with distribution function
$F_X(\cdot)$. The VaR of $X$ at confidence level $\alpha \in (0,1]$
is defined as
     \begin{equation}\label{eq_var-def}
        \VaR_{\alpha}(X) := \inf \{ \lambda \in \mathbb R: F_{X}(\lambda) \geq \alpha
        \} = \inf \{ \lambda \in \mathbb R: \mathbbm P(X \leq \lambda) \geq \alpha
        \}.
     \end{equation}
     When $\alpha = 0$, we define $\VaR_{\alpha}(X) := \inf\{X\}$. The
     above definition of $\VaR_{\alpha}(X)$ is also called the
     \emph{$\alpha$-quantile} or \emph{inverse distribution function} of
     $X$, which may also be denoted by $F^{-1}_{X}(\alpha)$. When $X$ is
     discrete, $F^{-1}_{X}(\alpha)$ is a \emph{left-continuous} inverse
     distribution function. Obviously, $\VaR_{\alpha}(X)$ or
     $F^{-1}_{X}(\alpha)$ is non-decreasing in $\alpha$ and
     $F_{X}(\lambda)$ is non-decreasing in $\lambda$.

     We show that the VaR or $\alpha$-quantile of random variables has
     the following properties.
     \begin{equation*}
        F_{X}(\VaR_{\alpha}(X)) = F_{X}(F^{-1}_{X}(\alpha)) \geq \alpha,
     \end{equation*}
     where the inequality can be replaced by ``$=$" when $X$ is a
     continuous random variable and it needs to remain when $X$ is
     discrete. Furthermore, we have the following lemma.

     \begin{lem}\label{lem_VaRmax}{\rm(\textbf{Duality Relation for VaR Maximization})}
        For two random variables $X$ and $Y$, let $\lambda =
        \VaR_{\alpha}(X)$, we have
        \begin{eqnarray}
            \mathbbm P(Y \leq \lambda) < \alpha &\Longleftrightarrow & \VaR_{\alpha}(Y) > \VaR_{\alpha}(X), \label{eq_VaRmax-impr}\\
            \mathbbm P(Y \leq \lambda) \geq \alpha &\Longleftrightarrow&
            \VaR_{\alpha}(Y) \leq \VaR_{\alpha}(X).  \label{eq_VaRmax-optim}
        \end{eqnarray}
     \end{lem}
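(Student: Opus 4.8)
The plan is to first notice that the hypothesis ``$\lambda=\VaR_{\alpha}(X)$'' enters only through the number $\lambda$, so the lemma is really a statement about a single random variable $Y$ and a threshold $\lambda$. Moreover, the two displayed equivalences are contrapositives of one another: $\{\mathbbm P(Y\le\lambda)<\alpha\}$ is the negation of $\{\mathbbm P(Y\le\lambda)\ge\alpha\}$, and since $\VaR_{\alpha}(X)=\lambda$, the statement $\VaR_{\alpha}(Y)>\VaR_{\alpha}(X)$ is the negation of $\VaR_{\alpha}(Y)\le\VaR_{\alpha}(X)$. Hence it suffices to prove \eqref{eq_VaRmax-optim} in the form
\[
\mathbbm P(Y\le\lambda)\ge\alpha \quad\Longleftrightarrow\quad \VaR_{\alpha}(Y)\le\lambda ,
\]
and \eqref{eq_VaRmax-impr} then follows at once. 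Throughout I take $\alpha\in(0,1]$ and, as in the finite MDP setting, all random variables bounded, so that the quantiles in question are finite real numbers.

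For the ``$\Rightarrow$'' direction I would simply unwind Definition~\eqref{eq_var-def}: if $\mathbbm P(Y\le\lambda)=F_{Y}(\lambda)\ge\alpha$, then $\lambda$ belongs to the set $\{\mu\in\mathbb R:F_{Y}(\mu)\ge\alpha\}$, whose infimum is by definition $\VaR_{\alpha}(Y)$, so $\VaR_{\alpha}(Y)\le\lambda$. For ``$\Leftarrow$'' the key ingredient is the property recorded just before the lemma, namely $F_{Y}(\VaR_{\alpha}(Y))\ge\alpha$, which holds because $F_{Y}$ is non-decreasing and right-continuous. Then, assuming $\VaR_{\alpha}(Y)\le\lambda$, monotonicity of $F_{Y}$ gives $\mathbbm P(Y\le\lambda)=F_{Y}(\lambda)\ge F_{Y}(\VaR_{\alpha}(Y))\ge\alpha$, as required.

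The argument is short and I do not anticipate a genuine obstacle; the only step requiring care is the ``$\Leftarrow$'' direction, where one must invoke the \emph{inequality} $F_{Y}(\VaR_{\alpha}(Y))\ge\alpha$ rather than the equality $F_{Y}(F^{-1}_{Y}(\alpha))=\alpha$, since the latter can fail precisely when $Y$ is discrete---the case most relevant to finite MDPs. Once the membership $\VaR_{\alpha}(Y)\in\{\mu\in\mathbb R:F_{Y}(\mu)\ge\alpha\}$ is established, both implications reduce to a single line using the monotonicity of $F_{Y}$ and the infimum characterization of VaR.
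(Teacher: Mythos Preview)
Your proof is correct and essentially mirrors the paper's own argument: both observe that \eqref{eq_VaRmax-impr} and \eqref{eq_VaRmax-optim} are logical negations of one another and then verify one of them directly from the definition \eqref{eq_var-def} together with monotonicity of $F_Y$. The only cosmetic difference is that the paper chooses to prove \eqref{eq_VaRmax-impr} first (and handles its ``$\Leftarrow$'' direction by contradiction, which amounts exactly to your ``$\Rightarrow$'' of \eqref{eq_VaRmax-optim}), whereas you prove \eqref{eq_VaRmax-optim} first and make explicit use of the inequality $F_{Y}(\VaR_{\alpha}(Y))\ge\alpha$ stated just before the lemma---a point the paper leaves implicit in its appeal to monotonicity.
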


     \begin{proof}
        We first prove (\ref{eq_VaRmax-impr}). Since $F_{Y}(\lambda) :=
        \mathbbm P(Y \leq \lambda) < \alpha$ and $F_{Y}(\lambda)$ is
        non-decreasing in $\lambda$, from the definition of VaR in
        (\ref{eq_var-def}), we directly have $\VaR_{\alpha}(Y) > \lambda =
        \VaR_{\alpha}(X)$. Thus, the ``$\Rightarrow$'' direction in
        (\ref{eq_VaRmax-impr}) is proved. For the ``$\Leftarrow$'' direction in
        (\ref{eq_VaRmax-impr}), if $\VaR_{\alpha}(Y) > \VaR_{\alpha}(X) =
        \lambda$, we must have $F_{Y}(\lambda) < \alpha$, because the
        otherwise assumption of $F_{Y}(\lambda) \geq \alpha$ indicates
        $\VaR_{\alpha}(Y) \leq \lambda$ by definition (\ref{eq_var-def}),
        which contradicts the condition $\VaR_{\alpha}(Y) > \lambda$. Thus,
        (\ref{eq_VaRmax-impr}) is proved.

        We can use a similar argument to prove (\ref{eq_VaRmax-optim}), and
        the two statements (\ref{eq_VaRmax-impr}) and
        (\ref{eq_VaRmax-optim}) are actually equivalent.
     \end{proof}

     Lemma~\ref{lem_VaRmax} holds naturally for continuous random
     variables. However, for discrete random variables,
     Lemma~\ref{lem_VaRmax} seems not straightforward since the
     associated function $F_X(\cdot)$ and $F^{-1}_X(\cdot)$ is
     right-continuous and left-continuous, respectively, as illustrated
     by Figure~\ref{fig_leftcontinuousF}.
     Thus, Lemma~\ref{lem_VaRmax} is
     especially useful for the VaR maximization of discrete random
     variables.

     \begin{figure}[htbp]
        \centering
        \includegraphics[width=0.85\columnwidth]{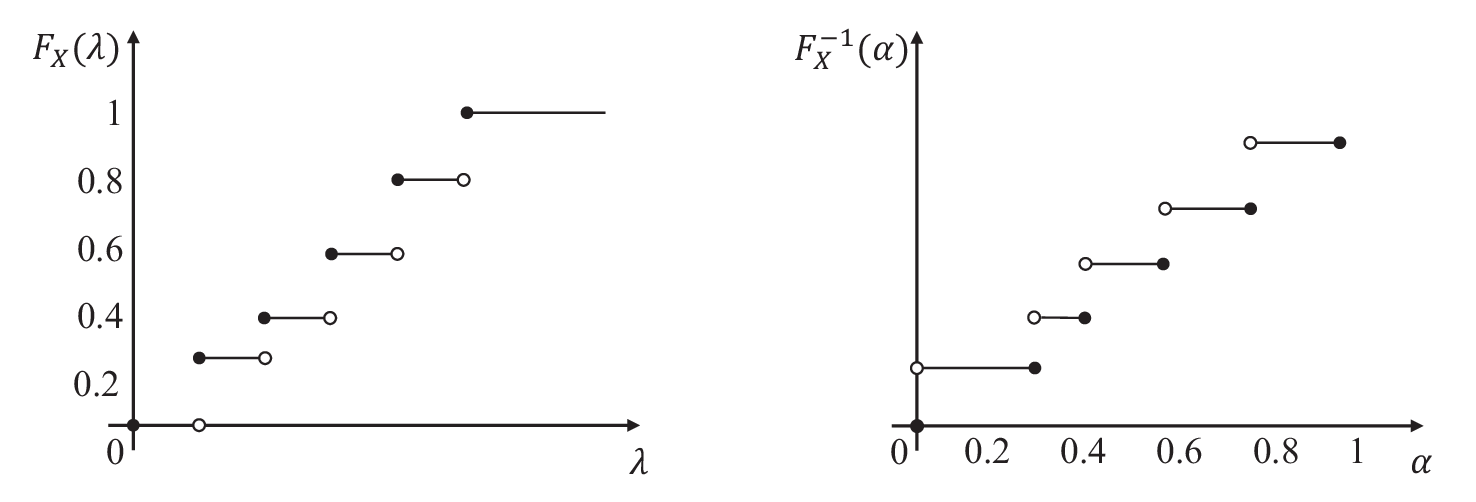}
        \caption{The right-continuous distribution function $F_X(\cdot)$ and
            the left-continuous inverse distribution function $F^{-1}_X(\cdot)$
            of a discrete random variable $X$.} \label{fig_leftcontinuousF}
     \end{figure}

For a discrete random variable $Y$ with support on $\mathcal Y$ and
a real number $\lambda \in \mathbb R$, we define
$\lambda_{-}(\mathcal Y)$ as $\lambda$'s \emph{left predecessor} in
$\mathcal Y$, i.e.,
        \begin{equation}\label{equ:pred}
        \begin{array}{lc}
            \lambda_{-}(\mathcal Y):=     \left\{
            \begin{array}{lr}
                \max\limits_{y \in \mathcal Y}\{y: y < \lambda\}, &~\text{if}~ \lambda >  \min \mathcal Y,\\
                \quad\quad\quad \lambda-1, ~\quad\quad\quad &~\text{if}~ \lambda \leq \min \mathcal Y.
            \end{array}
            \right.
        \end{array}
     \end{equation}
    Then, it holds that $\mathbb P(Y \le \lambda_{-}(\mathcal Y)) = \mathbb P(Y < \lambda)$.
  We derive the following lemma for VaR minimization of discrete random variables.

 \begin{lem}\label{lem_VaRmin}{\rm(\textbf{Duality Relation for VaR Minimization})}
    For two discrete random variables $X$ and $Y$, let $\lambda =
    \VaR_{\alpha}(X)$, $\mathcal Y$ be the support of $Y$, and $\lambda_{-}(\mathcal Y)$ be the left predecessor of $\lambda$ in $\mathcal Y$, we have
    \begin{eqnarray}
        \mathbbm P(Y \le \lambda_{-}(\mathcal Y)) \geq \alpha &\Longleftrightarrow&
        \VaR_{\alpha}(Y) < \VaR_{\alpha}(X), \label{eq_VaRmin-impr}\\
        \mathbbm P(Y \le \lambda_{-}(\mathcal Y)) < \alpha &\Longleftrightarrow&
        \VaR_{\alpha}(Y) \geq \VaR_{\alpha}(X).  \label{eq_VaRmin-optim}
    \end{eqnarray}
\end{lem}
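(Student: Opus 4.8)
The plan is to collapse the two equivalences into one by first using the supplied identity $\mathbb{P}(Y \le \lambda_{-}(\mathcal{Y})) = \mathbb{P}(Y < \lambda)$ (valid because $Y$ is discrete with support $\mathcal{Y}$) to replace the event $\{\mathbb{P}(Y \le \lambda_{-}(\mathcal{Y})) \ge \alpha\}$ by $\{\mathbb{P}(Y < \lambda) \ge \alpha\}$, where $\lambda = \VaR_{\alpha}(X)$. Since \eqref{eq_VaRmin-optim} is exactly the logical negation of \eqref{eq_VaRmin-impr} on both sides, it then suffices to establish the single equivalence $\mathbb{P}(Y < \lambda) \ge \alpha \Longleftrightarrow \VaR_{\alpha}(Y) < \lambda$, and I would take $\alpha \in (0,1]$ as in the definition \eqref{eq_var-def}.

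For the ``$\Rightarrow$'' direction I would mimic the argument of Lemma~\ref{lem_VaRmax}: from $\mathbb{P}(Y < \lambda) \ge \alpha$ and the identity above we get $F_Y(\lambda_{-}(\mathcal{Y})) \ge \alpha$, so $\VaR_{\alpha}(Y) \le \lambda_{-}(\mathcal{Y})$ directly from \eqref{eq_var-def}; and $\lambda_{-}(\mathcal{Y}) < \lambda$ holds in both branches of \eqref{equ:pred} --- it is a support point strictly below $\lambda$ when $\lambda > \min \mathcal{Y}$, and equals $\lambda - 1$ otherwise --- whence $\VaR_{\alpha}(Y) < \lambda = \VaR_{\alpha}(X)$. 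For the ``$\Leftarrow$'' direction, assume $\VaR_{\alpha}(Y) < \lambda$. The two ingredients I would use are the property $F_Y(\VaR_{\alpha}(Y)) \ge \alpha$ recorded just before Lemma~\ref{lem_VaRmax}, and the structural fact that for a discrete $Y$ and $\alpha \in (0,1]$ the quantile $\VaR_{\alpha}(Y)$ is attained at a jump point of $F_Y$, i.e.\ $\VaR_{\alpha}(Y) \in \mathcal{Y}$; moreover $\VaR_{\alpha}(Y) \ge \min \mathcal{Y}$, so $\VaR_{\alpha}(Y) < \lambda$ forces $\lambda > \min \mathcal{Y}$ and hence $\lambda_{-}(\mathcal{Y}) = \max_{y \in \mathcal{Y}}\{y : y < \lambda\}$. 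Because $\VaR_{\alpha}(Y) \in \mathcal{Y}$ and $\VaR_{\alpha}(Y) < \lambda$, it follows that $\VaR_{\alpha}(Y) \le \lambda_{-}(\mathcal{Y})$, and then monotonicity of $F_Y$ gives $\mathbb{P}(Y \le \lambda_{-}(\mathcal{Y})) = F_Y(\lambda_{-}(\mathcal{Y})) \ge F_Y(\VaR_{\alpha}(Y)) \ge \alpha$, which closes the equivalence. Finally, \eqref{eq_VaRmin-optim} follows by negating both sides of \eqref{eq_VaRmin-impr}.

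The main obstacle is the ``$\Leftarrow$'' direction, specifically the claim that $\VaR_{\alpha}(Y)$ lands in the support $\mathcal{Y}$ so that it can legitimately be compared with the left predecessor $\lambda_{-}(\mathcal{Y})$. This is precisely where discreteness and the left-continuity of $F^{-1}_Y$ (cf.\ Figure~\ref{fig_leftcontinuousF}) enter, and it has no analogue in the continuous case; I would justify it by noting that $F_Y$ is a right-continuous step function, so the infimum in \eqref{eq_var-def} cannot be achieved on a flat stretch of $F_Y$ unless that stretch extends to $-\infty$, which is impossible for $\alpha > 0$ since $F_Y(\mu) \to 0$ as $\mu \to -\infty$. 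Once this structural fact is in hand, the remaining steps are routine monotonicity bookkeeping.
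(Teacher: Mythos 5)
Your proposal is correct and follows essentially the same route as the paper's proof: the forward direction uses the definition of VaR together with $\lambda_{-}(\mathcal Y) < \lambda$, and the backward direction hinges on the same key observation that $\VaR_{\alpha}(Y) \in \mathcal Y$ forces $\VaR_{\alpha}(Y) \le \lambda_{-}(\mathcal Y)$, followed by monotonicity of $F_Y$. You are somewhat more careful than the paper in justifying that $\VaR_{\alpha}(Y)$ lies in the support and in handling the $\lambda \le \min\mathcal Y$ branch, but the underlying argument is the same.
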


\begin{proof}
    Obviously, the two statements (\ref{eq_VaRmin-impr}) and
    (\ref{eq_VaRmin-optim}) are equivalent. In what follows, we prove
    (\ref{eq_VaRmin-impr}).
    For the ``$\Rightarrow$'' direction in (\ref{eq_VaRmin-impr}), if
    $\mathbbm P(Y \le \lambda_{-}(\mathcal Y)) \geq \alpha$, we directly have
    $\VaR_{\alpha}(Y) \le \lambda_{-}(\mathcal Y) < \lambda = \VaR_{\alpha}(X)$
    by the definition of VaR in (\ref{eq_var-def}) and the definition of left predecessor $\lambda_{-}(\mathcal Y)$ in (\ref{equ:pred}). For the ``$\Leftarrow$''
    direction in (\ref{eq_VaRmin-impr}), if $\VaR_{\alpha}(Y) <
    \VaR_{\alpha}(X) = \lambda$,  since $\VaR_{\alpha}(Y) \in \mathcal Y$,
    we must have $\VaR_{\alpha}(Y) \le \lambda_{-}(\mathcal Y)$ by the definition of left predecessor $\lambda_{-}(\mathcal Y)$ in (\ref{equ:pred}), then the definition of VaR in
    (\ref{eq_var-def}) implies that $\mathbbm P(Y \le \lambda_{-}(\mathcal Y))
    \geq \alpha$. Therefore, (\ref{eq_VaRmin-impr}) holds and thus
    (\ref{eq_VaRmin-optim}) also holds.
\end{proof}

     \begin{remark}
        Lemmas~\ref{lem_VaRmax} and \ref{lem_VaRmin} present the important
        \emph{duality relation} between probability functions and VaR
        functions, which can guide the VaR optimization of random variables.
        (\ref{eq_VaRmax-impr}) and (\ref{eq_VaRmin-impr}) can be used to
        find a strictly improved solution for the VaR maximization and
        minimization problem, respectively. (\ref{eq_VaRmax-optim}) and
        (\ref{eq_VaRmin-optim}) can be used as the optimality guarantee when
        the above strictly improving procedure cannot be executed anymore.
        Thus, the VaR maximization (minimization) problem can be converted
        to the probability minimization (maximization) problem. The
        optimization problems of VaR and probability criteria have close
        relations, which is used to study the VaR optimization of MDPs in
        the later sections.
     \end{remark}


     In what follows, we omit the subscript $\alpha$ in the VaR
     formulation for notational simplicity, as $\alpha$ is fixed unless
     otherwise declared.

     \subsection{Markov Decision Processes}
     A discrete-time MDP is denoted by a tuple $\langle \mathcal S,
     \mathcal A, (\mathcal{A}(s),  s \in \mathcal{S}), P, r \rangle$,
     where $\mathcal S$ and $\mathcal A$ represent the finite spaces of
     states and actions, respectively; $\mathcal{A}(s)$ denotes the
     admissible action set in state $s \in \mathcal{S}$ with $\cup_{s \in
     \mathcal{S}}\mathcal{A}(s)=\mathcal{A}$; we denote by
     $\mathcal{K}:=\left\{(s,a): s \in \mathcal{S}, a \in
     \mathcal{A}(s)\right\}$ the set of admissible state-action pairs for
     simplicity; $P$ denotes the transition probability function with
     $P(\cdot|s,a) \in \mathcal{P}(\mathcal S)$ for each given $(s,a) \in
     \mathcal K$, where $\mathcal{P}(\cdot)$ represents the space of
     probability distributions on a set; and $r : \mathcal K \rightarrow
     \mathbb{R}$ is the reward function, where $r(s,a)$ denotes the reward determined by
     the current state-action pair $(s,a) \in \mathcal{K}$.
     Suppose the system state is $s_t \in \mathcal S$ at the current time
     $t$ and an action $a_t \in \mathcal{A}(s_t)$ is adopted, the system
     will receive an instantaneous reward $r(s_t,a_t)$ and move to a new
     state $s_{t+1} \in \mathcal S$ at the next time $t+1$ according to
     the transition probability $P(s_{t+1}|s_t,a_t)$. It is worth noting
     that we may further generalize the reward function as
     $r(s_t,a_t,s_{t+1})$ or even $r(s_t,a_t,s_{t+1})+\xi$, where $\xi$
     is a zero-mean uncontrollable random variable.

     We denote the policy as $u := (u_t; ~ t \ge 0)$, i.e., the
     collection of action-selection rules at each decision time epoch.
     Specifically, we call $u$ a \emph{randomized history-dependent
     policy}  if $u_t: \mathcal{K}^t \times \mathcal{S} \rightarrow
     \mathcal P(\mathcal A)$, where $\sum_{a \in \mathcal
     A(s_t)}u_t(a|h_t)=1$ for each history $h_t :=
     (s_0,a_0,\ldots,s_{t-1},a_{t-1},s_t) \in
     \mathcal{H}_t:=\mathcal{K}^t \times \mathcal{S}$. Further,  $u$
     degenerates into a \emph{randomized Markov policy} if $u_t$ depends
     on the current state $s_t$ instead of history $h_t$, i.e.,
     $u_t(\cdot|h_t)=u_t(\cdot|s_t)$, $\forall h_t \in \mathcal{H}_t$. In
     addition, if $u_t$ is a deterministic decision rule, i.e., $u_t:
     \mathcal{H}_t \rightarrow \mathcal{A}$ or $u_t: \mathcal{S}
     \rightarrow \mathcal{A}$, we call $u$ a \emph{deterministic
     history-dependent policy} or \emph{deterministic Markov policy},
     respectively. If $u_t$ is independent of the decision time $t$,
     i.e., there exists $u:\mathcal S \rightarrow \mathcal P(\mathcal A)$
     (or  $u:\mathcal S \rightarrow \mathcal A$) such that
     $u(\cdot|s_t)=u_t(\cdot|s_t), \forall t \ge 0$, we call $u$ a
     \emph{randomized stationary policy} (or a \emph{deterministic
     stationary policy}). For notational simplicity, we denote by
     $\mathcal U^{\rm RH}$, $\mathcal U^{\rm RM}$, $\mathcal U^{\rm RS}$,
     $\mathcal U^{\rm DH}$, $\mathcal U^{\rm DM}$ and $\mathcal U^{\rm
     DS}$ the sets of all randomized history-dependent policies,
     randomized Markov policies, randomized stationary policies,
     deterministic history-dependent policies, deterministic Markov
     policies and deterministic stationary policies, respectively.

     For each initial state $s_0 \in \mathcal S$ and policy $u \in
     \mathcal U^{\rm RH}$, by the Theorem of C. Ionescu-Tulcea \citep{Hernandez-Lerma1996}, there
     exists a unique probability space $(\Omega,\mathcal F,
     \mathbbm{P}_{s_0}^{u})$, where $\Omega:=(\mathcal{S} \times
     \mathcal{A})^\infty$ is the sample space, $\mathcal F$ denotes the
     corresponding product $\sigma$-algebra and the probability measure
     is
     \begin{equation*}
     \mathbbm{P}_{s_0}^{u}(\omega)=u_0(a_0|s_0)P(s_1|s_0,a_0)u_1(a_1|s_0,a_0,s_1)\cdots,
     \quad\forall \omega=(s_0,a_0,s_1,a_1,\ldots) \in \Omega.
     \end{equation*}
     We denote by $\mathbbm{E}_{s_0}^{u}$ the expectation operator with
     respect to $\mathbbm{P}_{s_0}^{u}$, and define a reward process
     $(R_t; ~ t \ge 0)$ on $(\Omega,\mathcal F, \mathbbm{P}_{s_0}^{u})$
     such that
     \begin{equation*}
     R_t(\omega)=r(s_t,a_t), \quad\forall \omega=(s_0,a_0,s_1,a_1,\ldots)
     \in \Omega.
     \end{equation*}

     \subsection{VaR Criterion in MDPs}

     This paper aims to study the VaR criteria in discrete-time MDPs with
     infinite-horizon and finite-horizon, respectively, which are
     described as problem 1 (P1) and problem 2 (P2) below:
     \begin{itemize}
     \item[P1.] \textbf{Infinite-horizon model:} we focus on maximizing the VaR of instantaneous rewards when the system tends to be steady over an infinite horizon, say \emph{steady-state VaR MDPs}.
     \item[P2.] \textbf{Finite-horizon model:} we focus on maximizing the VaR of accumulated rewards over a finite horizon, say \emph{finite-horizon VaR MDPs}.
     \end{itemize}
     Next, we give rigorous definitions of the steady-state and
     finite-horizon VaR MDPs, respectively. In order to conveniently
     define the steady-state metrics of MDPs, we restrict our attention
     to randomized stationary policy space $\mathcal U^{\rm RS}$ and make
     the following assumption on ergodicity for steady-state VaR MDPs.
     \begin{assumption}
     The Markov chain under any stationary policy $u \in \mathcal U^{\rm
        RS}$ is assumed to be ergodic (i.e., irreducible and aperiodic).
        \end{assumption}

        The ergodicity assumption indicates that both the Markov chain
        $((s_t,a_t); ~ t \ge 0)$ and the reward process $(R_t; ~ t \ge 0)$
        tend to be steady as $t\rightarrow \infty$. That is, the limiting
        distribution of the discrete-time MDP under policy $u\in \mathcal
        U^{\rm RS}$ is well defined,
        \begin{equation}\label{equ:steady}
     \pi^u(s,a) := \lim\limits_{t \rightarrow \infty}
     \mathbbm{P}^{u}_{s_0}(s_t=s,a_t=a), \qquad s_0 \in \mathcal S, ~
     (s,a) \in \mathcal K.
     \end{equation}
     It is obvious that the limiting (also steady-state) distribution
     $\pi^u(s,a)$ is independent of initial state $s_0$. The steady-state
     reward under a stationary policy $u$ is denoted as $R^u_{\infty}$,
     which is a random variable with support on $\left\{r(s,a): ~(s,a)
     \in \mathcal K \right\}$ and corresponding probability distribution
     $\pi^u$. The definition of the steady-state VaR is given as follows.
     \begin{definition}\label{equ:ss-Varmdp}
     Given a policy $u \in \mathcal U^{\rm RS}$, we define the
     steady-state VaR at probability level $\alpha \in (0,1]$ as
     \begin{equation}\label{equ:svar}
        \VaR^u:=\inf\left\{\lambda \in \mathbb R: \mathbbm P(R^u_{\infty}
        \le \lambda) \ge \alpha\right\}.
     \end{equation}
     \end{definition}

     The steady-state VaR MDP aims to maximize $\VaR^u$ among randomized
     stationary policy space, i.e.,
     \begin{flalign}\label{equ:svarmdp}
     &\mathcal M_1: \hspace{5cm} \VaR^*:= \sup\limits_{u \in \mathcal
        U^{\rm RS}}   \VaR^u, &
        \end{flalign}
        where $\VaR^*$ is called the \emph{optimal value} of the
        steady-state VaR MDP. A policy $u^* \in \mathcal U^{\rm RS}$ is
        called an \emph{optimal policy} of the steady-state VaR MDP if
        $\VaR^{u^*} = \VaR^*$.

        As a counterpart concept, we give the definition of the
        finite-horizon VaR in MDPs as follows.
        \begin{definition}\label{equ:fh-Varmdp}
     For each initial state $s_0 \in \mathcal S$ and policy $u \in
     \mathcal U^{\rm RH}$, we define the finite-horizon VaR at
     probability level $\alpha \in (0,1]$ as
     \begin{equation}\label{equ:tvar}
        \VaR^u(s_0):=\inf\left\{\lambda \in \mathbb R: \mathbbm P_{s_0}^u
        (R_{0:T} \le \lambda) \ge \alpha\right\},
     \end{equation}
     where $R_{0:T}:= \sum\limits_{t=0}^{T} R_t$ denotes the accumulated
     rewards from time $0$ to $T$, and the terminal reward $R_T$ is
     assumed to be $0$ without loss of generality.
     \end{definition}
     The finite-horizon VaR MDP aims to maximize $\VaR^u(s_0)$ among
     randomized history-dependent policy space  for each initial state
     $s_0$, i.e.,
     \begin{flalign}\label{equ:tvarmdp}
     &\mathcal M_2: \hspace{4cm} \VaR^*(s_0) := \sup\limits_{u \in
        \mathcal U^{\rm RH}}   \VaR^u(s_0), \qquad s_0 \in \mathcal{S},&
        \end{flalign}
        where $\VaR^*(\cdot)$ is called the \emph{optimal value function} of
        the finite-horizon VaR MDP. A policy $u^* \in \mathcal U^{\rm RH}$
        is \emph{optimal} for the finite-horizon VaR MDP if
        \begin{equation}\label{equ:optu_def}
     \VaR^{u^*}(s_0) = \VaR^*(s_0), \qquad\forall s_0 \in \mathcal{S}.
     \end{equation}

     Note that the steady-state VaR MDP  (\ref{equ:svarmdp}) and the
     finite-horizon VaR MDP (\ref{equ:tvarmdp}) cannot be solved by
     directly using the method of dynamic programming. It is because the
     VaR metric is non-additive and non-Markovian, which makes the VaR
     optimization problem does not fit the standard model of MDPs. This
     fundamentally challenging problem attracts a lot of research
     attention across various disciplines. However, as discussed in
     Section~\ref{sec:intro}, it has not yet been fully resolved in the
     literature. In this paper, we aim to propose a new optimization
     approach to effectively tackle this challenge.

     \section{Maximization of Steady-State VaR MDPs}\label{sec:svarmdp}
     In this section, we solve the steady-state VaR MDP $\mathcal M_1$
     and propose a policy iteration type algorithm to find an optimal
     stationary policy which can attain the maximum VaR of steady-state rewards.
     It is worth noting that $R^u_{\infty}$ is a discrete random variable
     with support on $\Lambda:=\left\{r(s,a):(s,a) \in \mathcal
     K\right\}$. We can simplify the definition of steady-state VaR in
     (\ref{equ:svar}) as below.
     \begin{equation}\label{equ:new_svar}
     \VaR^u=\min\left\{\lambda \in \Lambda: \mathbbm P(R^u_{\infty} \le \lambda) \ge \alpha\right\}, \qquad  u \in \mathcal{U}^{\rm RS}.
     \end{equation}
     Thus, the steady-state VaR MDP $\mathcal M_1$ in (\ref{equ:svarmdp})
     can be rewritten as the following form of \emph{bilevel
     optimization}, where the inner has a constraint of probability that
     is larger than the confidence level $\alpha$.
     \begin{equation}\nonumber
     \VaR^* =\sup\limits_{u \in \mathcal U^{\rm RS}}
     \min\limits_{\lambda\in \Lambda}\left\{\lambda: \mathbbm
     P(R^u_{\infty} \le \lambda) \ge \alpha\right\}.
     \end{equation}
     Although the above problem has a form like max-min, it is not a
     saddle point problem because of the probability constraint. We have
     difficulties to directly solve it. By taking $Y=R^u_{\infty}$ in
     (\ref{eq_VaRmax-impr}) of Lemma~\ref{lem_VaRmax}, we can obtain the
     relation between steady-state VaR and steady-state probability,
     i.e.,
     \begin{equation}\label{equ:relation_steady}
     \mathbbm P(R^u_{\infty} \le \lambda)<\alpha \Longleftrightarrow
     \VaR^u>\lambda, \qquad u \in \mathcal U^{\rm RS}, ~ \lambda \in
     \mathbb R.
     \end{equation}
     The above relation motivates us to develop a policy improvement
     approach to maximize VaR. Let $\lambda$ be the value of
     steady-state VaR under an initial policy $u^{(0)}$, i.e., $\lambda =
     \VaR^{u^{(0)}}$. We can solve a probabilistic minimization MDP to
     obtain a policy $ u \in \argmin\limits_{ u\in \mathcal U^{\rm RS}}
     \mathbbm P(R^u_{\infty} \le \lambda)$. According to
     (\ref{equ:relation_steady}), if $u$ satisfies $\mathbbm
     P(R^u_{\infty} \le \lambda)  < \alpha$, then $u$ is an improved
     policy such that $\VaR^u > \lambda$. Otherwise, it holds that
     $\min\limits_{u \in \mathcal U^{\rm RS}} \mathbbm P(R^u_{\infty} \le
     \lambda)\ge \alpha$, with (\ref{eq_VaRmax-optim}), which indicates
     that there is no feasible policy that has a greater VaR value than
     $\lambda$. Therefore, the maximization of steady-state VaR in MDPs
     can be achieved by solving a series of probabilistic minimization MDPs.

     To rigorously formulate the above optimization approach, we define
     an auxiliary infinite-horizon probabilistic minimization MDP as
     follows. For any $\lambda \in \mathbb R$, the probability criterion
     under policy $u \in \mathcal U^{\rm RS}$ is defined as
     \begin{equation*}
     F^u(\lambda) := \mathbbm P(R^u_{\infty} \le \lambda).
     \end{equation*}
     The objective is to minimize the probability among randomized stationary policy space, i.e.,
     \begin{flalign}\label{equ:promdp_s}
     &\hat{\mathcal M}_1(\lambda): \hspace{5cm} F^*(\lambda):=
     \inf\limits_{u \in \mathcal U^{\rm RS}}   F^u(\lambda),&
     \end{flalign}
     where $F^*(\lambda)$ is called the optimal value function of the
     infinite-horizon probabilistic minimization MDP. A policy $ u^* \in
     \mathcal U^{\rm RS}$ is optimal for $\hat{\mathcal M}_1(\lambda)$ if
     $F^{ u^*}(\lambda) = F^*(\lambda)$.

     It is worth noting that $\hat{\mathcal M}_1(\lambda)$ is a standard
     model of long-run average MDP with a suitable reward function
     defined as $r_\lambda(s,a):=\mathbb{I}\left\{r(s,a) \le
     \lambda\right\}$, where $\mathbb{I}$ denotes an indicator function.
     With the classical theory of average MDPs, the optimal value of
     $\hat{\mathcal M}_1(\lambda)$ can be found in deterministic
     stationary policy space $\mathcal U^{\rm DS}$, and we can use the
     classical policy iteration or value iteration to solve the
     probabilistic minimization MDP $\hat{\mathcal M}_1(\lambda)$. In
     what follows, we replace inf and $\mathcal U^{\rm RS}$ in
     (\ref{equ:promdp_s}) with min and $\mathcal U^{\rm DS}$,
     respectively. We establish the optimization equivalence between the
     steady-state VaR MDP $\mathcal M_1$ and the probabilistic
     minimization MDP $\hat{\mathcal M}_1(\lambda)$, as stated by
     Theorem~\ref{thm:equiva_s}.

      \begin{thm}\label{thm:equiva_s}
        {\rm (\textbf{Optimization Equivalence between $\mathcal M_1$ and
                $\hat{\mathcal M}_1(\lambda)$})}

        \begin{itemize}
            \item[(a)] Define
            \begin{equation}\label{equ:opt_value_s}
                \lambda^* :=  \min\limits_{\lambda\in \Lambda}\left\{\lambda:
                \min\limits_{u \in \mathcal U^{\rm DS}}F^u(\lambda)\ge
                \alpha\right\},
            \end{equation}
            then the optimal value of the steady-state VaR maximization MDP $\mathcal M_1$ in (\ref{equ:svarmdp}) equals $\lambda^*$, i.e., $\VaR^* =\lambda^*$.
            \item[(b)] Let $\lambda^*_{-}(\Lambda)$ be the left predecessor of $\lambda^*$ in $\Lambda$, and suppose ${u}^* \in \mathcal U^{\rm DS}$ is an optimal policy of
            $\hat{\mathcal M}_1(\lambda^*_{-}(\Lambda))$, then $u^*$ is also optimal for
            $\mathcal M_1$.
        \end{itemize}
    \end{thm}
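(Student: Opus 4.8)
The plan is to establish parts (a) and (b) together by sandwiching $\VaR^*$ between $\lambda^*$ from above and $\VaR^{u^*}$ from below, where $u^*$ is the optimal policy of $\hat{\mathcal M}_1(\lambda^*_{-}(\Lambda))$ named in part (b). Two preliminaries are needed first. Since $R^u_{\infty}$ is supported on the finite set $\Lambda$ and $F^u(\max\Lambda)=1\ge\alpha$, the set appearing in \eqref{equ:opt_value_s} and the set appearing in \eqref{equ:new_svar} are nonempty, so $\lambda^*$ and every $\VaR^u$ are well defined and lie in $\Lambda$. Moreover, $\hat{\mathcal M}_1(\lambda)$ is a genuine long-run average MDP with bounded reward $r_\lambda(s,a)=\mathbb{I}\{r(s,a)\le\lambda\}$, and under the ergodicity assumption its long-run average reward under a stationary policy $u$ equals $\sum_{(s,a)\in\mathcal K}\pi^u(s,a)\,r_\lambda(s,a)=F^u(\lambda)$; hence the classical theory of average MDPs gives $F^*(\lambda)=\inf_{u\in\mathcal U^{\rm RS}}F^u(\lambda)=\min_{u\in\mathcal U^{\rm DS}}F^u(\lambda)$ and guarantees that an optimal deterministic stationary policy for $\hat{\mathcal M}_1(\lambda)$ exists for every $\lambda\in\mathbb R$.

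For the upper bound, take an arbitrary $u\in\mathcal U^{\rm RS}$. By the first preliminary and the definition of $\lambda^*$, $F^u(\lambda^*)\ge F^*(\lambda^*)=\min_{u'\in\mathcal U^{\rm DS}}F^{u'}(\lambda^*)\ge\alpha$, so $\lambda^*$ belongs to $\{\lambda\in\Lambda:\mathbbm P(R^u_{\infty}\le\lambda)\ge\alpha\}$, whence $\VaR^u\le\lambda^*$ by \eqref{equ:new_svar}; equivalently, this is the contrapositive of the ``$\Leftarrow$'' direction of the duality relation \eqref{equ:relation_steady} (i.e.\ of Lemma~\ref{lem_VaRmax}). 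Taking the supremum over $u\in\mathcal U^{\rm RS}$ yields $\VaR^*\le\lambda^*$.

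For the lower bound, I would use the minimality of $\lambda^*$ to argue $\min_{u\in\mathcal U^{\rm DS}}F^u(\lambda^*_{-}(\Lambda))<\alpha$ in either branch of \eqref{equ:pred}: if $\lambda^*>\min\Lambda$ this holds because $\lambda^*_{-}(\Lambda)\in\Lambda$ is strictly below $\lambda^*$ while $\lambda^*$ is the least element of $\Lambda$ satisfying the condition in \eqref{equ:opt_value_s}; if $\lambda^*=\min\Lambda$ then $\lambda^*_{-}(\Lambda)=\lambda^*-1$ lies below the support of every $R^u_{\infty}$, so $F^u(\lambda^*_{-}(\Lambda))=0<\alpha$. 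Now let $u^*\in\mathcal U^{\rm DS}$ be optimal for $\hat{\mathcal M}_1(\lambda^*_{-}(\Lambda))$; then $F^{u^*}(\lambda^*_{-}(\Lambda))=\min_{u\in\mathcal U^{\rm DS}}F^u(\lambda^*_{-}(\Lambda))<\alpha$, and the ``$\Rightarrow$'' direction of \eqref{equ:relation_steady} with $u=u^*$ and $\lambda=\lambda^*_{-}(\Lambda)$ gives $\VaR^{u^*}>\lambda^*_{-}(\Lambda)$. Since $\VaR^{u^*}\in\Lambda$ and $\lambda^*$ is the smallest element of $\Lambda$ strictly exceeding $\lambda^*_{-}(\Lambda)$, we conclude $\VaR^{u^*}\ge\lambda^*$. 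Chaining the two bounds with $u^*\in\mathcal U^{\rm DS}\subseteq\mathcal U^{\rm RS}$ yields $\lambda^*\le\VaR^{u^*}\le\VaR^*\le\lambda^*$, so all three coincide; this is exactly (a), $\VaR^*=\lambda^*$, and simultaneously $\VaR^{u^*}=\VaR^*$, which is (b).

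Each step is short once the preliminaries are in place, so I expect the main obstacle to be those preliminaries themselves: rigorously identifying $F^u(\lambda)$ with the long-run average value of the standard MDP $\hat{\mathcal M}_1(\lambda)$ — so that the infimum over $\mathcal U^{\rm RS}$ is attained within $\mathcal U^{\rm DS}$ — which leans on the ergodicity assumption, together with the discrete bookkeeping around the left predecessor, in particular the degenerate case $\lambda^*=\min\Lambda$ where $\lambda^*_{-}(\Lambda)\notin\Lambda$ and one must invoke the ``$\lambda-1$'' branch of \eqref{equ:pred}.
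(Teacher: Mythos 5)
Your proof is correct and follows essentially the same route as the paper's: sandwiching $\VaR^*$ via $F^u(\lambda^*)\ge F^*(\lambda^*)\ge\alpha$ for the upper bound and the left-predecessor argument with the duality relation (\ref{equ:relation_steady}) for the lower bound. The only cosmetic difference is that you treat the degenerate case $\lambda^*=\min\Lambda$ uniformly through the ``$\lambda-1$'' branch of (\ref{equ:pred}) (where $F^u(\lambda^*_{-}(\Lambda))=0<\alpha$), whereas the paper disposes of that case directly by noting $\VaR^{u^*}\in\Lambda$; both are valid.
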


    \begin{proof}
        To prove (a) and (b), it is sufficient to show that
        \begin{equation}\label{equ:opt_ge_s}
            \lambda^* \ge \VaR^u, \quad\forall u \in \mathcal U^{\rm RS},
        \end{equation}
        \begin{equation}\label{equ:opt_equ_s}
            \lambda^* \le \VaR^{u^*}.
        \end{equation}
        First, we prove (\ref{equ:opt_ge_s}). It follows from  (\ref{equ:promdp_s}) and the definition of $\lambda^*$ in (\ref{equ:opt_value_s}), we have
        \begin{equation*}
            F^u(\lambda^*) \ge F^*(\lambda^*)\ge\alpha, \quad\forall u \in \mathcal
            U^{\rm RS},
        \end{equation*}
        which directly implies (\ref{equ:opt_ge_s}) by the definition of
        $\VaR^{u}$ in (\ref{equ:new_svar}).

        Next, we prove (\ref{equ:opt_equ_s}) in two cases. For the case that
        $\lambda^* = \min\Lambda$, since $\VaR^{u^*}$ takes values in
        $\Lambda$, (\ref{equ:opt_equ_s}) obviously holds. For the other case
        that $\lambda^* \neq \min\Lambda$, from the definition of
        left predecessor $\lambda^*_{-}(\Lambda)$ in (\ref{equ:pred}), we have $\lambda^*_{-}(\Lambda) < \lambda^* $, and we
        can further derive
        \begin{equation}
            \nonumber
            F^{u^*}(\lambda^*_{-}(\Lambda))=\min\limits_{u \in \mathcal U^{\rm DS}}F^{u}(\lambda^*_{-}(\Lambda)) < \alpha,
        \end{equation}
where the equality and inequality follow from the
        definitions of $u^*$ and $\lambda^*$, respectively. By taking $\lambda = \lambda^*_{-}(\Lambda)$
        in (\ref{eq_VaRmax-impr}) of Lemma~\ref{lem_VaRmax}, we have
        $\VaR^{u^*} > \lambda^*_{-}(\Lambda)$. From the definition of left predecessor $\lambda^*_{-}(\Lambda)$, we know that $\lambda^*$ is the minimum
        element of $\Lambda$ that is larger than $\lambda^*_{-}(\Lambda)$. Since
        $\VaR^{u^*} > \lambda^*_{-}(\Lambda)$ and $\VaR^{u^*} \in \Lambda$, we further
        derive that $\VaR^{u^*} \ge \lambda^*$. Thus, (\ref{equ:opt_equ_s}) is
        proved in both cases.
    \end{proof}

     \begin{remark}
    (i). It is worth noting that the optimal policy $u^*$ for the
     steady-state VaR maximization MDP $\mathcal M_1$ is derived by
     solving the probabilistic minimization MDP $\hat{\mathcal
        M}_1(\lambda^*_{-}(\Lambda))$ with $\lambda^*_{-}(\Lambda)$ as its target level, rather than
     the optimal value $\VaR^*$. In most cases, the optimal policy of
     $\hat{\mathcal M}_1(\VaR^*)$ is not optimal for $\mathcal M_1$.

     (ii). Theorem~\ref{thm:equiva_s} indicates that the maximum $\VaR^*$
     can be attained by a deterministic stationary policy. Thus, we can
     rewrite $\mathcal M_1$ in (\ref{equ:svarmdp}) and $\hat{\mathcal
        M}_1(\lambda)$ in (\ref{equ:promdp_s}) as follows, respectively.
     \begin{flalign}\label{equ:svarmdp_2}
        &\mathcal M_1: \hspace{5cm} \VaR^*:= \max\limits_{u \in \mathcal
            U^{\rm DS}}   \VaR^u, &
     \end{flalign}
     \begin{flalign}\label{equ:promdp_s2}
        &\hat{\mathcal M}_1(\lambda): \hspace{4.5cm} F^*(\lambda):=
        \min\limits_{u \in \mathcal U^{\rm DS}}   F^u(\lambda),&
     \end{flalign}
     whose search space can be limited in a finite space $\mathcal
     U^{\rm DS}$.
     \end{remark}

     Theorem~\ref{thm:equiva_s} implies that we can convert the
     steady-state VaR maximization MDP $\mathcal M_1$ to a \emph{bilevel
     optimization problem} with constraints as follows.
     \begin{flalign}\label{equ:bileve_s}
     &\mathcal M_1: \hspace{1.5cm} \VaR^* = \max\limits_{u \in \mathcal
        U^{\rm DS}} \min\limits_{\lambda\in \Lambda}\big\{\lambda:
     F^u(\lambda) \ge \alpha \big\} = \min\limits_{\lambda\in
        \Lambda}\left\{\lambda: \min\limits_{u \in \mathcal U^{\rm
            DS}}F^u(\lambda)\ge \alpha\right\},&
            \end{flalign}
            where the inner level is a policy optimization problem of
            probabilistic minimization MDPs whose optimal value should satisfy
            the constraint of confidence level $\alpha$, and the outer level is
            a single parameter optimization problem with variable $\lambda \in
            \Lambda$. Based on Theorem~\ref{thm:equiva_s}, we can directly
            derive a brute-force approach of solving the bilevel optimization
            problem (\ref{equ:bileve_s}): We can enumeratively solve the
            probabilistic minimization MDP $\hat{\mathcal M}_1(\lambda)$ in
            (\ref{equ:promdp_s2}) for every $\lambda \in \Lambda$ such that we
            can obtain $\VaR^*$ and $u^*$, as stated by
            Theorem~\ref{thm:equiva_s}. However, this naive approach is
            computationally intractable because it requires solving $|\Lambda|$ individual probabilistic
            minimization MDPs $\hat{\mathcal M}_1(\lambda)$.

            To improve the computational efficiency, we further study the
            optimality structure of this VaR maximization problem $\mathcal M_1$
            in (\ref{equ:svarmdp_2}) or (\ref{equ:bileve_s}), and develop a
            policy iteration algorithm to efficiently find optimal policies.
            With (\ref{equ:relation_steady}) and the definition of
            $\hat{\mathcal M}_1(\lambda)$ in (\ref{equ:promdp_s2}), we propose
            the following policy improvement rule and the optimality condition
            of the steady-state VaR MDP $\mathcal M_1$.

            \begin{thm} \label{thm:imp_aver}
     {\rm (\textbf{Policy Improvement Rule for Steady-State VaR Maximization MDPs})}

     Given a policy $u \in \mathcal U^{\rm {DS}}$ and let
     $\lambda=\VaR^u$, if there exists another policy $u' \in \mathcal
     U^{\rm {DS}}$ such that $F^{u'}(\lambda)<\alpha$, then we have
     $\VaR^{u'}>\VaR^u$, i.e., $u'$ is a strictly improved policy.
     \end{thm}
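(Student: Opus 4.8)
The plan is to derive Theorem~\ref{thm:imp_aver} as an immediate corollary of the duality relation already established in Lemma~\ref{lem_VaRmax}, applied to the steady-state reward random variables of the two policies. First I would set $X := R^u_{\infty}$, so that by Definition~\ref{equ:ss-Varmdp} and the reformulation (\ref{equ:new_svar}) we have $\VaR_{\alpha}(X) = \VaR^u = \lambda$. Then I would set $Y := R^{u'}_{\infty}$, the steady-state reward under the candidate policy $u'$, noting that under Assumption~1 this random variable is well-defined with distribution $\pi^{u'}$ and support contained in $\Lambda$. By definition, $F^{u'}(\lambda) = \mathbbm P(R^{u'}_{\infty} \le \lambda) = \mathbbm P(Y \le \lambda)$.

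With these identifications, the hypothesis $F^{u'}(\lambda) < \alpha$ is exactly the left-hand side of the implication (\ref{eq_VaRmax-impr}) in Lemma~\ref{lem_VaRmax}, namely $\mathbbm P(Y \le \lambda) < \alpha$ with $\lambda = \VaR_{\alpha}(X)$. Invoking the ``$\Rightarrow$'' direction of (\ref{eq_VaRmax-impr}) then yields $\VaR_{\alpha}(Y) > \VaR_{\alpha}(X)$, which in the present notation reads $\VaR^{u'} > \VaR^u$. This is precisely the claimed strict improvement, so the proof is essentially a two-line application of the lemma once the correct substitution is made.

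The only subtlety — and the step I would be most careful about — is making sure the hypotheses of Lemma~\ref{lem_VaRmax} are genuinely met, in particular that $\lambda = \VaR^u$ is indeed equal to $\VaR_{\alpha}(X)$ for $X = R^u_{\infty}$ (which holds by (\ref{equ:svar}) and (\ref{equ:new_svar})), and that both $R^u_{\infty}$ and $R^{u'}_{\infty}$ are legitimate random variables, which is guaranteed by the ergodicity Assumption~1 over the stationary policy class $\mathcal U^{\rm DS} \subseteq \mathcal U^{\rm RS}$. Note also that Lemma~\ref{lem_VaRmax} does not require $X$ and $Y$ to be defined on a common probability space, so there is no obstruction from the fact that $u$ and $u'$ induce different steady-state distributions. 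Hence I do not anticipate any real obstacle; the content of the theorem is entirely carried by the duality relation, and the proof amounts to recognizing that $F^{u'}(\lambda) < \alpha$ is the translation of the improvement condition (\ref{eq_VaRmax-impr}) into the MDP setting.
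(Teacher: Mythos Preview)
Your proposal is correct and matches the paper's own proof: the paper simply states that the result follows directly from (\ref{equ:relation_steady}), which is itself obtained by taking $Y = R^{u}_\infty$ in (\ref{eq_VaRmax-impr}) of Lemma~\ref{lem_VaRmax}. Your explicit substitution $X = R^{u}_\infty$, $Y = R^{u'}_\infty$ and invocation of the ``$\Rightarrow$'' direction of (\ref{eq_VaRmax-impr}) is exactly this argument spelled out in full.
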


     \begin{proof}
     The result follows directly  from (\ref{equ:relation_steady}).
     \end{proof}

     Theorem~\ref{thm:imp_aver} provides an efficient way to improve the
     VaR value based on an existing policy. For any given policy $u \in
     \mathcal U^{\rm DS}$, we can solve a probabilistic MDP
     $\hat{\mathcal M}_1(\VaR^u)$ in (\ref{equ:promdp_s2}) and its
     optimal policy is denoted by $u'$. If its optimal value $F^*(\VaR^u)
     < \alpha$, then $\VaR^{u'} > \VaR^u$ and we update the new policy as
     $u'$ and repeat this procedure until the above condition
     $F^*(\VaR^u) < \alpha$ does not hold anymore. That is, $F^*(\VaR^u)
     \geq \alpha$ indicates that the current policy $u$ is optimal, which
     is guaranteed by the following theorem about a
     \emph{necessary-sufficient condition} of optimal policies.

     \begin{thm}\label{thm:opt_aver}
     {\rm (\textbf{Optimality Condition for Steady-State VaR Maximization MDPs})}

     A policy $u^* \in \mathcal U^{\rm {DS}}$ is optimal for the
     steady-state VaR MDP $\mathcal M_1$ in (\ref{equ:svarmdp_2}) if and
     only if it holds that $F^*(\VaR^{u^*})\ge\alpha$.
     \end{thm}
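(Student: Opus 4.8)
The plan is to prove the two implications separately, using the duality relation (\ref{equ:relation_steady}) --- equivalently Lemma~\ref{lem_VaRmax} --- together with the policy improvement rule of Theorem~\ref{thm:imp_aver}, and invoking Theorem~\ref{thm:equiva_s} only to reduce the policy class to $\mathcal U^{\rm DS}$. No new machinery is needed beyond what has been set up.

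For the necessity direction ($\Rightarrow$), I argue by contraposition. Suppose $F^*(\VaR^{u^*}) < \alpha$. Since $F^*(\lambda) = \min_{u \in \mathcal U^{\rm DS}} F^u(\lambda)$ by (\ref{equ:promdp_s2}) and this minimum is attained (because $\hat{\mathcal M}_1(\lambda)$ is a standard finite long-run average MDP), there exists $u' \in \mathcal U^{\rm DS}$ with $F^{u'}(\VaR^{u^*}) < \alpha$. Applying Theorem~\ref{thm:imp_aver} with $u = u^*$ and $\lambda = \VaR^{u^*}$ (or directly (\ref{eq_VaRmax-impr}) of Lemma~\ref{lem_VaRmax} with $X = R^{u^*}_{\infty}$ and $Y = R^{u'}_{\infty}$) yields $\VaR^{u'} > \VaR^{u^*}$, so $u^*$ is not optimal. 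Hence optimality of $u^*$ forces $F^*(\VaR^{u^*}) \ge \alpha$.

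For the sufficiency direction ($\Leftarrow$), assume $F^*(\VaR^{u^*}) \ge \alpha$. Then for every $u \in \mathcal U^{\rm DS}$ we have $F^u(\VaR^{u^*}) \ge F^*(\VaR^{u^*}) \ge \alpha$, i.e. $\mathbbm P(R^u_{\infty} \le \VaR^{u^*}) \ge \alpha$. Applying (\ref{eq_VaRmax-optim}) of Lemma~\ref{lem_VaRmax} with $X = R^{u^*}_{\infty}$, so that $\lambda = \VaR_{\alpha}(X) = \VaR^{u^*}$, and $Y = R^u_{\infty}$, we obtain $\VaR^u \le \VaR^{u^*}$ for all $u \in \mathcal U^{\rm DS}$. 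Since Theorem~\ref{thm:equiva_s} guarantees that the optimum in $\mathcal M_1$ is attained over $\mathcal U^{\rm DS}$, i.e. $\VaR^* = \max_{u \in \mathcal U^{\rm DS}} \VaR^u$ as in (\ref{equ:svarmdp_2}), it follows that $\VaR^{u^*} = \VaR^*$, so $u^*$ is optimal.

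The argument is short, and the only point that requires care is the scope of the policy class: the sufficiency direction compares $u^*$ against deterministic stationary policies only, so it leans on the reduction $\VaR^* = \max_{u \in \mathcal U^{\rm DS}} \VaR^u$ from Theorem~\ref{thm:equiva_s}(b). One can instead bypass this by taking an arbitrary $u \in \mathcal U^{\rm RS}$ and using $F^u(\VaR^{u^*}) \ge \inf_{u \in \mathcal U^{\rm RS}} F^u(\VaR^{u^*}) = F^*(\VaR^{u^*}) \ge \alpha$ (valid because $\hat{\mathcal M}_1(\lambda)$ is a standard average MDP whose value over $\mathcal U^{\rm RS}$ coincides with that over $\mathcal U^{\rm DS}$) before applying (\ref{eq_VaRmax-optim}). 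Either way I do not expect a genuine obstacle beyond bookkeeping of which policy set the optimum ranges over.
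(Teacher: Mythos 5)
Your proof is correct and follows essentially the same route as the paper: both directions rest on the duality relations (\ref{eq_VaRmax-impr})--(\ref{eq_VaRmax-optim}) of Lemma~\ref{lem_VaRmax} applied with $X=R^{u^*}_{\infty}$ and $Y=R^{u}_{\infty}$, your contrapositive for necessity being the mirror image of the paper's direct argument (the paper proves necessity directly and leaves sufficiency as ``similar''). Your remark on the policy class is a fair point of care, and your resolution via Theorem~\ref{thm:equiva_s} (or via the coincidence of the values of $\hat{\mathcal M}_1(\lambda)$ over $\mathcal U^{\rm RS}$ and $\mathcal U^{\rm DS}$) is exactly the justification the paper relies on implicitly.
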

     \begin{proof}
     We first prove the necessity. Suppose $u^*$ is an optimal policy of
     $\mathcal M_1$, then we have
     \begin{equation}
        \nonumber
        \VaR^{u^*} \ge \VaR^{u}, \qquad \forall u \in \mathcal U^{\rm DS}.
     \end{equation}
     By taking $X=R^{u^*}_{\infty}$ and $Y=R^{u}_{\infty}$ in
     (\ref{eq_VaRmax-optim}) of Lemma~\ref{lem_VaRmax}, 
     we obtain
     \begin{equation}
        \nonumber
        F^u(\VaR^{u^*}) \ge \alpha, \qquad \forall u \in \mathcal U^{\rm DS}.
     \end{equation}
     Therefore, the minimal probability $F^*(\VaR^{u^*})=\min\limits_{u
        \in \mathcal U^{\rm DS}}  F^u(\VaR^{u^*}) \ge \alpha$.

     The sufficiency can also be proved by using a similar argument.
     \end{proof}

     \begin{figure}[htbp]
     \centering
     \includegraphics[width=0.9\textwidth]{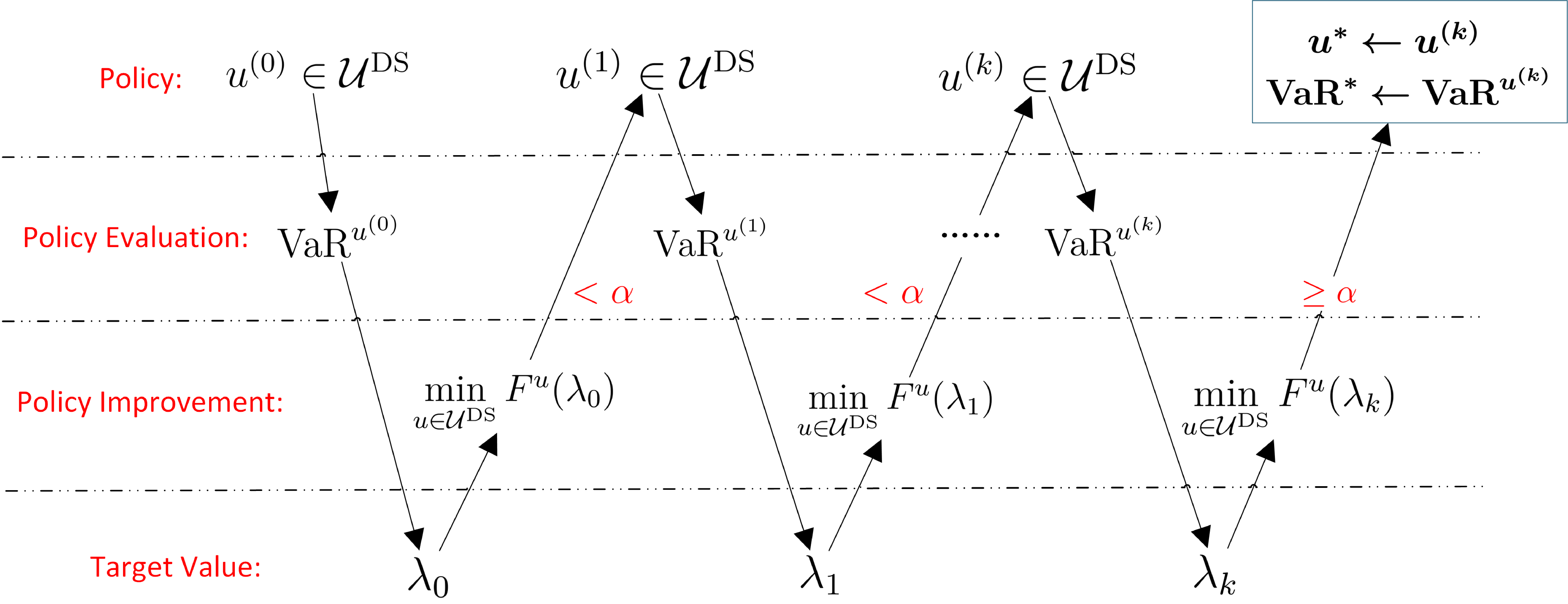}
     \caption{The iterative procedure illustration of
        Algorithm~\ref{alg:PI_steady}.}\label{fig:iter_dynamic}
        \end{figure}

        With Theorems~\ref{thm:equiva_s}, \ref{thm:imp_aver} and
        \ref{thm:opt_aver}, we can develop a policy iteration type algorithm
        to efficiently solve the steady-state VaR maximization MDP $\mathcal
        M_1$, where the policy improvement step solves a probabilistic
        minimization MDP (\ref{equ:promdp_s2}).
        Figure~\ref{fig:iter_dynamic} illustrates the iterative procedure of
        Algorithm~\ref{alg:PI_steady}. Specifically,
        Algorithm~\ref{alg:PI_steady} consists of two main components:
        policy evaluation and policy improvement. The evaluation of a policy
        can be computed with the aid of the definition of VaR and its
        steady-state distribution. An improved policy can be obtained by
        solving a probabilistic minimization MDP $\hat{\mathcal
     M}_1(\lambda)$, as stated in Theorem~\ref{thm:imp_aver}. The
     stopping rule indicates that the algorithm reaches the optimality
     condition, as guaranteed by Theorem~\ref{thm:opt_aver}. The detailed
     procedure is described in Algorithm~\ref{alg:PI_steady}. The
     algorithm's convergence is guaranteed by the following
     Theorem~\ref{thm:conver_steady}.

     \begin{algorithm}[h]
     \caption{Policy iteration type algorithm for steady-state VaR maximization MDPs}\label{alg:PI_steady}
     \begin{algorithmic}[1]
        \Require {MDP parameters $\mathcal{M}
            = \langle \mathcal S, \mathcal A, (\mathcal A(s), s \in \mathcal S), P, r  \rangle$.}
        \Ensure {An optimal policy and the maximum VaR.}
        \State \textbf{Initialization}:
        Arbitrarily choose an initial policy $u^{(0)} \in \mathcal U^{\rm DS}$, $k \gets 0$.
        \State \textbf{Policy Evaluation}: \\
        \quad Compute $F^{u^{(k)}}(\lambda)$ for all $\lambda \in \Lambda$, by using the steady-state distribution of $u^{(k)}$.\\
        \quad Compute $\VaR^{u^{(k)}} = \min\left\{\lambda \in \Lambda: F^{u^{(k)}}(\lambda) \ge \alpha\right\}.$
        \State \textbf{Policy Improvement}: \\
        \quad Update target level $\lambda_k=\VaR^{u^{(k)}}$.\\
        \quad Solve the probabilistic minimization MDP $\hat{\mathcal M}_1(\lambda_k)$ in (\ref{equ:promdp_s2}): \\
        \quad\quad  $F^*(\lambda_k)=\min\limits_{ u\in \mathcal U^{\rm DS}} F^{u}(\lambda_k)$,  $\hat{u}^* = \argmin\limits_{ u\in \mathcal U^{\rm DS}} F^{u}(\lambda_k)$.
        \While{$F^*(\lambda_k)<\alpha$}
        \State\textbf{Parameters Update}: $u^{(k+1)} \gets \hat{u}^*$, $k \gets k+1$, and go to line 2.
        \EndWhile
        \State \textbf{return}  $u^{(k)}$ and $\VaR^{u^{(k)}}$.
     \end{algorithmic}
     \end{algorithm}

     \begin{thm}\label{thm:conver_steady}
     Algorithm \ref{alg:PI_steady} converges to an optimal policy of the
     steady-state VaR maximization MDP within finite iterations.
     \end{thm}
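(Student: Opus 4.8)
The plan is to show that Algorithm~\ref{alg:PI_steady} generates a sequence of deterministic stationary policies whose steady-state VaR values are \emph{strictly increasing} as long as the \texttt{while} loop runs, and then to observe that these values live in the finite set $\Lambda$, which forces termination after at most $|\Lambda|-1$ passes. The final ingredient is that the stopping test of the algorithm is precisely the necessary-sufficient optimality condition of Theorem~\ref{thm:opt_aver}.

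First I would check well-definedness along the run. The initial $u^{(0)}$ lies in $\mathcal U^{\rm DS}$ by construction, and each policy $\hat u^*$ returned in the policy improvement step is an optimal deterministic stationary policy of the standard long-run average MDP $\hat{\mathcal M}_1(\lambda_k)$ (with reward $r_{\lambda_k}$), which exists by the classical theory of average-reward MDPs invoked before Theorem~\ref{thm:equiva_s}; hence every $u^{(k)}\in\mathcal U^{\rm DS}$. Moreover $R^{u^{(k)}}_\infty$ is discrete with support in $\Lambda$, so $\{\lambda\in\Lambda: F^{u^{(k)}}(\lambda)\ge\alpha\}$ is nonempty (it contains $\max\Lambda$) and the evaluation step returns a well-defined $\VaR^{u^{(k)}}=\min\{\lambda\in\Lambda: F^{u^{(k)}}(\lambda)\ge\alpha\}\in\Lambda$.

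Next I would establish strict monotonicity and finite termination. Whenever the loop body executes at iteration $k$, the entry test gives $F^*(\lambda_k)<\alpha$ with $\lambda_k=\VaR^{u^{(k)}}$, and since $u^{(k+1)}=\hat u^*\in\argmin_{u\in\mathcal U^{\rm DS}}F^u(\lambda_k)$ we have $F^{u^{(k+1)}}(\lambda_k)=F^*(\lambda_k)<\alpha$. Applying Theorem~\ref{thm:imp_aver} with $u=u^{(k)}$, $u'=u^{(k+1)}$ and $\lambda=\lambda_k=\VaR^{u^{(k)}}$ yields $\VaR^{u^{(k+1)}}>\VaR^{u^{(k)}}$. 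Thus $(\VaR^{u^{(k)}})_k$ is a strictly increasing sequence in the finite set $\Lambda$, so the loop can run at most $|\Lambda|-1$ times; the algorithm terminates at some iteration $k=K$. At termination the \texttt{while} condition fails, i.e.\ $F^*(\VaR^{u^{(K)}})=F^*(\lambda_K)\ge\alpha$, and Theorem~\ref{thm:opt_aver} then gives that $u^{(K)}$ is optimal for $\mathcal M_1$; consequently $\VaR^{u^{(K)}}=\VaR^*$ is the returned maximum VaR.

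The only real care needed—there is no genuine analytic obstacle here—is bookkeeping: verifying at each pass that the hypotheses of Theorem~\ref{thm:imp_aver} hold verbatim (the target level equals the current policy's VaR, and the improved policy strictly beats the confidence level $\alpha$), and that the loop's exit condition is literally the negation of the strict-improvement condition, so that Theorem~\ref{thm:opt_aver} applies at exit. This makes the argument a finite-termination proof entirely analogous to the convergence of policy iteration for classical average-reward MDPs, with Theorems~\ref{thm:imp_aver} and~\ref{thm:opt_aver} playing the roles of the usual policy-improvement and optimality lemmas.
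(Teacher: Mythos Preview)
Your proposal is correct and follows essentially the same route as the paper: strict improvement via Theorem~\ref{thm:imp_aver}, finite termination, and optimality at exit via Theorem~\ref{thm:opt_aver}. The only minor difference is that you bound the number of iterations by $|\Lambda|-1$ using the finiteness of the value set $\Lambda$, whereas the paper invokes the finiteness of $\mathcal U^{\rm DS}$; your bound is sharper but the logic is the same.
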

     \begin{proof}
     First, it follows from Theorem~\ref{thm:imp_aver} that the sequence
      $\langle\lambda_k\rangle_{k \ge 0}$ generated by Algorithm~\ref{alg:PI_steady}
     is strictly increasing. Since the optimum can be attained in
     deterministic stationary policy space $\mathcal U^{\rm DS}$ which is
     finite, Algorithm~\ref{alg:PI_steady} will stop at a policy $u^{(n)}
     \in \mathcal U^{\rm DS}$ after a finite number of iterations. Via
     the stopping rule in line 9, we have $F^*(\VaR^{u^{(n)}}) \ge
     \alpha$, implying that $u^{(n)}$ is an optimal policy of VaR MDP
     $\mathcal M_1$ according to Theorem~\ref{thm:opt_aver}. Thus, the
     theorem is proved.
     \end{proof}


     \section{Maximization of Finite-Horizon VaR MDPs}\label{sec:tvarmdp}
     In this section, we propose an optimization approach to solve the
     finite-horizon VaR maximization MDP $\mathcal M_2$ defined in (\ref{equ:tvarmdp})
     with the aid of finite-horizon probabilistic MDPs. It is worth
     noting that the $T$-horizon accumulated reward $R_{0:T}$ is a
     discrete random variable with support on
     $\Lambda_0:=\left\{\sum\limits_{t=0}^{T-1}r(s_t,a_t):(s_t,a_t) \in
     \mathcal K\right\}$. Thus, we can simplify (\ref{equ:tvar}) to a
     computable form below.
     \begin{equation}\label{equ:new_tvar}
     \VaR^u(s_0)=\min\left\{\lambda_0 \in \Lambda_0: \mathbbm{P}_{s_0}^u
     (R_{0:T} \le \lambda_0) \ge \alpha\right\}, \qquad s_0 \in \mathcal
     S, u \in \mathcal U^{\rm RH}.
     \end{equation}
     Thus, the finite-horizon VaR MDP $\mathcal M_2$ in
     (\ref{equ:tvarmdp}) can be rewritten as the following form of
     \emph{bilevel optimization} with constraints.
     \begin{equation}\nonumber
     \VaR^*(s_0) = \sup_{u \in \mathcal U^{\rm RH}}\min_{\lambda_0 \in
        \Lambda_0}\left\{\lambda_0 : \mathbbm{P}_{s_0}^u (R_{0:T} \le
     \lambda_0) \ge \alpha\right\}, \qquad s_0 \in \mathcal S.
     \end{equation}
     However, the above problem is difficult to solve directly. Based on
     Lemma~\ref{lem_VaRmax}, if we set $Y=R_{0:T}$ in
     (\ref{eq_VaRmax-impr}), we have
     \begin{equation}\label{equ:relation}
     \mathbbm{P}_{s_0}^u(R_{0:T} \le \lambda_0)<\alpha
     \Longleftrightarrow \VaR^u(s_0)>\lambda_0, \qquad s_0 \in \mathcal
     S, u \in \mathcal U^{\rm RH}, \lambda_0 \in \mathbb R.
     \end{equation}
     Similar to the method used in Section~\ref{sec:svarmdp}, we define
     an auxiliary finite-horizon probabilistic minimization MDP to help
     solve $\mathcal M_2$. Given an initial state $s_0 \in \mathcal S$
     and a target level $\lambda_0 \in \mathbb R$, the probability of
     $T$-horizon accumulated rewards that does not exceed $\lambda_0$
     under policy $u \in \mathcal U^{\rm RH}$ is
     \begin{equation}\label{equ:pro}
     F^u(s_0,\lambda_0):=\mathbbm{P}^{u}_{s_0} (R_{0:T} \le \lambda_0).
     \end{equation}
     We define a finite-horizon probabilistic minimization MDP
     $\hat{\mathcal M}_2(\lambda_0)$, whose
     objective is to minimize the probability (\ref{equ:pro}) among
     randomized history-dependent policies for each initial state $s_0$,
     i.e.,
     \begin{flalign}\label{equ:promdp}
     &\hat{\mathcal M}_2(\lambda_0): \hspace{3cm} F^*(s_0,\lambda_0):=
     \inf\limits_{u \in \mathcal U^{\rm RH}}   F^u(s_0,\lambda_0), \qquad
     s_0 \in \mathcal{S},&
     \end{flalign}
     where $F^*(s_0,\lambda_0)$ is called the \emph{optimal value
     function} of the finite-horizon probabilistic minimization MDP. A
     policy $u^* \in \mathcal U^{\rm RH}$ is \emph{optimal} for the
     probabilistic minimization MDP if
     \begin{equation}\nonumber
     F^{u^*}(s_0,\lambda_0) = F^*(s_0,\lambda_0),  \qquad\forall s_0 \in
     \mathcal{S}.
     \end{equation}
     If we can solve $\hat{\mathcal M}_2(\lambda_0)$ and obtain an
     optimal policy $u$ that satisfies $F^{u}(s_0,\lambda_0) < \alpha$,
     (\ref{equ:relation}) guarantees that $\VaR^u(s_0)>\lambda_0$ and the
     VaR value under this policy $u$ is improved compared with the target
     level $\lambda_0$. Thus, we can update the target level as
     $\lambda_0=\VaR^u(s_0)$ and repeat this process to continually find
     improved policies.

     However, due to the non-linearity and non-additivity of the
     probability metric, the classical dynamic programming is not
     applicable to solve this finite-horizon probabilistic minimization
     MDP $\hat{\mathcal M}_2(\lambda_0)$. To overcome this challenge, we
     convert $\hat{\mathcal M}_2(\lambda_0)$ to a standard MDP by using
     the technique of augmenting state space, where we treat the
     remaining goal at current stage as the auxiliary state \citep{white1993,huo2017}.

     We define an augmented MDP by tuple $\langle
     \tilde{\mathcal{S}},\tilde{\mathcal{A}},(\tilde{\mathcal{A}}(\tilde{s}),
     \tilde{s} \in \tilde{\mathcal{S}}),\tilde P,\tilde r\rangle$ with a
     2-dimensional state space $\tilde{\mathcal{S}} := \mathcal{S} \times
     \mathbb{R}$, where the first dimension is the state of the original
     MDP and the second dimension represents the remaining goal at
     current stage. The action space $\tilde{\mathcal{A}}:=\mathcal{A}$
     and the admissible action set
     $\tilde{\mathcal{A}}(s,\lambda):=\mathcal{A}(s)$, for any augmented
     state $(s,\lambda) \in \tilde{\mathcal{S}}$. Suppose the state is
     $(s_t,\lambda_t) \in \tilde{\mathcal{S}}$ at time $t$ and an action
     $a_t \in \mathcal{A}(s_t)$ is adopted, the system will receive an
     instantaneous reward $\tilde{r}_t(s_t,\lambda_t,a_t)$ and move to a
     new state $(s_{t+1},\lambda_{t+1}) \in \tilde{\mathcal{S}}$ at time
     $t+1$. The transition kernel $\tilde P$ and the reward
     $\tilde r_t(s_t,\lambda_t,a_t)$ are determined as follows.
     \begin{equation}\nonumber
     \tilde{P}(s_{t+1},\lambda_{t+1}|s_t,\lambda_t, a_t) :=
     P(s_{t+1}|s_t,a_t)\times \mathbbm I \{\lambda_{t+1} = \lambda_{t} -
     r(s_t, a_t)\}, \quad t=0,1,\dots, T-1,
     \end{equation}
     \begin{equation}\nonumber
     \tilde{r}_t(s_t,\lambda_t,a_t) := \left\{
     \begin{array}{ll}
        0, \quad & t=0,1,\dots,T-1, \\
        \mathbbm{I}\left\{0 \le \lambda_T\right\}, \quad & t=T.
     \end{array}
     \right.
     \end{equation}
     We denote by $\tilde{\mathcal U}^{\rm RH}$ the set of all randomized
     history-dependent policies $\tilde{u}:=(\tilde{u}_t;~ t \ge 0 )$,
     where $\tilde{u}_t$ is a probability measure on $\mathcal{A}$ given
     history $\tilde{h}_t := (s_0,\lambda_0,a_0,\ldots,s_t,\lambda_t)$.
     Similarly, we denote by $\tilde{\mathcal U}^{\rm RM}$ and
     $\tilde{\mathcal U}^{\rm DM}$ the sets of all randomized Markov
     policies and deterministic Markov policies of this augmented MDP,
     respectively. Given initial state $(s_0,\lambda_0) \in
     \tilde{\mathcal{S}}$ and policy $\tilde{u} \in \tilde{\mathcal
     U}^{\rm RH}$, we denote by $\mathbbm{P}_{s_0,\lambda_0}^{\tilde{u}}$
     the unique probability measure on the space of trajectories of
     augmented states and actions and by
     $\mathbbm{E}_{s_0,\lambda_0}^{\tilde{u}}$ the expectation operator
     corresponding to $\mathbbm{P}_{s_0,\lambda_0}^{\tilde{u}}$.

     With the definition of this new finite-horizon augmented MDP, we
     focus on the criterion of expected total rewards. Given an initial
     state $(s_0,\lambda_0) \in \tilde{\mathcal{S}}$ and a policy $\tilde
     u \in \tilde{\mathcal U}^{\rm RH}$, we define the $T$-horizon
     expected rewards as follows.
     \begin{eqnarray*}
     V_0^{\tilde{u}}(s_0,\lambda_0) &:=& \mathbbm{E}_{s_0,\lambda_0}^{\tilde{u}}
     \big[\sum_{t=0}^{T}\tilde{r}_{t}(s_t,\lambda_t,a_t)] = \mathbbm{E}_{s_0,\lambda_0}^{\tilde{u}}
     \Big[\mathbb{I}\left\{0 \le \lambda_T\right\}\Big] =
     \mathbbm{E}_{s_0,\lambda_0}^{\tilde{u}}
     \Big[\mathbb{I}\left\{\sum_{t=0}^{T-1}{r}(s_t,a_t) \le \lambda_0\right\}\Big] \\&=&
     \mathbbm{P}_{s_0,\lambda_0}^{\tilde{u}}\Big(\sum_{t=0}^{T-1}{r}(s_t,a_t)
     \le \lambda_0\Big),
     \end{eqnarray*}
     where the third equality recursively utilizes the fact
     $\lambda_{t+1}=\lambda_t-r(s_t,a_t)$. This expectation of total rewards is exactly the same as
     the probability defined in (\ref{equ:pro}). We define a
     finite-horizon augmented MDP $\tilde{\mathcal M}_2$ that aims to
     minimize the above expected total rewards for each initial state
     $(s_0,\lambda_0) \in \tilde{\mathcal S}$, i.e.,
     \begin{flalign}\label{equ:newmdp}
     &\tilde{\mathcal M}_2: \hspace{3cm} V_0^*(s_0,\lambda_0) =
     \inf\limits_{\tilde{u} \in \tilde{\mathcal U}^{\rm RH}}
     V_0^{\tilde{u}}(s_0,\lambda_0), \qquad (s_0,\lambda_0) \in
     \tilde{\mathcal S},&
     \end{flalign}
     where $V_0^*(s_0,\lambda_0)$ is called the \emph{optimal value
     function} of this augmented MDP $\tilde{\mathcal M}_2$. A policy
     $\tilde{u}^* \in \tilde{\mathcal U}^{\rm RH}$ is \emph{optimal} if
     it attains the above optimal values, i.e.,
     \begin{equation*}
     V_0^{\tilde{u}^*}(s_0,\lambda_0) = V_0^*(s_0,\lambda_0), \qquad
     \forall (s_0,\lambda_0) \in \tilde{\mathcal S}.
     \end{equation*}

     Since $\tilde{\mathcal M}_2$ is a standard MDP with expectation
     criterion, we can restrict its policy search space to a
     deterministic Markov policy space $\tilde{\mathcal U}^{\rm DM}$ and
     solve it by dynamic programming \citep{puterman1994markov}. To this end, we
     define the Bellman operator and the Bellman optimality operator. Let
     $\mathcal B(\tilde{\mathcal{S}})$ be the space of all bounded
     functions on $\tilde{\mathcal{S}}$ and $\varphi: \tilde{\mathcal S}
     \rightarrow \mathcal P(\mathcal A)$, we define the Bellman operator
     $\mathbbm L^\varphi: \mathcal B(\tilde{\mathcal{S}}) \rightarrow
     \mathcal B(\tilde{\mathcal{S}})$ for policy evaluation as
     \begin{equation}\label{equ:bellman}
     \mathbbm L^\varphi v(s,\lambda) := \sum\limits_{a \in
        \mathcal{A}(s)}\varphi(a|s,\lambda) \left\{\sum\limits_{s' \in
        \mathcal{S}}P(s'|s,a)v(s',\lambda-r(s,a))\right\},\quad v \in
     \mathcal B(\tilde{\mathcal{S}}),~ (s,\lambda) \in
     \tilde{\mathcal{S}},
     \end{equation}
     where $\varphi$ can be viewed as a Markov decision rule that can be
     randomized or deterministic. We also define the Bellman optimality
     operator $\mathbbm L^*: \mathcal B(\tilde{\mathcal{S}}) \rightarrow
     \mathcal B(\tilde{\mathcal{S}})$ for optimization as
     \begin{equation}\label{equ:bellman_opt}
     \mathbbm L^* v(s,\lambda) := \min\limits_{a \in \mathcal{A}(s)}
     \left\{\sum\limits_{s' \in
        \mathcal{S}}P(s'|s,a)v(s',\lambda-r(s,a))\right\},\qquad v \in
     \mathcal B(\tilde{\mathcal{S}}),~ (s,\lambda) \in
     \tilde{\mathcal{S}}.
     \end{equation}

     By utilizing the Bellman operator in (\ref{equ:bellman}) and the
     Bellman optimality operator in (\ref{equ:bellman_opt}) recursively,
     we can evaluate $V_0^{\tilde{u}}(s_0,\lambda_0)$ for any Markov
     policy $\tilde{u} \in \tilde{\mathcal U}^{\rm RM}$ and solve the
     optimal value function $V_0^{*}(s_0,\lambda_0)$ as well as the
     optimal policy $\tilde{u}^*$ for the standard MDP $\tilde{\mathcal
     M}_2$. We summarize this result in Lemma \ref{lem:dp} as follows.

     \begin{lem}\label{lem:dp}
     (a) Given a Markov policy
     $\tilde{u}=(\tilde{u}_0,\ldots,\tilde{u}_{T-1}) \in \tilde{\mathcal
        U}^{\rm RM}$, the value $V_0^{\tilde{u}}(s_0,\lambda_0)$ can be
     computed by successively conducting a series of Bellman operators
     $(\mathbbm L^{\tilde{u}_t};~ t=0,\ldots,T-1)$ with initial value
     $V_T^{\tilde{u}}(s_T,\lambda_T):=\mathbb{I}\left\{0 \le
     \lambda_T\right\}$, that is,
     \begin{equation}\label{equ:dp}
        V_t^{\tilde{u}}(s_t,\lambda_t)=\mathbbm
        L^{\tilde{u}_t}V_{t+1}^{\tilde{u}}(s_t,\lambda_t), \qquad
        t=0,\dots,T-1,~ (s_t,\lambda_t) \in \tilde{\mathcal S}.
     \end{equation}
     (b) The optimal value function $V_0^*(s_0,\lambda_0)$ can be solved
     by successively conducting the Bellman optimality operator $\mathbbm
     L^*$ with initial value $V_T^*(s_T,\lambda_T):=\mathbb{I}\left\{0
     \le \lambda_T\right\}$, that is,
     \begin{equation}\label{equ:opt_dp}
        V_t^*(s_t,\lambda_t)=\mathbbm L^*V_{t+1}^*(s_t,\lambda_t), \qquad
        t=0,\ldots,T-1,~ (s_t,\lambda_t) \in \tilde{\mathcal S}.
     \end{equation}
     Further, there exists $a_t^*\in \mathcal A(s_t)$ that attains the
     minimum in $\mathbbm L^* V_{t+1}^*(s_t,\lambda_t)$, and the
     deterministic Markov policy $\tilde{u}^*=(\tilde{u}_t^*;~ t \ge 0)
     \in \tilde{\mathcal U}^{\rm {DM}}$ with
     $\tilde{u}_t^*(s_t,\lambda_t) = a_t^*$ is an optimal policy for the
     augmented MDP $\tilde {\mathcal M}_2$ in (\ref{equ:newmdp}).
     \end{lem}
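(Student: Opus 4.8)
The plan is to observe that $\tilde{\mathcal M}_2$ in (\ref{equ:newmdp}) is an ordinary finite-horizon MDP whose one-stage rewards are indicator functions, hence bounded, and whose reachable second coordinates along any trajectory started from $(s_0,\lambda_0)$ form the finite set $\{\lambda_0-\sum_{k<t}r(s_k,a_k):(s_k,a_k)\in\mathcal K,\ t\le T\}$. Consequently all value functions below lie in $\mathcal B(\tilde{\mathcal S})$, and each minimum over the finite set $\mathcal A(s)$ in (\ref{equ:bellman_opt}) is attained. The statement is then the classical backward-induction (dynamic programming) principle, which I would prove by induction on the number $T-t$ of remaining stages.

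For part (a), first define the reward-to-go of a Markov policy $\tilde u$,
\[
V_t^{\tilde u}(s,\lambda):=\mathbbm E_{s,\lambda}^{\tilde u}\Big[\sum_{k=t}^{T}\tilde r_k(s_k,\lambda_k,a_k)\Big]=\mathbbm E_{s,\lambda}^{\tilde u}\big[\mathbb{I}\{0\le\lambda_T\}\big],
\]
where the second equality uses $\tilde r_k\equiv 0$ for $k<T$; at $t=T$ this is the stated initial value $V_T^{\tilde u}(s,\lambda)=\mathbb{I}\{0\le\lambda\}$. Conditioning on the action $a_t$ drawn by $\tilde u_t$ and on the next state $s_{t+1}$ — recalling that $\lambda_{t+1}=\lambda_t-r(s_t,a_t)$ is deterministic given $(s_t,\lambda_t,a_t)$ — and invoking the Markov property of $\mathbbm P^{\tilde u}_{s,\lambda}$ together with the tower property, one obtains
\[
V_t^{\tilde u}(s,\lambda)=\sum_{a\in\mathcal A(s)}\tilde u_t(a|s,\lambda)\sum_{s'\in\mathcal S}P(s'|s,a)\,V_{t+1}^{\tilde u}\big(s',\lambda-r(s,a)\big)=\mathbbm L^{\tilde u_t}V_{t+1}^{\tilde u}(s,\lambda),
\]
which is exactly (\ref{equ:dp}). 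Iterating this identity backward from $t=T-1$ down to $t=0$ produces $V_0^{\tilde u}(s_0,\lambda_0)$, proving (a).

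For part (b), let $V_t^*$ be the functions generated by the recursion (\ref{equ:opt_dp}) with terminal value $V_T^*(s,\lambda)=\mathbb{I}\{0\le\lambda\}$. I would establish, by backward induction on $t$, the two claims: (i) $V_t^*(s,\lambda)\le V_t^{\tilde u}(s,\lambda)$ for every history-dependent policy $\tilde u\in\tilde{\mathcal U}^{\rm RH}$ and every $(s,\lambda)$; and (ii) equality holds for the deterministic Markov policy $\tilde u^*$ defined by letting $\tilde u_t^*(s,\lambda)$ be any minimizer $a_t^*$ of $\mathbbm L^*V_{t+1}^*(s,\lambda)$, which exists because $\mathcal A(s)$ is finite. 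For (i), one conditions on the history $\tilde h_t=(s_0,\lambda_0,a_0,\ldots,s_t,\lambda_t)$: the continuation of $\tilde u$ after $\tilde h_t$ is again a policy for the remaining $(T-t)$-stage problem started at $(s_t,\lambda_t)$, so the conditional reward-to-go given $\tilde h_t$ is, by the induction hypothesis,
\begin{align*}
\ge\ \sum_{a\in\mathcal A(s_t)}\tilde u_t(a|\tilde h_t)\sum_{s'\in\mathcal S}P(s'|s_t,a)\,V_{t+1}^*\big(s',\lambda_t-r(s_t,a)\big)
\ \ge\ \min_{a\in\mathcal A(s_t)}\sum_{s'\in\mathcal S}P(s'|s_t,a)\,V_{t+1}^*\big(s',\lambda_t-r(s_t,a)\big)=V_t^*(s_t,\lambda_t);
\end{align*}
taking expectations over $\tilde h_t$ gives (i). Claim (ii) is immediate from part (a) applied to $\tilde u^*$: at each stage $\mathbbm L^{\tilde u_t^*}V_{t+1}^*=\mathbbm L^*V_{t+1}^*$ by the choice of $a_t^*$, so backward induction yields $V_t^{\tilde u^*}=V_t^*$ for all $t$. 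Combining (i) and (ii), $V_0^*(s_0,\lambda_0)=\inf_{\tilde u\in\tilde{\mathcal U}^{\rm RH}}V_0^{\tilde u}(s_0,\lambda_0)=V_0^{\tilde u^*}(s_0,\lambda_0)$, so $V_0^*$ is the optimal value function in (\ref{equ:newmdp}) and $\tilde u^*\in\tilde{\mathcal U}^{\rm DM}$ is optimal.

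The argument is essentially routine; the two points that deserve care are: making the conditioning on general histories rigorous in claim (i) — one must argue that the continuation of a history-dependent policy after $\tilde h_t$ is still an admissible policy for the shorter problem, so that the induction hypothesis applies and monotonicity of conditional expectation can be used — and noting that the nominally uncountable state component $\lambda$ ranges over a finite reachable set, which is what legitimates working in $\mathcal B(\tilde{\mathcal S})$ and guarantees attainment of the minima in (\ref{equ:bellman_opt}). Beyond these, the proof is just the law of total expectation plus the Markov property.
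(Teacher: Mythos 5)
Your proof is correct and is precisely the classical backward-induction argument for finite-horizon MDPs that the paper itself invokes: the authors omit the detailed proof of this lemma and simply cite the standard theory (Puterman, Chapter~4), which is exactly what you have reconstructed. Your attention to the two genuine technical points --- that the continuation of a history-dependent policy after a history $\tilde h_t$ is again admissible for the shorter problem, and that the second state coordinate ranges over a finite reachable set so the minima in the Bellman optimality operator are attained --- covers the only places where care is needed.
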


     The next lemma establishes the relations between the finite-horizon
     probabilistic minimization MDP $\hat{\mathcal M}_2(\lambda_0)$ in
     (\ref{equ:promdp}) and the finite-horizon augmented MDP
     $\tilde{\mathcal M}_2$ with expectation criterion in
     (\ref{equ:newmdp}), and further derives the optimal policy and the
     optimal value function of $\hat{\mathcal M}_2(\lambda_0)$.

     \begin{lem}\label{lem:optpolicy}
     (a)  Given a deterministic Markov policy $\tilde u=(\tilde
     u_0,\ldots,\tilde u_{T-1}) \in \tilde{\mathcal U}^{\rm DM}$ and a
     target level $\lambda_0 \in \Lambda_0$, if a deterministic
     history-dependent policy $u=( u_0,\ldots, u_{T-1}) \in
     \mathcal{U}^{\rm DH}$ satisfies $ u_{t}(s_0,a_0,\ldots,s_t)=
     \tilde{u}_t(s_t,\lambda_0-\sum\limits_{\tau=0}^{t-1}
     r_\tau(s_\tau,a_\tau))$, then we have
     \begin{equation} \label{equ:eval_equal}
        F^{u}(s_0,\lambda_0)=V_0^{\tilde u}(s_0,\lambda_0), \qquad \forall
        s_0 \in \mathcal S.
     \end{equation}
     (b) Suppose the deterministic Markov policy $\tilde u^*=(\tilde
        u^*_0,\ldots,\tilde u^*_{T-1}) \in \tilde{\mathcal U}^{\rm DM}$ is an
     optimal policy for the augmented MDP $\tilde{\mathcal M}_2$ in
     (\ref{equ:newmdp}), then the deterministic history-dependent policy
     $u^*=(u^*_0,\ldots, u^*_{T-1}) \in
        \mathcal{U}^{\rm DH}$ with $
     u^*_{t}(s_0,a_0,\ldots,s_t) :=
     \tilde{u}_t^*(s_t,\lambda_0-\sum\limits_{\tau=0}^{t-1}
     r_\tau(s_\tau,a_\tau))$ is an optimal policy for the finite-horizon
     probabilistic minimization MDP $\hat{\mathcal M}_2(\lambda_0)$ in
     (\ref{equ:promdp}). Moreover, we have
     \begin{equation}
        \label{equ:value_equal} F^*(s_0,\lambda_0)=V_0^*(s_0,\lambda_0),
        \qquad \forall (s_0,\lambda_0) \in \tilde{\mathcal S}.
     \end{equation}
     \end{lem}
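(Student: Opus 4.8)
The plan is to prove (a) by a backward induction on the time index that identifies $V_t^{\tilde u}$ with a conditional ``remaining‑goal reached'' probability, and then to deduce (b) by combining (a) with the (trivial) fact that the augmented state is a sufficient statistic.

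\textbf{Part (a).} Fix $\lambda_0$ and the paired policies $\tilde u,u$ from the statement. I would prove, by downward induction on $t=T,T-1,\dots,0$, that for every history $h_t=(s_0,a_0,\dots,s_t)$, writing $\lambda_t:=\lambda_0-\sum_{\tau=0}^{t-1} r(s_\tau,a_\tau)$,
\[ \mathbbm P_{s_0}^{u}\Big(\textstyle\sum_{\tau=t}^{T-1} r(s_\tau,a_\tau)\le \lambda_t \;\Big|\; h_t\Big)=V_t^{\tilde u}(s_t,\lambda_t). \]
The hypothesis relating $u$ and $\tilde u$ makes every decision rule of $u$ at a time $\tau\ge t$ a function of $s_\tau$ and $\lambda_t-\sum_{j=t}^{\tau-1} r(s_j,a_j)$ only, so by the Markov property the conditional probability on the left depends on $h_t$ solely through $(s_t,\lambda_t)$; this is what makes the display well posed. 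The base case $t=T$ is the identity $\mathbb{I}\{0\le\lambda_T\}=V_T^{\tilde u}(s_T,\lambda_T)$, which is the definition of the terminal value. For the step I would condition on $(a_t,s_{t+1})$, use $u_t(\cdot\,|\,h_t)=\tilde u_t(\cdot\,|\,s_t,\lambda_t)$ and $\lambda_{t+1}=\lambda_t-r(s_t,a_t)$, apply the induction hypothesis, and recognise the result as $\mathbbm L^{\tilde u_t}V_{t+1}^{\tilde u}(s_t,\lambda_t)$, which equals $V_t^{\tilde u}(s_t,\lambda_t)$ by the Bellman operator (\ref{equ:bellman}) and Lemma~\ref{lem:dp}(a). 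Taking $t=0$ and using $R_{0:T}=\sum_{t=0}^{T-1} r(s_t,a_t)$ (since $R_T=0$) gives $F^u(s_0,\lambda_0)=\mathbbm P_{s_0}^u(R_{0:T}\le\lambda_0)=V_0^{\tilde u}(s_0,\lambda_0)$. An equivalent one‑line route is pathwise: the map $(s_0,a_0,s_1,\dots)\mapsto(s_0,\lambda_0,a_0,s_1,\lambda_1,\dots)$ with $\lambda_{t+1}=\lambda_t-r(s_t,a_t)$ pushes $\mathbbm P_{s_0}^u$ forward onto $\mathbbm P_{s_0,\lambda_0}^{\tilde u}$, because the two systems have identical one‑step kernels under the stated correspondence, and the claim is then just $\mathbbm E_{s_0,\lambda_0}^{\tilde u}[\mathbb{I}\{\lambda_T\ge 0\}]=\mathbbm P_{s_0}^u(R_{0:T}\le\lambda_0)$.

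\textbf{Part (b).} The policy $u^*$ in the statement is a bona fide element of $\mathcal U^{\rm DH}$, since $\tilde u_t^*(s_t,\cdot)\in\mathcal A(s_t)$ for every reachable remaining goal. Applying part (a) to $(\tilde u^*,u^*)$ and then using that $\tilde u^*$ attains the optimum of $\tilde{\mathcal M}_2$ (Lemma~\ref{lem:dp}(b)) yields $F^{u^*}(s_0,\lambda_0)=V_0^{\tilde u^*}(s_0,\lambda_0)=V_0^*(s_0,\lambda_0)$, hence $F^*(s_0,\lambda_0)\le V_0^*(s_0,\lambda_0)$. For the reverse inequality I would lift an arbitrary $u\in\mathcal U^{\rm RH}$ to the augmented model: along any trajectory of $\tilde{\mathcal M}_2$ from $(s_0,\lambda_0)$ the $\lambda$‑coordinates are deterministic functions of the original coordinates, so an augmented history is in bijection with an original one once $\lambda_0$ is fixed, and setting $\tilde u_t(\cdot\,|\,\tilde h_t):=u_t(\cdot\,|\,h_t)$ gives $\tilde u\in\tilde{\mathcal U}^{\rm RH}$; the same pushforward argument as in (a) then gives $V_0^{\tilde u}(s_0,\lambda_0)=F^u(s_0,\lambda_0)$, and since $V_0^{\tilde u}\ge V_0^*$ by definition (\ref{equ:newmdp}), we get $F^u(s_0,\lambda_0)\ge V_0^*(s_0,\lambda_0)$; taking the infimum over $u$ yields $F^*(s_0,\lambda_0)\ge V_0^*(s_0,\lambda_0)$. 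Combining the inequalities gives $F^*(s_0,\lambda_0)=V_0^*(s_0,\lambda_0)$ on all of $\tilde{\mathcal S}$, and since this common value is attained by $u^*$, the policy $u^*$ is optimal for $\hat{\mathcal M}_2(\lambda_0)$ in (\ref{equ:promdp}).

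\textbf{Where the work is.} Part (a), the ``$\le$'' half of (b), and the final assembly are essentially bookkeeping. The substantive step is the ``$\ge$'' half of (b) --- that enriching the state with the remaining goal costs nothing, so that no history‑dependent policy of the original MDP can beat the best Markov policy of $\tilde{\mathcal M}_2$. This is the standard sufficiency‑of‑state‑augmentation argument, and the only care it requires is measure‑theoretic: checking that on the reachable set the $\lambda$‑coordinates are genuinely redundant and that the one‑step kernels match, so that the pushforward identity $\mathbbm P_{s_0,\lambda_0}^{\tilde u}=\phi_{\#}\mathbbm P_{s_0}^u$ holds. Finally, although part (a) is phrased for $\lambda_0\in\Lambda_0$, nothing in its proof uses $\Lambda_0$, so both (a) and the equality $F^*=V_0^*$ of (b) hold for every $\lambda_0\in\mathbb R$, as the ``moreover'' statement on all of $\tilde{\mathcal S}$ needs.
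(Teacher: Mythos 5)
Your proposal is correct, and it is precisely the state-augmentation/pushforward argument the paper has in mind: the paper omits the proof of this lemma, stating only that it ``follows directly from the aforementioned construction of the augmented MDP $\tilde{\mathcal M}_2$,'' and your backward induction for (a) plus the two-sided lifting argument for (b) supplies exactly the missing details. Your closing observation that nothing restricts $\lambda_0$ to $\Lambda_0$, so that (\ref{equ:value_equal}) indeed holds on all of $\tilde{\mathcal S}$, is a worthwhile point the paper leaves implicit.
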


     \begin{remark}
     Lemma~\ref{lem:optpolicy} implies that the optimum of the
     finite-horizon probabilistic minimization MDP $\hat{\mathcal
        M}_2(\lambda_0)$ in (\ref{equ:promdp}) can be attained by a
     \textit{deterministic history-dependent} policy in $\mathcal{U}^{\rm
        DH}$, which is not Markovian since
     $\lambda_t:=\lambda_0-\sum\limits_{\tau=0}^{t-1}r_\tau(s_\tau,a_\tau)$
     relies on the history rewards up to time~$t$. Therefore, we cannot
     limit our policy space to $\mathcal{U}^{\rm DM}$ for solving
     $\hat{\mathcal M}_2(\lambda_0)$, which is contrast to the optimality
     of $\mathcal{U}^{\rm DM}$ for standard MDPs.
     \end{remark}

     Lemma~\ref{lem:dp} can be derived based on the classical results of
     finite-horizon standard MDPs \citep[Chapter~4]{puterman1994markov} and
     Lemma~\ref{lem:optpolicy} follows directly from the aforementioned
     construction of the augmented MDP $\tilde{\mathcal M}_2$. Their
     detailed proofs are omitted due to space limitations. With
     Lemmas~\ref{lem:dp} and \ref{lem:optpolicy}, the probabilistic
     minimization MDP $\hat{\mathcal M}_2(\lambda_0)$ in
     (\ref{equ:promdp}) can be solved by using backward dynamic
     programming with (\ref{equ:opt_dp}). It is worth pointing out that
     (\ref{equ:dp}) provides a dynamic programming perspective for
     evaluating $\VaR^u(s_0)$ based on (\ref{equ:new_tvar}). Furthermore,
     we establish the relation between the finite-horizon VaR MDP
     $\mathcal M_2$ in (\ref{equ:tvarmdp}) and the finite-horizon
     augmented MDP $\tilde{\mathcal M}_2$ in (\ref{equ:newmdp}), as
     stated in Theorem~\ref{thm:equiva}.

     \begin{thm}\label{thm:equiva}{\rm (\textbf{Optimization Equivalence between $\mathcal M_2$ and
            $\tilde{\mathcal M}_2$})}

     \begin{itemize}
        \item[(a)] The optimal value function of the finite-horizon VaR maximization MDP $\mathcal M_2$ in (\ref{equ:tvarmdp})~ is given by        \begin{equation}\label{equ:opt_value}
            \VaR^*(s_0) = \min\limits_{\lambda_0 \in \Lambda_0}\left\{\lambda_0:
            \min\limits_{\tilde{u} \in \tilde{\mathcal U}^{\rm DM}}
            V_0^{\tilde{u}}(s_0,\lambda_0) \ge \alpha\right\}, \qquad s_0 \in
            \mathcal S.
        \end{equation}

\item[(b)]
Let $\VaR^*_{-}(s_0,\Lambda_0)$ be the left predecessor of
$\VaR^*(s_0)$ in $\Lambda_0$, and suppose $\tilde{u}^*$ is an
optimal deterministic Markov policy
        of the augmented MDP $\tilde {\mathcal M_2}$ in (\ref{equ:newmdp}),
        then there exists an optimal deterministic history-dependent policy
        $u^*=(u^*_{0}, \ldots, u^*_{T-1}) \in \mathcal U^{\rm {DH}}$ for
        $\mathcal M_2$, given by
        \begin{equation}\label{equ:optimal_policy}
            u^*_{t}(s_0,a_0,\dots,s_t) = \tilde{u}_t^*(s_t,
            \VaR^*_{-}(s_0,\Lambda_0)-\sum\limits_{\tau=0}^{t-1} r(s_\tau,a_\tau)),
            \qquad t=0,\dots,T-1.
        \end{equation}
     \end{itemize}
     \end{thm}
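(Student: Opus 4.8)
The plan is to transcribe the proof of Theorem~\ref{thm:equiva_s} into the finite-horizon setting, using Lemmas~\ref{lem:dp} and \ref{lem:optpolicy} to bridge between the probabilistic minimization MDP $\hat{\mathcal M}_2(\lambda_0)$ in (\ref{equ:promdp}) and the augmented expectation MDP $\tilde{\mathcal M}_2$ in (\ref{equ:newmdp}). Fix $s_0\in\mathcal S$ and let $\lambda^*(s_0)$ denote the right-hand side of (\ref{equ:opt_value}). First I would record that, by Lemma~\ref{lem:dp}(b), $\min_{\tilde u\in\tilde{\mathcal U}^{\rm DM}}V_0^{\tilde u}(s_0,\lambda_0)=V_0^*(s_0,\lambda_0)$, and by (\ref{equ:value_equal}) the latter equals $F^*(s_0,\lambda_0)$, so that
\[
\lambda^*(s_0)=\min\Big\{\lambda_0\in\Lambda_0:\ F^*(s_0,\lambda_0)\ge\alpha\Big\}.
\]
As in the proof of Theorem~\ref{thm:equiva_s}, it then suffices to establish the two inequalities
\[
\lambda^*(s_0)\ \ge\ \VaR^u(s_0)\quad\forall u\in\mathcal U^{\rm RH},\qquad \lambda^*(s_0)\ \le\ \VaR^{u^*}(s_0),
\]
where $u^*$ is the deterministic history-dependent policy built from an optimal $\tilde u^*\in\tilde{\mathcal U}^{\rm DM}$ (which exists by Lemma~\ref{lem:dp}(b)) via formula (\ref{equ:optimal_policy}), read with target level $\lambda^*_-(s_0,\Lambda_0)$ in place of $\VaR^*_-(s_0,\Lambda_0)$ to avoid circularity. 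Indeed, the first inequality gives $\lambda^*(s_0)\ge\VaR^*(s_0)$, the second gives $\lambda^*(s_0)\le\VaR^{u^*}(s_0)\le\VaR^*(s_0)$, hence $\VaR^*(s_0)=\lambda^*(s_0)$, which is~(a); then $\VaR^{u^*}(s_0)=\VaR^*(s_0)$ and $\lambda^*_-(s_0,\Lambda_0)=\VaR^*_-(s_0,\Lambda_0)$ give~(b), noting that the construction (\ref{equ:optimal_policy}) indeed lands in $\mathcal U^{\rm DH}$ (and not in general in $\mathcal U^{\rm DM}$) by the remark following Lemma~\ref{lem:optpolicy}.

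For the upper bound, fix any $u\in\mathcal U^{\rm RH}$. By the definition (\ref{equ:promdp}) of $F^*$ we have $F^u(s_0,\lambda^*(s_0))\ge F^*(s_0,\lambda^*(s_0))$, and by the defining property of $\lambda^*(s_0)$ the right-hand side is $\ge\alpha$. Since $\lambda^*(s_0)\in\Lambda_0$ and $R_{0:T}$ is $\Lambda_0$-valued, the computable form (\ref{equ:new_tvar}) of $\VaR^u(s_0)$ then yields $\VaR^u(s_0)\le\lambda^*(s_0)$.

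For the lower bound I would split into two cases, exactly as in Theorem~\ref{thm:equiva_s}. If $\lambda^*(s_0)=\min\Lambda_0$, then since $F^{u^*}(s_0,\max\Lambda_0)=1\ge\alpha$ we have $\VaR^{u^*}(s_0)\in\Lambda_0$, whence $\VaR^{u^*}(s_0)\ge\min\Lambda_0=\lambda^*(s_0)$ immediately. Otherwise $\lambda^*_-(s_0,\Lambda_0)\in\Lambda_0$ with $\lambda^*_-(s_0,\Lambda_0)<\lambda^*(s_0)$, so minimality of $\lambda^*(s_0)$ forces $F^*(s_0,\lambda^*_-(s_0,\Lambda_0))<\alpha$. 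Applying Lemma~\ref{lem:optpolicy} at target level $\lambda^*_-(s_0,\Lambda_0)$ to the optimal policy $\tilde u^*$ gives
\[
F^{u^*}(s_0,\lambda^*_-(s_0,\Lambda_0))=V_0^{\tilde u^*}(s_0,\lambda^*_-(s_0,\Lambda_0))=V_0^*(s_0,\lambda^*_-(s_0,\Lambda_0))=F^*(s_0,\lambda^*_-(s_0,\Lambda_0))<\alpha.
\]
Then (\ref{equ:relation}) with $\lambda_0=\lambda^*_-(s_0,\Lambda_0)$ gives $\VaR^{u^*}(s_0)>\lambda^*_-(s_0,\Lambda_0)$; since by (\ref{equ:pred}) no element of $\Lambda_0$ lies strictly between $\lambda^*_-(s_0,\Lambda_0)$ and $\lambda^*(s_0)$, and $\VaR^{u^*}(s_0)\in\Lambda_0$, we conclude $\VaR^{u^*}(s_0)\ge\lambda^*(s_0)$.

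I expect the only real obstacle to be the discreteness bookkeeping: the optimal policy for $\mathcal M_2$ must be extracted by solving the probabilistic minimization MDP at the \emph{left predecessor} $\lambda^*_-(s_0,\Lambda_0)$ rather than at $\VaR^*(s_0)$ itself, so the argument must invoke Lemma~\ref{lem:optpolicy} at precisely that target and define $u^*$ without reference to $\VaR^*(s_0)$ before part~(a) is available; the $\Lambda_0$-valuedness of $R_{0:T}$ also has to be used in a couple of spots to guarantee $\VaR^u(s_0)\in\Lambda_0$. Beyond this, everything is a routine transcription of the proof of Theorem~\ref{thm:equiva_s}, with Lemmas~\ref{lem:dp} and \ref{lem:optpolicy} playing the role that classical average-MDP theory played there.
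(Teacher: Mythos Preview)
Your proposal is correct and is precisely the argument the paper intends: the paper actually omits an explicit proof of Theorem~\ref{thm:equiva}, but your transcription of the proof of Theorem~\ref{thm:equiva_s}---with Lemmas~\ref{lem:dp} and~\ref{lem:optpolicy} supplying the bridge $F^*(s_0,\lambda_0)=V_0^*(s_0,\lambda_0)=\min_{\tilde u\in\tilde{\mathcal U}^{\rm DM}}V_0^{\tilde u}(s_0,\lambda_0)$---is exactly the route the surrounding text sets up. Your care with the left-predecessor bookkeeping and with defining $u^*$ via $\lambda^*_-(s_0,\Lambda_0)$ before part~(a) is established is appropriate and matches the logic of the steady-state proof.
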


     Theorem~\ref{thm:equiva} implies that we can limit our policy
     search space to $\mathcal U^{\rm DH}$, which is finite. According
     to Theorem~\ref{thm:equiva},  we can convert the finite-horizon VaR
     MDP to a \emph{bilevel optimization problem} with constraints, i.e.,
     \begin{equation}\label{equ:bileve}
     \VaR^*(s_0)=\max\limits_{u \in \mathcal U^{\rm
            DH}}\min\limits_{\lambda_0 \in \Lambda_0}\left\{\lambda_0:
     F^u(s_0,\lambda_0)\ge \alpha\right\}=\min\limits_{\lambda_0 \in
        \Lambda_0}\left\{\lambda_0: \min\limits_{u \in \mathcal U^{\rm
            DH}}F^u(s_0,\lambda_0)\ge \alpha\right\}, \quad s_0 \in \mathcal S,
            \end{equation}
            where the inner level is a policy optimization problem of minimizing
            the finite-horizon probabilistic MDP whose optimal value should be
            larger than $\alpha$, and the outer level is a single parameter
            optimization problem with variable $\lambda_0 \in \Lambda_0$. A
            brute-force method for solving the bilevel problem
            (\ref{equ:bileve}) is to solve a series of finite-horizon
            probabilistic minimization MDPs $\left\{\hat{\mathcal
     M}_2(\lambda_0): \lambda_0 \in \Lambda_0\right\}$ in
     (\ref{equ:promdp}), which is also equivalent to solving a
     finite-horizon standard MDP $\tilde{\mathcal M}_2$ in
     (\ref{equ:newmdp}) with augmented state space. Then we can obtain
     the optimal VaR using (\ref{equ:opt_value}) and an optimal policy
     using (\ref{equ:optimal_policy}). However, this naive method is
     computationally intractable since the size of $\Lambda_0$ increases
     exponentially with horizon $T$.

     Using the similar idea of Algorithm~\ref{alg:PI_steady} in
     Section~\ref{sec:svarmdp}, we also develop a policy iteration type
     algorithm to optimize the finite-horizon VaR MDP $\mathcal M_2$.
     With (\ref{equ:relation}) and the definition of finite-horizon
     probabilistic minimization MDP $\hat{\mathcal M}_2(\lambda_0)$ in
     (\ref{equ:promdp}), we derive the following policy improvement rule
     and the optimality condition of the finite-horizon VaR MDP $\mathcal
     M_2$.

     \begin{cor} \label{cor:imp}
     {\rm (\textbf{Policy Improvement Rule for Finite-Horizon VaR Maximization~MDPs})}

      Given a policy $u \in \mathcal U^{\rm DH}$, if there exists another policy $u' \in \mathcal U^{\rm DH}$ such that $F^{u'} (s_0,
     \VaR^u(s_0))<\alpha$ for some $s_0 \in \mathcal S$, then we have $\VaR^{u'}(s_0)>\VaR^u(s_0)$. If it holds that $F^{u'} (s_0, \VaR^u(s_0))<\alpha$ for any $s_0 \in \mathcal S$, then $u'$ is a strictly improved policy, i.e., $\VaR^{u'}(\cdot)>\VaR^u(\cdot)$.

     \end{cor}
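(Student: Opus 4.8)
The plan is to obtain the corollary as an immediate consequence of the duality relation (\ref{equ:relation}), which is itself a specialization of Lemma~\ref{lem_VaRmax}; the argument is the finite-horizon counterpart of the proof of Theorem~\ref{thm:imp_aver}. First I would fix an initial state $s_0 \in \mathcal S$ and take the target level to be $\lambda_0 := \VaR^u(s_0)$, which is well defined and, by the reformulation (\ref{equ:new_tvar}), belongs to $\Lambda_0$ since the accumulated reward $R_{0:T}$ is a discrete random variable supported on $\Lambda_0$.

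Next I would recall that, by definition (\ref{equ:pro}), $F^{u'}(s_0,\lambda_0) = \mathbbm{P}_{s_0}^{u'}(R_{0:T} \le \lambda_0)$. Hence the hypothesis $F^{u'}(s_0,\VaR^u(s_0)) < \alpha$ reads exactly as $\mathbbm{P}_{s_0}^{u'}(R_{0:T} \le \lambda_0) < \alpha$. Applying the equivalence (\ref{equ:relation}) with policy $u'$ and this choice of $\lambda_0$ yields $\VaR^{u'}(s_0) > \lambda_0 = \VaR^u(s_0)$, which is the claimed pointwise improvement at $s_0$.

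For the second assertion, if the strict inequality $F^{u'}(s_0,\VaR^u(s_0)) < \alpha$ holds for every $s_0 \in \mathcal S$, then the pointwise argument above applies at each state and gives $\VaR^{u'}(s_0) > \VaR^u(s_0)$ for all $s_0 \in \mathcal S$; by the definition of an optimal/improved policy in (\ref{equ:optu_def}), the policy $u'$ is therefore a strictly improved policy over $u$.

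There is essentially no obstacle here: the only points that deserve a line of care are that (\ref{equ:relation}) was stated for arbitrary $\lambda_0 \in \mathbb R$, so the substitution $\lambda_0 = \VaR^u(s_0)$ is legitimate, and that $\VaR^u(s_0) \in \Lambda_0$ by discreteness of $R_{0:T}$ — the latter fact is not needed for the corollary itself but becomes relevant when quantifying the size of each improvement in the convergence analysis of the associated policy iteration algorithm.
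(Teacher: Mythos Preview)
Your proposal is correct and follows essentially the same approach as the paper: the corollary is stated without a separate proof, being introduced as a direct consequence of the duality relation~(\ref{equ:relation}), exactly as you argue, and in complete analogy with the one-line proof of Theorem~\ref{thm:imp_aver} in the steady-state case. Your added remarks about $\VaR^u(s_0)\in\Lambda_0$ and the legitimacy of substituting $\lambda_0=\VaR^u(s_0)$ are sound but, as you note, not needed for the corollary itself.
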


     \begin{cor}\label{cor:opt}
     {\rm (\textbf{Optimality Condition for Finite-Horizon VaR Maximization MDPs})}

     A policy $u^* \in \mathcal U^{\rm DH}$ is optimal for the
     finite-horizon VaR MDP $\mathcal M_2$ in (\ref{equ:tvarmdp}) if and
     only if it holds that $F^* (s_0, \VaR^{u^*}(s_0))\ge\alpha$  for any $ s_0
     \in \mathcal S$.
     \end{cor}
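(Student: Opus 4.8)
The plan is to follow the template of the steady-state result Theorem~\ref{thm:opt_aver} almost verbatim, with the steady-state reward $R^u_\infty$ replaced by the accumulated reward $R_{0:T}$ and the relation (\ref{equ:relation_steady}) replaced by its finite-horizon analogue (\ref{equ:relation}). Both directions reduce to applying Lemma~\ref{lem_VaRmax} pointwise in $s_0$ to the random variable $R_{0:T}$; the only additional ingredient is the attainment statement furnished by Theorem~\ref{thm:equiva}.

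For necessity, I would assume $u^*$ is optimal for $\mathcal M_2$, so that $\VaR^u(s_0) \le \VaR^{u^*}(s_0)$ for every $u \in \mathcal U^{\rm RH}$ and every $s_0 \in \mathcal S$. Fixing $s_0$ and applying (\ref{eq_VaRmax-optim}) of Lemma~\ref{lem_VaRmax} with $X$ the law of $R_{0:T}$ under $u^*$ (so that $\lambda := \VaR^{u^*}(s_0) = \VaR_{\alpha}(X)$) and $Y$ the law of $R_{0:T}$ under $u$, the inequality $\VaR_{\alpha}(Y) \le \VaR_{\alpha}(X)$ yields $F^u(s_0, \VaR^{u^*}(s_0)) = \mathbbm P_{s_0}^{u}(R_{0:T} \le \VaR^{u^*}(s_0)) \ge \alpha$. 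Taking the infimum over $u \in \mathcal U^{\rm RH}$ gives $F^*(s_0, \VaR^{u^*}(s_0)) \ge \alpha$, and $s_0$ was arbitrary.

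For sufficiency, I would assume $F^*(s_0, \VaR^{u^*}(s_0)) \ge \alpha$ for every $s_0 \in \mathcal S$ and invoke Theorem~\ref{thm:equiva}(b) to obtain a policy $u' \in \mathcal U^{\rm DH}$ that is genuinely optimal for $\mathcal M_2$, i.e.\ $\VaR^{u'}(s_0) = \VaR^*(s_0)$ for all $s_0$. Fixing $s_0$, since $F^{u'}(s_0, \VaR^{u^*}(s_0)) \ge F^*(s_0, \VaR^{u^*}(s_0)) \ge \alpha$, the contrapositive of (\ref{equ:relation}), applied to the policy $u'$ with target $\lambda_0 = \VaR^{u^*}(s_0)$, gives $\VaR^{u'}(s_0) \le \VaR^{u^*}(s_0)$. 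Since $\VaR^{u^*}(s_0) \le \VaR^*(s_0) = \VaR^{u'}(s_0)$ always holds, I conclude $\VaR^{u^*}(s_0) = \VaR^*(s_0)$; as $s_0$ ranges over $\mathcal S$, $u^*$ is optimal for $\mathcal M_2$.

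The argument is essentially bookkeeping, so I do not anticipate a serious obstacle; the one point that needs attention is that $\VaR^*$ and $F^*$ are a supremum and an infimum over the infinite class $\mathcal U^{\rm RH}$, so in the sufficiency direction I must use the attainment of the VaR optimum (Theorem~\ref{thm:equiva}) to produce a genuine comparison policy $u'$ rather than merely an $\varepsilon$-optimal one — the discreteness of $\Lambda_0$ is precisely what makes this legitimate. The duality relations of Lemma~\ref{lem_VaRmax} themselves live at the level of random variables, hence are insensitive to the choice of policy class and transfer without change.
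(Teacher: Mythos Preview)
Your proof is correct and matches the paper's intended argument (Corollary~\ref{cor:opt} is stated without proof as the finite-horizon analogue of Theorem~\ref{thm:opt_aver}, whose own sufficiency direction is left to the reader with ``a similar argument''). One minor simplification: the sufficiency direction does not actually require the attainment result of Theorem~\ref{thm:equiva}, since $F^*(s_0,\VaR^{u^*}(s_0))\ge\alpha$ already gives $F^u(s_0,\VaR^{u^*}(s_0))\ge\alpha$ for \emph{every} $u\in\mathcal U^{\rm RH}$, and Lemma~\ref{lem_VaRmax} applied to each such $u$ yields $\VaR^u(s_0)\le\VaR^{u^*}(s_0)$ directly.
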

     %

     With Corollaries~\ref{cor:imp} and \ref{cor:opt}, we can propose a
     policy iteration type algorithm to efficiently solve the
     finite-horizon VaR MDP (\ref{equ:tvarmdp}), where the policy
     evaluation step computes the VaR of the current policy and the
     policy improvement step solves a probabilistic minimization MDP to
     find a better policy. Specifically, for a given policy $u \in
     \mathcal U^{\rm DH}$, we can evaluate the corresponding value
     $\VaR^u(s_0)$ with the definition in (\ref{equ:new_tvar}) by
     recursively using the Bellman operator (\ref{equ:dp}). Then we solve
     a finite-horizon probabilistic minimization MDP $\hat{\mathcal
     M}_2(\lambda_0)$ by letting $\lambda_0=\VaR^u(s_0)$, and obtain an
     optimal policy $u' \in \mathcal U^{\rm DH}$ and its optimal value
     function $F^*(s_0,\VaR^u(s_0))$. If $F^*(s_0,\VaR^u(s_0)) < \alpha$,
     then we update $u = u'$ and repeat this procedure, until
     $F^*(s_0,\VaR^u(s_0)) \ge \alpha$. The detailed procedure is stated
     in Algorithm~\ref{alg:PI_finte_1}. The algorithm's convergence is
     guaranteed by the following Theorem~\ref{thm:conver_finite}.

     \begin{algorithm}[h]
     \caption{Policy iteration type algorithm for finite-horizon VaR maximization MDP
        $\mathcal M_2$}\label{alg:PI_finte_1}
     \begin{algorithmic}[1]
        \Require {MDP parameters $\mathcal{M}
            = \langle \mathcal S, \mathcal A, (\mathcal A(s), s \in \mathcal S), P, r  \rangle$},  horizon $T$, initial state $s_0 \in \mathcal S$.
        \Ensure {An optimal policy and the maximum VaR.}
        \State \textbf{Initialization}:
        Arbitrarily choose a policy $u^{(0)} \in \mathcal U^{\rm DH}$, $k \gets 0$.
        \State \textbf{Policy Evaluation}: \\
        \quad Compute $F^{u^{(k)}}(s_0,\lambda_0)$ for all $\lambda_0 \in \Lambda_0$, by dynamic programming (\ref{equ:dp}) and (\ref{equ:eval_equal}).\\
        \quad Compute $\VaR^{u^{(k)}}(s_0)=\min\left\{\lambda_0 \in \Lambda_0: F^{u^{(k)}}(s_0,\lambda_0)\ge \alpha\right\}.$
        \State \textbf{Policy Improvement}: \\
        \quad Update target level: $\lambda_0^{(k)}=\VaR^{u^{(k)}}(s_0)$.\\
        \quad Solve the augmented MDP $\tilde{\mathcal M}_2$ (by dynamic programming (\ref{equ:opt_dp})): \\
        \quad\quad  $V_0^*(s_0,\lambda_0^{(k)}) = \min\limits_{\tilde u \in \tilde{\mathcal U}^{\rm DM}} V_0^{\tilde u}(s_0,\lambda_0^{(k)}).$\\
        \quad \quad $\tilde{u}^* = (\tilde{u}_0^*,\ldots,\tilde{u}_{T-1}^*) = \argmin\limits_{\tilde u \in \tilde{\mathcal U}^{\rm DM}} V_0^{\tilde u}(s_0,\lambda_0^{(k)})$. \\
        \quad Generate a new policy by Lemma \ref{lem:optpolicy}:\\
        \quad\quad $u'_{t}(s_0,a_0,\dots,s_t) := \tilde{u}_t^*(s_t,\lambda_0^{(k)}-\sum\limits_{\tau=0}^{t-1} r_\tau(s_\tau,a_\tau)),\quad t=0,\dots,T-1.$
        \While{$F^*(s_0,\lambda_0^{(k)})<\alpha$}
        \State\textbf{Parameters Update}: $u^{(k+1)} \gets u', k \gets k+1$, and go to line 2.
        \EndWhile
        \State \textbf{return}  $u^{(k)}$ and $\VaR^{u^{(k)}}(s_0)$.
     \end{algorithmic}
     \end{algorithm}

     \begin{thm}\label{thm:conver_finite}
     Algorithm \ref{alg:PI_finte_1} converges to an optimal policy of the
     finite-horizon VaR maximization MDP $\mathcal M_2$ in (\ref{equ:tvarmdp}) within
     finite iterations.
     \end{thm}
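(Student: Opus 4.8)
The plan is to run the same argument that established Theorem~\ref{thm:conver_steady}, now with its finite-horizon counterparts: the policy improvement rule in Corollary~\ref{cor:imp}, the optimality condition in Corollary~\ref{cor:opt}, and the equivalence between $\hat{\mathcal M}_2(\lambda_0)$ and the augmented MDP $\tilde{\mathcal M}_2$ from Lemmas~\ref{lem:dp} and~\ref{lem:optpolicy}. First I would show that the sequence of target levels $\langle \lambda_0^{(k)} \rangle_{k \ge 0}$ generated in the policy improvement step is strictly increasing as long as the while-loop keeps executing. By construction $\lambda_0^{(k)} = \VaR^{u^{(k)}}(s_0)$, and the updated policy $u^{(k+1)}$ is the deterministic history-dependent policy obtained from the optimal deterministic Markov policy $\tilde{u}^*$ of $\tilde{\mathcal M}_2$ through the construction in Lemma~\ref{lem:optpolicy}. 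Chaining Lemma~\ref{lem:optpolicy}(a), the optimality of $\tilde u^*$ for $\tilde{\mathcal M}_2$, and Lemma~\ref{lem:optpolicy}(b) gives $F^{u^{(k+1)}}(s_0,\lambda_0^{(k)}) = V_0^{\tilde u^*}(s_0,\lambda_0^{(k)}) = V_0^*(s_0,\lambda_0^{(k)}) = F^*(s_0,\lambda_0^{(k)})$. Whenever the loop does not stop we have $F^*(s_0,\lambda_0^{(k)}) < \alpha$, so the duality relation (\ref{equ:relation}) (equivalently Corollary~\ref{cor:imp}) yields $\VaR^{u^{(k+1)}}(s_0) > \lambda_0^{(k)} = \VaR^{u^{(k)}}(s_0)$, i.e. $\lambda_0^{(k+1)} > \lambda_0^{(k)}$.

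Next I would invoke finiteness. Since $\mathcal K$ and the horizon $T$ are finite, $\Lambda_0$ is a finite subset of $\mathbb R$, and by (\ref{equ:new_tvar}) every $\VaR^{u}(s_0)$ lies in $\Lambda_0$; hence each $\lambda_0^{(k)} \in \Lambda_0$. A strictly increasing sequence in the finite set $\Lambda_0$ has only finitely many terms, so the while-loop terminates after some finite number $n$ of iterations and returns $u^{(n)} \in \mathcal U^{\rm DH}$. Termination means that the stopping test fails at iteration $n$, i.e. $F^*(s_0, \VaR^{u^{(n)}}(s_0)) \ge \alpha$. By the optimality condition of Corollary~\ref{cor:opt}, this certifies $\VaR^{u^{(n)}}(s_0) = \VaR^*(s_0)$, so $u^{(n)}$ is an optimal policy and the algorithm has converged within finitely many iterations.

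Conceptually the whole argument is carried by results already proved, so most of the work is bookkeeping; the one step that genuinely requires care is the identity $F^{u^{(k+1)}}(s_0,\lambda_0^{(k)}) = F^*(s_0,\lambda_0^{(k)})$, because this is precisely where the reduction of $\hat{\mathcal M}_2(\lambda_0)$ to the standard augmented MDP $\tilde{\mathcal M}_2$ (Lemma~\ref{lem:optpolicy}) is needed to convert a strictly decreased \emph{optimal probability} into a strictly increased VaR. A secondary subtlety worth flagging is that Corollary~\ref{cor:opt} is stated for all initial states whereas Algorithm~\ref{alg:PI_finte_1} fixes a single $s_0$; I would either phrase the conclusion as optimality for the given $s_0$, or note that the Markov policy $\tilde u^*$ solving $\tilde{\mathcal M}_2$ is simultaneously optimal at every augmented initial state, so the argument in fact delivers optimality uniformly in $s_0$.
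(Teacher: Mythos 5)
Your proposal is correct and follows essentially the same route as the paper's proof: strict improvement of $\VaR^{u^{(k)}}(s_0)$ via Corollary~\ref{cor:imp} whenever the loop continues, finite termination, and then optimality of the returned policy via Corollary~\ref{cor:opt}. The only cosmetic difference is that you derive termination from the finiteness of $\Lambda_0$ (the value set containing every $\lambda_0^{(k)}$), whereas the paper appeals to the finiteness of the policy space $\mathcal U^{\rm DH}$; both are immediate, and your explicit identity $F^{u^{(k+1)}}(s_0,\lambda_0^{(k)})=F^*(s_0,\lambda_0^{(k)})$ via Lemma~\ref{lem:optpolicy} together with the remark about the fixed initial state $s_0$ are sound refinements of the same argument.
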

     \begin{proof}
     The proof is similar to that of Algorithm~\ref{alg:PI_steady} in
     Theorem~\ref{thm:conver_steady}. For each given $s_0 \in \mathcal
     S$, the update condition $F^*(s_0,\lambda_0^{(k)})<\alpha$ in
     line~12 indicates that $\VaR^{u^{(k+1)}}(s_0)>\VaR^{u^{(k)}}(s_0)$
     by Corollary~\ref{cor:imp}. That is, the VaR in each iteration is
     strictly improved. With Theorem~\ref{thm:equiva}, the policy search space is limited to the deterministic history-dependent policy space
     $\mathcal U^{\rm DH}$ which is finite, the algorithm must stop after
     a finite number of iterations. The algorithm stops when the
     condition $F^*(s_0,\lambda_0^{(k)}) \geq \alpha$ in line~12 holds,
     which indicates that the converged policy $u^{(k)}$ is optimal by
     Corollary~\ref{cor:opt}.
     \end{proof}

     Algorithm~\ref{alg:PI_finte_1} can find the optimal policy $u^*$
     with a given initial state $s_0$. For different $s_0 \in \mathcal
     S$, we can repeat Algorithm~\ref{alg:PI_finte_1} to find the
     associated $u^*$ where some intermediate results can be shared for
     saving computations. Algorithms \ref{alg:PI_steady} and
     \ref{alg:PI_finte_1} have a form similar to the classical policy
     iteration in MDP theory. They are reasonably expected to have a fast
     convergence speed in most scenarios, which is widely observed in
     classical policy iteration. We will demonstrate their convergence
     behaviors through the later experiments in Section~\ref{sec:numer}.

     \begin{remark}
     The finite-horizon VaR MDP has also been studied by
     \cite{li2022quantile}, which establishes a decomposition form of VaR
     of accumulated rewards and derives a value iteration type algorithm
     to find optimal policies. As a comparison, we establish a specified
     equivalence between VaR maximization MDP and probabilistic minimization MDP,
     and propose a policy iteration type algorithm. Our policy iteration
     algorithm is an effective supplement to the value iteration
     algorithm for solving VaR MDPs.
     \end{remark}

     \section{Extensions}\label{sec:exten}
     The previous sections study the VaR maximization MDPs, which
     correspond to maximizing the guaranteed rewards at a given
     confidence level. When we consider costs instead of rewards, it is
     reasonable to study VaR minimization problems. In this section, we
     extend our optimization approaches to VaR minimization in both
     steady-state and finite-horizon MDPs. Note that we use the same
     notations as those in Sections~\ref{sec:svarmdp} and
     \ref{sec:tvarmdp} unless otherwise declared, and $r(s,a)$ represents
     the instantaneous cost in this section.

     \subsection{Steady-State VaR Minimization MDPs} \label{sec:smdp_min}
     In this subsection, we study steady-state VaR minimization MDPs,
     where the objective is to minimize the VaR of instantaneous costs when the
     system tends to be steady over an infinite horizon, i.e.,
     \begin{flalign}\label{equ:svarmdp_min}
     &\mathcal M_3: \hspace{5cm} \VaR^*:= \inf\limits_{u \in \mathcal
        U^{\rm RS}}\VaR^u.&
        \end{flalign}
        We can use the similar framework in
        Section~\ref{sec:svarmdp} to study the above problem, except that
        Lemma~\ref{lem_VaRmax} should be replaced by Lemma~\ref{lem_VaRmin}.
        By taking $Y=R^u_{\infty}$ in Lemma~\ref{lem_VaRmin}, we can obtain
        the equivalence between steady-state VaR and steady-state
        probability, i.e.,
         \begin{equation}\label{equ:relation_steady_min}
            \mathbbm P(R^u_{\infty} \le \lambda_{-}(\Lambda)) \ge \alpha \Longleftrightarrow
            \VaR^u < \lambda, \qquad \forall u \in \mathcal U^{\rm RS},\lambda
            \in \mathbb R.
        \end{equation}
         Thus, we focus on maximizing $\mathbbm P(R^u_{\infty} \le \lambda_{-}(\Lambda))$
        such that the left-hand-side of (\ref{equ:relation_steady_min}) can
        be satisfied and $\VaR^u$ can be reduced. We formulate the probabilistic maximization
        MDP as
     \begin{flalign}\label{equ:sprobmdp_min}
     &\hat{\mathcal{M}}_3(\lambda): \hspace{5cm} F^*(\lambda)
     :=\sup\limits_{u\in \mathcal U^{\rm {RS}}}F^u(\lambda),&
     \end{flalign}
     where $F^u(\lambda) := \mathbbm P(R^u_{\infty} \le \lambda)$. This is a maximization counterpart of the
     probabilistic minimization MDP $\hat{\mathcal{M}}_1(\lambda)$
     (\ref{equ:promdp_s}) in Section~\ref{sec:svarmdp}, which can also be
     solved as a standard average maximization MDP with reward function
     $r_\lambda(s,a):=\mathbb{I}\left\{r(s,a) \le \lambda\right\}$.
     Similar to Theorem~\ref{thm:equiva_s}, we derive the following
     theorem about the equivalence of optimal value and optimal policy
     between (\ref{equ:svarmdp_min}) and (\ref{equ:sprobmdp_min}).

     \begin{thm}\label{thm:equiva_min_s} {\rm (\textbf{Optimization Equivalence between $\mathcal M_3$ and
            $\hat{\mathcal M}_3(\lambda)$})}

     \begin{itemize}
        \item[(a)] The optimal value of the steady-state VaR minimization MDP $\mathcal M_3$ in (\ref{equ:svarmdp_min})
        is given by
        \begin{equation}\label{equ:opt_value_min_s}
            \VaR^* = \min\limits_{\lambda \in \Lambda}\left\{\lambda:
            \max\limits_{u \in \mathcal U^{\rm DS}}F^u(\lambda) \ge
            \alpha\right\}.
        \end{equation}
        \item[(b)] Suppose ${u}^*$ is an optimal deterministic stationary policy of the probabilistic maximization MDP $\hat{\mathcal M}_3(\VaR^*)$, then
        $u^*$ is also optimal for $\mathcal M_3$ .
     \end{itemize}
     \end{thm}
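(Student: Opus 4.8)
The plan is to transcribe the proof of Theorem~\ref{thm:equiva_s} to the minimization setting, using Lemma~\ref{lem_VaRmin} in place of Lemma~\ref{lem_VaRmax}. Write $\lambda^* := \min_{\lambda \in \Lambda}\{\lambda: \max_{u \in \mathcal U^{\rm DS}}F^u(\lambda) \ge \alpha\}$ for the right-hand side of (\ref{equ:opt_value_min_s}); this is well defined since $F^u(\cdot)$ is non-decreasing, $\max_{u \in \mathcal U^{\rm DS}}F^u(\max\Lambda)=1\ge\alpha$, and $\Lambda$ is finite. I will use two facts already available in the excerpt: (i) by (\ref{equ:new_svar}), $R^u_\infty$ is discrete with support in $\Lambda$, so $\VaR^u \in \Lambda$ and $F^u(\VaR^u)=\mathbbm P(R^u_\infty\le\VaR^u)\ge\alpha$ for every $u\in\mathcal U^{\rm RS}$; and (ii) $\hat{\mathcal M}_3(\lambda)$ is a standard long-run average MDP with reward $r_\lambda$, so $\sup_{u\in\mathcal U^{\rm RS}}F^u(\lambda)=\max_{u\in\mathcal U^{\rm DS}}F^u(\lambda)$ and the maximum is attained by a deterministic stationary policy.

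For part~(a), I would establish the two inequalities $\VaR^u\ge\lambda^*$ for all $u\in\mathcal U^{\rm RS}$ and $\VaR^*\le\lambda^*$, which together give $\VaR^*=\lambda^*$, i.e.\ (\ref{equ:opt_value_min_s}). For the first, argue by contradiction: if $\VaR^u<\lambda^*$ for some $u$, then since $\VaR^u\in\Lambda$, minimality of $\lambda^*$ forces $\max_{v\in\mathcal U^{\rm DS}}F^v(\VaR^u)<\alpha$, hence $F^u(\VaR^u)\le\sup_{v\in\mathcal U^{\rm RS}}F^v(\VaR^u)=\max_{v\in\mathcal U^{\rm DS}}F^v(\VaR^u)<\alpha$, contradicting fact~(i); taking the infimum over $u$ yields $\VaR^*\ge\lambda^*$. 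For the second, the defining property of $\lambda^*$ produces a policy $\bar u\in\mathcal U^{\rm DS}$ with $F^{\bar u}(\lambda^*)=\max_{u\in\mathcal U^{\rm DS}}F^u(\lambda^*)\ge\alpha$, i.e.\ $\mathbbm P(R^{\bar u}_\infty\le\lambda^*)\ge\alpha$; then the definition of VaR in (\ref{equ:svar}) gives $\VaR^{\bar u}\le\lambda^*$, so $\VaR^*\le\VaR^{\bar u}\le\lambda^*$. (Alternatively, the first inequality follows from the contrapositive of (\ref{equ:relation_steady_min}) at $\lambda=\lambda^*$ together with $\mathbbm P(Y\le\lambda_{-}(\mathcal Y))=\mathbbm P(Y<\lambda)$, but the discreteness argument is shorter.)

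For part~(b), let $u^*$ be an optimal deterministic stationary policy of $\hat{\mathcal M}_3(\VaR^*)$, so $F^{u^*}(\VaR^*)=\max_{u\in\mathcal U^{\rm DS}}F^u(\VaR^*)$. By part~(a) we have $\VaR^*=\lambda^*$, and the defining property of $\lambda^*$ gives $\max_{u\in\mathcal U^{\rm DS}}F^u(\lambda^*)\ge\alpha$, so $\mathbbm P(R^{u^*}_\infty\le\VaR^*)=F^{u^*}(\VaR^*)\ge\alpha$; by the definition of $\VaR^{u^*}$ this forces $\VaR^{u^*}\le\VaR^*$, while $\VaR^{u^*}\ge\inf_u\VaR^u=\VaR^*$ is immediate from (\ref{equ:svarmdp_min}). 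Hence $\VaR^{u^*}=\VaR^*$ and $u^*$ is optimal for $\mathcal M_3$.

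I do not anticipate a genuine obstacle: the argument is a mechanical adaptation of Theorem~\ref{thm:equiva_s}. The one point requiring care, and where the minimization case genuinely differs from the maximization case, is that part~(b) uses the target $\VaR^*$ directly rather than its left predecessor: because we minimize VaR, the bound $F^{u^*}(\VaR^*)\ge\alpha$ already pins $\VaR^{u^*}$ from above at the optimum, whereas in the maximization setting (Theorem~\ref{thm:equiva_s}(b)) one needed the predecessor shift $\lambda^*_{-}(\Lambda)$ to pin the optimal VaR from below. I would also be careful to invoke fact~(ii), so that replacing $\sup$ over $\mathcal U^{\rm RS}$ by $\max$ over $\mathcal U^{\rm DS}$ in (\ref{equ:opt_value_min_s}) is legitimate and the policy $u^*$ in part~(b) is guaranteed to exist.
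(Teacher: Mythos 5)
Your proposal is correct and follows essentially the same route as the paper's proof: both establish the two inequalities $\lambda^*\le\VaR^u$ for all $u\in\mathcal U^{\rm RS}$ and $\lambda^*\ge\VaR^{u^*}$, both invoke the optimality of deterministic stationary policies in the average-reward reformulation to replace $\sup$ over $\mathcal U^{\rm RS}$ by $\max$ over $\mathcal U^{\rm DS}$, and both exploit that $F^{u^*}(\VaR^*)\ge\alpha$ directly pins $\VaR^{u^*}$ from above without any left-predecessor shift. Your closing observation about why the minimization case needs no predecessor shift matches the remark the authors make immediately after the theorem.
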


     \begin{proof}
        For notational simplicity,  we denote $\lambda^* := \min\limits_{\lambda \in \Lambda}\left\{\lambda:
        \max\limits_{u \in \mathcal U^{\rm DS}}F^u(\lambda) \ge
        \alpha\right\}$. Let $\hat u^* \in \mathcal U^{\rm DS}$ be the optimal policy of the probabilistic maximization MDP $\hat{\mathcal M}_3(\lambda^*)$. With these definitions, to prove (a) and (b), it is sufficient to show that
        \begin{equation}\label{equ:opt_ge_min_s}
            \lambda^* \le \VaR^u, \quad\forall u \in \mathcal U^{\rm RS},
        \end{equation}
        \begin{equation}\label{equ:opt_equ_min_s}
            \lambda^* \ge \VaR^{\hat u^*}.
        \end{equation}
        First, we prove (\ref{equ:opt_ge_min_s}). For any given $u \in \mathcal U^{\rm RS}$, it follows from the definition of $\VaR^u$, we have
        \begin{equation*}
            F^u(\VaR^u) \ge \alpha,
        \end{equation*}
     which directly leads to
     \begin{equation}
        \nonumber
        \max\limits_{u' \in \mathcal U^{\rm DS}}F^{u'}(\VaR^u)=\max\limits_{u' \in \mathcal U^{\rm RS}}F^{u'}(\VaR^u) \ge \alpha,
     \end{equation}
    where the first equality is guaranteed by the optimality of deterministic stationary policies in probabilistic maximization MDP $\hat{\mathcal M}_3(\VaR^u)$. By the definition of $\lambda^*$, we directly have $\lambda^* \le \VaR^u$.

    Next, we prove (\ref{equ:opt_equ_min_s}). The definitions of $\lambda^*$ and $\hat u^*$ imply that
    \begin{equation}
        \nonumber
    F^{\hat u^*}(\lambda^*)=\max\limits_{u \in \mathcal U^{\rm DS}}F^{u}(\lambda^*) \ge \alpha,
    \end{equation}
 which directly leads to $\lambda^* \ge \VaR^{\hat u^*}$ by combining with the definition of $\VaR^{\hat u^*}$.
     \end{proof}

      Different from Theorem~\ref{thm:equiva_s}, the optimal policy for steady-state VaR minimization MDP is derived by solving a probabilistic maximization MDP $\hat{\mathcal M}_3(\VaR^*)$ with target level $\VaR^*$ instead of its left predecessor. Theorem~\ref{thm:equiva_min_s} indicates that the minimum $\VaR^*$
        can be attained by a deterministic stationary policy. In what follows, we restrict to deterministic stationary policy space $\mathcal U^{\rm DS}$.
     In order to efficiently tackle the bilevel optimization problem (\ref{equ:opt_value_min_s}), we derive the following theorem, which serves as a foundation for policy improvement and optimal policy identification by solving the inner problem
     $\hat{\mathcal M}_3(\lambda)$.   

     \begin{thm} \label{thm:imp_mins}
     {\rm(\textbf{Policy Improvement and Optimality Rules for $\mathcal
            M_3$})}

     Given a policy $u \in \mathcal U^{{\rm DS}}$, and let $\VaR^u_{-}(\Lambda)$ be the left predecessor of $\VaR^u$ in $\Lambda$,
     \begin{itemize}
        \item[(a)] if there exists another policy $u' \in \mathcal U^{\rm {DS}}$
        such that $ F^{u'}(\VaR^u_{-}(\Lambda)) \ge\alpha$, then $\VaR^{u'}<\VaR^u$,
        i.e., $u'$ is a strictly improved policy;

        \item[(b)] if and only if $ F^*(\VaR^u_{-}(\Lambda)) < \alpha$, the policy $u$ is optimal for $\mathcal
        M_3$.
     \end{itemize}
     \end{thm}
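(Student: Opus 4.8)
The plan is to deduce both parts directly from the steady-state duality relation (\ref{equ:relation_steady_min}) --- the instantiation of Lemma~\ref{lem_VaRmin} with $Y=R^{u}_{\infty}$ --- together with three auxiliary facts already at hand: (i) for any stationary policy the variable $R^{u}_{\infty}$ is supported on $\Lambda$, so $\VaR^{u}\in\Lambda$ and $\VaR^{u}_{-}(\Lambda)$ is exactly the quantity appearing in (\ref{equ:relation_steady_min}) when the free parameter $\lambda$ is set to $\VaR^{u}$; (ii) $F^{*}(\lambda)=\max_{u'\in\mathcal U^{\rm DS}}F^{u'}(\lambda)$ is attained over the \emph{finite} set $\mathcal U^{\rm DS}$, by optimality of deterministic stationary policies in the probabilistic maximization MDP $\hat{\mathcal M}_3(\lambda)$; and (iii) the infimum defining $\mathcal M_3$ in (\ref{equ:svarmdp_min}) is attained in $\mathcal U^{\rm DS}$, by Theorem~\ref{thm:equiva_min_s}.

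For part (a) I would set $\lambda:=\VaR^{u}\in\Lambda$, so that $\lambda_{-}(\Lambda)=\VaR^{u}_{-}(\Lambda)$, and apply (\ref{equ:relation_steady_min}) with $u'$ playing the role of the generic policy: the hypothesis $F^{u'}(\VaR^{u}_{-}(\Lambda))\ge\alpha$ is \emph{equivalent} to $\VaR^{u'}<\lambda=\VaR^{u}$, which is precisely the claimed strict improvement.

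For part (b), necessity: if $u$ is optimal for $\mathcal M_3$ then $\VaR^{u'}\ge\VaR^{u}$ for every $u'\in\mathcal U^{\rm DS}$, that is, $\VaR^{u'}<\VaR^{u}$ fails, so the equivalence (\ref{equ:relation_steady_min}) forces $F^{u'}(\VaR^{u}_{-}(\Lambda))<\alpha$ for every such $u'$; since $\mathcal U^{\rm DS}$ is finite, the maximum over $u'$ remains strictly below $\alpha$, giving $F^{*}(\VaR^{u}_{-}(\Lambda))<\alpha$. For sufficiency I would argue by contradiction: if $u$ were not optimal, by fact (iii) there exists $u'\in\mathcal U^{\rm DS}$ with $\VaR^{u'}<\VaR^{u}$, hence $F^{u'}(\VaR^{u}_{-}(\Lambda))\ge\alpha$ by (\ref{equ:relation_steady_min}), and therefore $F^{*}(\VaR^{u}_{-}(\Lambda))\ge F^{u'}(\VaR^{u}_{-}(\Lambda))\ge\alpha$, contradicting the hypothesis; optimality over all of $\mathcal U^{\rm RS}$ then follows again from fact (iii).

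The derivation is essentially mechanical once (\ref{equ:relation_steady_min}) is in place. The only real subtlety --- and thus the main obstacle, such as it is --- is the strictness in part (b): passing from ``$F^{u'}(\VaR^{u}_{-}(\Lambda))<\alpha$ for each $u'$'' to ``$F^{*}(\VaR^{u}_{-}(\Lambda))<\alpha$'' genuinely uses the finiteness of $\mathcal U^{\rm DS}$, since a supremum of values each below $\alpha$ need not itself be below $\alpha$; this is why the whole analysis is carried out over $\mathcal U^{\rm DS}$ rather than $\mathcal U^{\rm RS}$. One should also verify that the left-predecessor bookkeeping (working with $\VaR^{u}_{-}(\Lambda)$ in place of $\VaR^{u}$) is handled exactly as in Lemma~\ref{lem_VaRmin}, since this is the feature that distinguishes the minimization case here from the maximization case of Theorem~\ref{thm:imp_aver}.
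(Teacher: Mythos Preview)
Your proposal is correct and follows essentially the same approach as the paper: part (a) is obtained directly from (\ref{equ:relation_steady_min}) by setting $\lambda=\VaR^{u}$, and part (b) follows by applying the equivalence in both directions over $\mathcal U^{\rm DS}$ and taking the maximum. Your treatment is in fact slightly more careful than the paper's---you explicitly flag why the strict inequality survives the maximum (finiteness of $\mathcal U^{\rm DS}$) and why optimality in $\mathcal U^{\rm DS}$ suffices for optimality in $\mathcal U^{\rm RS}$ (Theorem~\ref{thm:equiva_min_s})---whereas the paper simply writes ``the sufficiency can be also proved by using the same argument.''
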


    \begin{proof}
    (a) The result follows directly  from
    (\ref{equ:relation_steady_min}) by letting $\lambda=\VaR^u$.

    (b) We first prove the necessity. Suppose $u$ is an optimal policy
    of the steady-state VaR minimization MDP $\mathcal M_3$ in
    (\ref{equ:svarmdp_min}), then we have
    \begin{equation}\nonumber
        \VaR^{u} \le \VaR^{u'}, \qquad \forall u' \in \mathcal U^{\rm DS}.
    \end{equation}
    According to (\ref{equ:relation_steady_min}), we obtain
    \begin{equation}\nonumber
        \mathbbm P(R^{u'}_{\infty} \le \VaR^u_{-}(\Lambda)) < \alpha  ,\quad\forall u'
        \in \mathcal U^{\rm DS}.
    \end{equation}
    Therefore, the maximum probability $\sup\limits_{u'\in \mathcal
        U^{\rm {DS}}} \mathbbm P(R^{u'}_{\infty} \le \VaR^u_{-}(\Lambda)) =
    F^*(\VaR^u_{-}(\Lambda)) < \alpha$.

    The sufficiency can be also proved by using the same argument.
\end{proof}

     According to Theorem~\ref{thm:imp_mins}, we can iteratively improve
     the policy by solving a probabilistic maximization MDP
     $\hat{\mathcal M}_3(\lambda)$, finally converging to an optimal
     policy. Thus, we can also develop an algorithm for solving the
     steady-state VaR minimization MDP by modifying several lines in
     Algorithm~\ref{alg:PI_steady} as follows.
     \begin{itemize}
 \item \textbf{Line 6:} Replace with ``Update target level: $\lambda_k = \VaR^{u^{(k)}}_{-}(\Lambda)$, where $\VaR^{u^{(k)}}_{-}(\Lambda)$ is the left predecessor of $\VaR^{u^{(k)}}$ in $\Lambda$".
     \item \textbf{Line 7:} Replace with ``Solve the probabilistic maximization MDP $\hat{\mathcal M}_3(\lambda_k)$ in
     (\ref{equ:sprobmdp_min})".
     \item \textbf{Line 8:} Substitute ``$\min\limits_{u\in \mathcal{U}^{\rm DS}}$" with ``$\max\limits_{u\in \mathcal{U}^{\rm DS}}$", and ``$\argmin\limits_{u\in \mathcal{U}^{\rm DS}}$" with ``$\argmax\limits_{u\in \mathcal{U}^{\rm
     DS}}$".
     \item \textbf{Line 9:} Replace with ``\textbf{While} $F^*(\lambda_k) \ge
     \alpha$".
     \end{itemize}

     Similarly, we can also prove that the above modified algorithm can
     converge to an optimal policy of the steady-state VaR minimization
     MDP $\mathcal M_3$ within finite iterations.

     \subsection{Finite-Horizon VaR Minimization MDPs} \label{sec:tmdp_min}
     We aim to minimize the VaR of accumulated costs over a finite
     horizon, i.e.,
     \begin{flalign}\label{equ:tvarmdp_min}
     &\mathcal M_4: \hspace{4cm} \VaR^*(s_0) := \inf\limits_{u \in
        \mathcal U^{\rm RH}}   \VaR^u(s_0), \qquad s_0 \in \mathcal{S}.&
        \end{flalign}
         Similarly, we can
        rewrite (\ref{equ:tvarmdp_min}) as
        \begin{equation*}
     \inf\limits_{u \in \mathcal U^{\rm RH}} \min\limits_{\lambda_0 \in
        \Lambda_0}\left\{\lambda_0: \mathbbm P^u_{s_0}(R_{0:T} \le
     \lambda_0)\ge\alpha\right\}, \quad
     s_0 \in \mathcal{S}.
     \end{equation*}
     According to Lemma \ref{lem_VaRmin}, for a policy $u\in \mathcal U^{\rm RH}$, let $\VaR^u_-(s_0, \Lambda_0)$ be the left predecessor of $\VaR^u(s_0)$ in $\Lambda_0$, we focus on maximizing $\mathbbm P^{u'}_{s_0}(R_{0:T} \le \VaR^u_-(s_0, \Lambda_0))$
     such that the VaR value of the new policy $u'$ can be reduced. Thus, we formulate the finite-horizon probabilistic maximization
     MDP with target value $\lambda_0 \in \mathbb R$ as
     \begin{flalign}\label{equ:promdp_max}
     &\hat{\mathcal M}_4(\lambda_0): \hspace{4cm} F^*(s_0, \lambda_0) :=
     \sup\limits_{u\in \mathcal U^{\rm {RH}}}F^u(s_0, \lambda_0), \qquad
     s_0 \in \mathcal{S}.&
     \end{flalign}
     Similarly, $\hat{\mathcal
     M}_4(\lambda_0)$ is not a standard MDP and it can be equivalently
     solved by an augmented MDP $\tilde{\mathcal M}_4$ with tuple
     $\langle
     \tilde{\mathcal{S}},\tilde{\mathcal{A}},(\tilde{\mathcal{A}}(\tilde{s}),
     \tilde{s} \in \tilde{\mathcal{S}}),\tilde P,\tilde r\rangle$
     maximizing the expectation of the finite-horizon accumulated
     rewards, i.e.,
     \begin{flalign}\label{equ:newmdp-min}
     &\tilde{\mathcal M}_4: \hspace{4cm} V_0^*(s_0,\lambda_0) =
     \sup\limits_{\tilde{u} \in \tilde{\mathcal U}^{\rm RH}}
     V_0^{\tilde{u}}(s_0,\lambda_0), \qquad (s_0,\lambda_0) \in
     \tilde{\mathcal S}.&
     \end{flalign}
     Thus, we can use the classical backward dynamic programming to solve
     $\tilde{\mathcal M}_4$ and also prove that $V_0^*(s_0,\lambda_0) =
     F^*(s_0, \lambda_0)$. The following theorem establishes the
     equivalence between the finite-horizon VaR minimization MDP
     $\mathcal M_4$ and the augmented MDP $\tilde{\mathcal{M}}_4$.

     \begin{thm}\label{thm:equiva_min}
     {\rm (\textbf{Optimization Equivalence between $\mathcal M_4$ and
            $\tilde{\mathcal M}_4$})}

     \begin{itemize}
        \item[(a)] The optimal value function of the finite-horizon VaR minimization MDP $\mathcal M_4$ in
        (\ref{equ:tvarmdp_min})~ is given by
        \begin{equation}\label{equ:opt_value2}
            \VaR^*(s_0) = \min\limits_{\lambda_0 \in \Lambda_0}\left\{\lambda_0:
            \max\limits_{\tilde{u} \in \tilde{\mathcal U}^{\rm DM}}
            V_0^{\tilde{u}}(s_0,\lambda_0) \ge \alpha\right\}, \qquad s_0 \in
            \mathcal S.
        \end{equation}

        \item[(b)]Suppose $\tilde{u}^*$ is an optimal deterministic Markov policy of the augmented MDP $\tilde{\mathcal M_4}$ in (\ref{equ:newmdp-min}),
        then there exists an optimal deterministic history-dependent policy
        $u^*=(u^*_{0}, \ldots, u^*_{T-1}) \in \mathcal U^{\rm {DH}}$ for
        $\mathcal M_4$, given by
        \begin{equation}\label{equ:optimal_policy2}
            u^*_{t}(s_0,a_0,\ldots,s_t) = \tilde{u}_t^*(s_t,
            \VaR^*(s_0)-\sum\limits_{\tau=0}^{t-1} r(s_\tau,a_\tau)), \qquad
            t=0,\ldots,T-1.
        \end{equation}
     \end{itemize}
     \end{thm}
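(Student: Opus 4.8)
The plan is to transfer the argument behind Theorem~\ref{thm:equiva_min_s} to the finite-horizon setting, using the augmented MDP $\tilde{\mathcal M}_4$ as the device that replaces direct dynamic programming on $\hat{\mathcal M}_4$, in exact parallel with the way Theorem~\ref{thm:equiva} treats the maximization case via $\tilde{\mathcal M}_2$. Fix $s_0 \in \mathcal S$ and write $\lambda^* := \min\{\lambda_0 \in \Lambda_0 : \max_{\tilde u \in \tilde{\mathcal U}^{\rm DM}} V_0^{\tilde u}(s_0,\lambda_0) \ge \alpha\}$ for the right-hand side of (\ref{equ:opt_value2}); this set is nonempty (for $\lambda_0 = \max\Lambda_0$ every policy has $F^u(s_0,\lambda_0) = 1$), so $\lambda^* \in \Lambda_0$ is well defined. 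First I would record the maximization analogues of Lemmas~\ref{lem:dp} and \ref{lem:optpolicy}: $\tilde{\mathcal M}_4$ is a standard finite-horizon expected-reward MDP, so backward dynamic programming produces an optimal deterministic Markov policy; the per-policy identity $V_0^{\tilde u}(s_0,\lambda_0) = F^u(s_0,\lambda_0)$ for the history-dependent policy $u$ matched to $\tilde u$ is the same computation as the one preceding (\ref{equ:newmdp}) (a per-policy statement, hence indifferent to whether we maximize or minimize over policies); and combining these gives $\max_{\tilde u\in\tilde{\mathcal U}^{\rm DM}} V_0^{\tilde u}(s_0,\lambda_0) = V_0^*(s_0,\lambda_0) = F^*(s_0,\lambda_0) = \sup_{u\in\mathcal U^{\rm RH}} F^u(s_0,\lambda_0)$, with the history-dependent reconstruction of an optimal deterministic Markov policy of $\tilde{\mathcal M}_4$ being optimal for $\hat{\mathcal M}_4(\lambda_0)$. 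Consequently $\lambda^* = \min\{\lambda_0 \in \Lambda_0 : \sup_{u\in\mathcal U^{\rm RH}} F^u(s_0,\lambda_0) \ge \alpha\}$.

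With this reduction, part (a) follows from two inequalities. For $\lambda^* \le \VaR^*(s_0)$: every $u \in \mathcal U^{\rm RH}$ satisfies $F^u(s_0,\VaR^u(s_0)) \ge \alpha$ by the discreteness of $R_{0:T}$ and (\ref{equ:new_tvar}), so $\sup_{u'} F^{u'}(s_0,\VaR^u(s_0)) \ge \alpha$, whence $\lambda^* \le \VaR^u(s_0)$ by minimality in the definition of $\lambda^*$; taking the infimum over $u$ gives $\lambda^* \le \VaR^*(s_0)$. For $\VaR^*(s_0) \le \lambda^*$: by definition of $\lambda^*$ we have $\sup_{u\in\mathcal U^{\rm RH}} F^u(s_0,\lambda^*) = V_0^*(s_0,\lambda^*) \ge \alpha$; letting $\tilde u^*$ be an optimal deterministic Markov policy of $\tilde{\mathcal M}_4$ and $u^*$ the deterministic history-dependent policy built from $\tilde u^*$ via the formula (\ref{equ:optimal_policy2}) with initial budget $\lambda_0 = \lambda^*$, the maximization analogue of Lemma~\ref{lem:optpolicy}(b) gives $F^{u^*}(s_0,\lambda^*) = F^*(s_0,\lambda^*) \ge \alpha$, hence $\VaR^{u^*}(s_0) \le \lambda^*$ by (\ref{equ:new_tvar}), and therefore $\VaR^*(s_0) \le \VaR^{u^*}(s_0) \le \lambda^*$. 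Combining the two inequalities proves (a). Part (b) is then a byproduct: since $\lambda^* = \VaR^*(s_0)$, the policy $u^*$ just constructed (with budget $\VaR^*(s_0)$, i.e.\ exactly the policy displayed in (\ref{equ:optimal_policy2})) satisfies $\VaR^{u^*}(s_0) \le \VaR^*(s_0)$, while $\VaR^*(s_0) = \inf_u \VaR^u(s_0) \le \VaR^{u^*}(s_0)$ forces equality; this holds for every $s_0$, so $u^* \in \mathcal U^{\rm DH}$ is optimal for $\mathcal M_4$ (and it is a genuine history-dependent policy because $s_0$, hence $\VaR^*(s_0)$, is part of the observed history).

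The step I expect to be the main obstacle is pinning down the correct target level in the reconstruction, which is precisely where the minimization case departs from the maximization case of Theorem~\ref{thm:equiva}. For VaR maximization one must solve the probabilistic problem at the \emph{left predecessor} of the optimal value (compare Theorem~\ref{thm:equiva_s}(b) and Theorem~\ref{thm:equiva}(b)), whereas here the optimal policy is recovered by solving $\hat{\mathcal M}_4$ — equivalently $\tilde{\mathcal M}_4$ — at $\VaR^*(s_0)$ itself. This reflects the asymmetry in Lemma~\ref{lem_VaRmin}: because $F^u(s_0,\cdot)$ already attains confidence $\alpha$ at $\VaR^u(s_0)$, the smallest $\lambda_0$ at which some policy reaches confidence $\alpha$ coincides with $\VaR^*(s_0)$, so no predecessor shift is needed; getting this bookkeeping right, together with carefully stating and invoking the maximization analogues of Lemmas~\ref{lem:dp} and \ref{lem:optpolicy} (in particular the identity $V_0^*(s_0,\lambda_0) = F^*(s_0,\lambda_0)$), is the crux. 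The remaining details follow Section~\ref{sec:tvarmdp} essentially line by line.
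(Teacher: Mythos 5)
Your proof is correct and follows exactly the route the paper intends: the paper omits an explicit proof of Theorem~\ref{thm:equiva_min}, deferring to the arguments for Theorem~\ref{thm:equiva_min_s} and the augmented-MDP machinery of Lemmas~\ref{lem:dp}--\ref{lem:optpolicy}, which is precisely what you carry out. You also correctly identify the one substantive point of departure from the maximization case — that the optimal policy is reconstructed at target level $\VaR^*(s_0)$ itself rather than at its left predecessor — matching the paper's own remark following Theorem~\ref{thm:equiva_min_s}.
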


     Similarly, to efficiently solve the bilevel optimization problem
     (\ref{equ:opt_value2}), we derive the following theorem to improve
     policy and find optima.

     \begin{thm} \label{thm:imp_min}
     {\rm (\textbf{Policy Improvement and Optimality Rules for $\mathcal
            M_4$})}

     Given a policy $u \in \mathcal U^{\rm DH}$,  and let $\VaR^u_{-}(s_0,\Lambda_0)$ be the left predecessor of $\VaR^u(s_0)$ in $\Lambda_0$,
     \begin{itemize}
        \item[(a)] if there exists another policy $u' \in \mathcal U^{\rm DH}$ such that $F^{u'}(s_0,\VaR^u_{-}(s_0,\Lambda_0)) \ge \alpha$, then $\VaR^{u'}(s_0)<\VaR^u(s_0)$, $s_0 \in \mathcal S$;

        \item[(b)] if and only if $F^{*}(s_0,\VaR^u_{-}(s_0,\Lambda_0)) < \alpha$ for any $s_0 \in \mathcal
        S$, the policy $u$ is optimal for $\mathcal M_4$.
     \end{itemize}
     \end{thm}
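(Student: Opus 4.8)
The plan is to follow the template of Theorem~\ref{thm:imp_mins} and Corollaries~\ref{cor:imp}--\ref{cor:opt}, replacing the steady-state duality relation by Lemma~\ref{lem_VaRmin} applied with $X:=R_{0:T}$ generated by the current policy $u$ from $s_0$ and $Y:=R_{0:T}$ generated by a competing policy $u'\in\mathcal U^{\rm RH}$ from $s_0$, so that $\VaR_{\alpha}(X)=\VaR^u(s_0)$ and $\VaR_{\alpha}(Y)=\VaR^{u'}(s_0)$. Under every policy, $R_{0:T}$ has support contained in the finite set $\Lambda_0$, and $\VaR^u(s_0)\in\Lambda_0$ by (\ref{equ:new_tvar}); hence no point of $\Lambda_0$, and in particular no atom of $Y$, lies strictly between $\VaR^u_{-}(s_0,\Lambda_0)$ and $\VaR^u(s_0)$. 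Consequently $F^{u'}(s_0,\VaR^u_{-}(s_0,\Lambda_0))=\mathbbm P_{s_0}^{u'}(R_{0:T}<\VaR^u(s_0))$, which is exactly the quantity $\mathbbm P(Y\le\lambda_{-}(\mathcal Y))$ in Lemma~\ref{lem_VaRmin} for $\lambda=\VaR^u(s_0)$. This identification lets me invoke (\ref{eq_VaRmin-impr}) and (\ref{eq_VaRmin-optim}) verbatim, with the left predecessor taken in $\Lambda_0$ rather than in the support of $Y$.

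Part (a) is then immediate from the ``$\Rightarrow$'' implication of (\ref{eq_VaRmin-impr}): the hypothesis $F^{u'}(s_0,\VaR^u_{-}(s_0,\Lambda_0))\ge\alpha$ yields $\VaR^{u'}(s_0)<\VaR^u(s_0)$ at that $s_0$; if it holds for every $s_0\in\mathcal S$, then $u'$ strictly improves $u$ at all initial states.

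Part (b) I would prove in two directions. \emph{Necessity}: if $u$ is optimal for $\mathcal M_4$ in (\ref{equ:tvarmdp_min}), then $\VaR^{u'}(s_0)\ge\VaR^u(s_0)$ for all $u'\in\mathcal U^{\rm RH}$, so the ``$\Leftarrow$'' implication of (\ref{eq_VaRmin-optim}) gives $F^{u'}(s_0,\VaR^u_{-}(s_0,\Lambda_0))<\alpha$ for every such $u'$, whence $F^{*}(s_0,\VaR^u_{-}(s_0,\Lambda_0))\le\alpha$. To sharpen this to a strict inequality I would use that the supremum defining $F^{*}$ in (\ref{equ:promdp_max}) is attained: by the equivalence of $\hat{\mathcal M}_4(\lambda_0)$ with the finite-horizon expectation MDP $\tilde{\mathcal M}_4$ (the minimization counterpart of Lemma~\ref{lem:optpolicy}, already used in Theorem~\ref{thm:equiva_min}), there is a deterministic history-dependent policy $\hat u^{*}$ with $F^{\hat u^{*}}(s_0,\VaR^u_{-}(s_0,\Lambda_0))=F^{*}(s_0,\VaR^u_{-}(s_0,\Lambda_0))$, and the preceding estimate applied to $u'=\hat u^{*}$ forces $F^{*}(s_0,\VaR^u_{-}(s_0,\Lambda_0))<\alpha$ for each $s_0$. \emph{Sufficiency}: if $F^{*}(s_0,\VaR^u_{-}(s_0,\Lambda_0))<\alpha$ for all $s_0$, then for every $u'\in\mathcal U^{\rm RH}$ one has $F^{u'}(s_0,\VaR^u_{-}(s_0,\Lambda_0))\le F^{*}(s_0,\VaR^u_{-}(s_0,\Lambda_0))<\alpha$, and the ``$\Rightarrow$'' implication of (\ref{eq_VaRmin-optim}) gives $\VaR^{u'}(s_0)\ge\VaR^u(s_0)$ for all $u'$ and all $s_0$, i.e., $u$ is optimal.

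The cosmetic point to handle with care is the reconciliation of the two notions of left predecessor described in the first paragraph, including the degenerate case $\VaR^u(s_0)=\min\Lambda_0$, where $\VaR^u_{-}(s_0,\Lambda_0)=\VaR^u(s_0)-1$ and both sides of the identity equal $0$. The one substantive ingredient — and the step I expect to be the main obstacle — is the attainment of a maximizing policy for $F^{*}$ needed to upgrade ``$\le\alpha$'' to ``$<\alpha$'' in the necessity half of (b); this rests on $\tilde{\mathcal M}_4$ being a genuine finite-horizon expectation MDP admitting an optimal deterministic Markov policy, which in turn induces an element of $\mathcal U^{\rm DH}$ attaining $F^{*}(s_0,\cdot)$.
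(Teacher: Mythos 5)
Your proof is correct and follows essentially the same route the paper intends: the paper omits a proof of Theorem~\ref{thm:imp_min}, but your argument is exactly the finite-horizon transcription of its proof of Theorem~\ref{thm:imp_mins}, with Lemma~\ref{lem_VaRmin} applied to $X=R_{0:T}$ under $u$ and $Y=R_{0:T}$ under $u'$. Your two points of extra care --- reconciling the left predecessor in $\Lambda_0$ with the one in the support of $Y$, and invoking attainment of the supremum in (\ref{equ:promdp_max}) via the augmented MDP $\tilde{\mathcal M}_4$ to upgrade ``$\le\alpha$'' to ``$<\alpha$'' in the necessity half of (b) --- are both legitimate and are in fact slightly more careful than the paper's own treatment of the steady-state analogue, where finiteness of $\mathcal U^{\rm DS}$ makes attainment immediate.
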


     Similar to the approach presented in Section~\ref{sec:tvarmdp}, we
     can develop a policy iteration type algorithm to solve the finite-horizon VaR
     minimization MDP, leveraging the theoretical foundations established
     in Theorems~\ref{thm:equiva_min} and \ref{thm:imp_min}. The
     structure of the algorithm is similar to
     Algorithm~\ref{alg:PI_finte_1}, except the modifications of several
     lines as follows.
     \begin{itemize}
     \item \textbf{Line 6:} Replace with ``Update target
level: $\lambda_0^{(k)} = \VaR^{u^{(k)}}_{-}(s_0,\Lambda_0)$, where
$\VaR^{u^{(k)}}_{-}(s_0,\Lambda_0)$ is the left predecessor of
$\VaR^{u^{(k)}}(s_0)$ in $\Lambda_0$".
     \item \textbf{Line 7:} Replace with ``Solve the augmented MDP $\tilde{\mathcal
     M}_4$".
     \item \textbf{Line 8} and \textbf{9:} Substitute ``$\min\limits_{\tilde{u}\in \tilde{\mathcal{U}}^{\rm DM}}$" with ``$\max\limits_{\tilde{u}\in \tilde{\mathcal{U}}^{\rm DM}}$", and ``$\argmin\limits_{\tilde{u}\in \tilde{\mathcal{U}}^{\rm DM}}$" with ``$\argmax\limits_{\tilde{u}\in \tilde{\mathcal{U}}^{\rm
     DM}}$".
     \item \textbf{Line 12:} Replace with ``\textbf{While} $F^*(s_0, \lambda_0^{(k)}) \ge
     \alpha$".
     \end{itemize}

     With Sections~\ref{sec:svarmdp}-\ref{sec:exten}, we have explored
     the optimization of VaR MDPs under different objectives, including
     maximization and minimization, steady-state and finite-horizon. For
     any current policy of VaR MDPs, we can identify an improved policy
     by solving a probabilistic MDP, which is guaranteed by the duality
     between VaR and probabilities. Notably, the maximization and
     minimization of VaR MDPs rely on different theoretical foundations,
     i.e., Lemmas~\ref{lem_VaRmax} and \ref{lem_VaRmin} reflect their
     distinct nature. Despite this difference, all these VaR MDPs can be
     reformulated as a unified form of bilevel optimization and share a
     common framework of policy iteration type algorithms to efficiently
     find optimal policies. This unified perspective may provide a
     foundation for extending the optimization framework to other bilevel
     problems, highlighting the versatility of our proposed approach.

\section{Numerical Experiments}\label{sec:numer}
In this section, we present numerical experiments to validate the
theoretical results and algorithms developed in
Sections~\ref{sec:svarmdp}-\ref{sec:exten}.
Subsection~\ref{sec:numeri_svar} validates the optimality and
computational efficiency of our algorithm in steady-state VaR MDPs.
Subsection~\ref{sec:numeri_tvar} gives a performance comparison
between our policy iteration algorithm and the value iteration
algorithm by \cite{li2022quantile} in finite-horizon VaR MDPs.
Subsection~\ref{sec:microgrid} demonstrates the applicability of our
approach through an example of renewable energy management in
microgrid systems. All experiments are conducted on a computer with
Python v3.7, an AMD Ryzen 3995WX processor (64 cores), and 256 GB of
memory.

\subsection{Steady-State VaR MDPs}\label{sec:numeri_svar}
In this subsection, we conduct numerical examples to verify the
optimality and computational efficiency of our policy iteration
algorithms for both maximizing and minimizing the VaR of
steady-state rewards in infinite-horizon MDPs. The experiments are
conducted with different problem sizes that $(\lvert \mathcal S
\rvert,\lvert \mathcal A \rvert) \in
\left\{(100,100),(100,1000),(1000,100),(1000,1000)\right\}$, where
the transition probabilities are randomly generated and the
instantaneous rewards are uniformly sampled from range $(0,100)$.
The probability level is set as $\alpha = 0.1$.

To verify the optimality and efficiency of
Algorithm~\ref{alg:PI_steady}, we compare its performance against a
baseline method that solves $\mathcal M_1$ by naively enumerating
$|\Lambda|$ individual probabilistic minimization MDPs, as
established in Theorem~\ref{thm:equiva_s}. The comparison results
are summarized in Table~\ref{tab:infty}, where $T_{\mathrm{iter}}$
and $\mathrm{VaR}^*_{\mathrm{iter}}$ represent the computation time
(in seconds) and the VaR value achieved by
Algorithm~\ref{alg:PI_steady}, respectively. The counterpart values
of the baseline method are denoted with subscript ``base" as
$T_{\mathrm{base}}$ and $\mathrm{VaR}^*_{\mathrm{base}}$,
respectively.

    \begin{table}[ht]
        \centering
            \begin{tabular}{|c|c|c|c|c|c|c|}
                \hline
                Scenario & $|\mathcal S|$ &  $|\mathcal A|$  & $T_{\mathrm{iter}}$ & $T_{\mathrm{base}}$ & $\VaR^*_{\mathrm{iter}}$ & $\VaR^*_{\mathrm{base}}$ \\ \hline
                1  & $100$    & $100$     & $12$  & $1,\!573$ & $97.399,\!87$ & $97.399,\!87$ \\ \hline
                2  & $100$     & $1000$ &  $100$ & $83,\!083$ & $99.798,\!07$ & $99.798,\!07$ \\ \hline
                3  & $1000$    & $100$    & $133$ & $171,\!044$ & $97.547,\!76$ & $97.547,\!76$ \\ \hline
                4  & $1000$   & $1000$    & $1,\!349$ & $8,\!895,\!074^{\dagger}$ & $99.783,\!81$ & \textemdash \\ \hline
        \end{tabular}
        \caption{Comparison of Algorithm~\ref{alg:PI_steady} and the baseline method in solving $\mathcal M_1$.}
        \label{tab:infty}
        \raggedright{  \small $^{\dagger}$Estimated by solving $1000$ probabilistic minimization MDPs and extrapolating to $|\Lambda|=951,\!511$ probabilistic minimization MDPs. The value of $\VaR^*_{\mathrm{base}}$ is not yet available.}
    \end{table}

     As shown in Table~\ref{tab:infty}, Algorithm~\ref{alg:PI_steady}
        achieves the same maximum VaR values as the baseline method, thereby
        confirming the convergence of global optimality. Moreover,
        Algorithm~\ref{alg:PI_steady} demonstrates a significant advantage
        of computational efficiency, outperforming the baseline by orders of magnitude in all cases.
        While the baseline method suffers from severe scalability issues, our algorithm maintains much more tractable computation time across scenarios.
        In Scenario~1, Algorithm~\ref{alg:PI_steady} finishes in just 12 seconds, over 100 times faster than the baseline. As the problem size grows, this efficiency advantage becomes even more significant.
        Notably, in the large-scale case $(|\mathcal S|,|\mathcal A|)=(1000,1000)$,  Algorithm~\ref{alg:PI_steady} finishes in $1,349$ seconds (approximately $22$ minutes), whereas the baseline is expected to cost over $8.8$ million seconds (approximately $103$ days), achieving over 6,500-fold acceleration.
        These results highlight the superior efficiency and scalability of Algorithm~\ref{alg:PI_steady}, especially for large-scale MDPs where the baseline method becomes computationally infeasible.
        The observations in Table~\ref{tab:infty} also empirically
        indicate that the computation time of
        Algorithm~\ref{alg:PI_steady} may grow at a polynomial rate, despite the exponential growth of the policy space
        size $|\mathcal A|^{|\mathcal S|}$. This feature resembles the computational efficiency of the policy iteration in the classical MDP theory.
    \begin{figure}[ht]
        \centering
        \subfigure[$(|\mathcal S|, |\mathcal A|) = (100,100)$]{
            \includegraphics[width=0.45\textwidth]{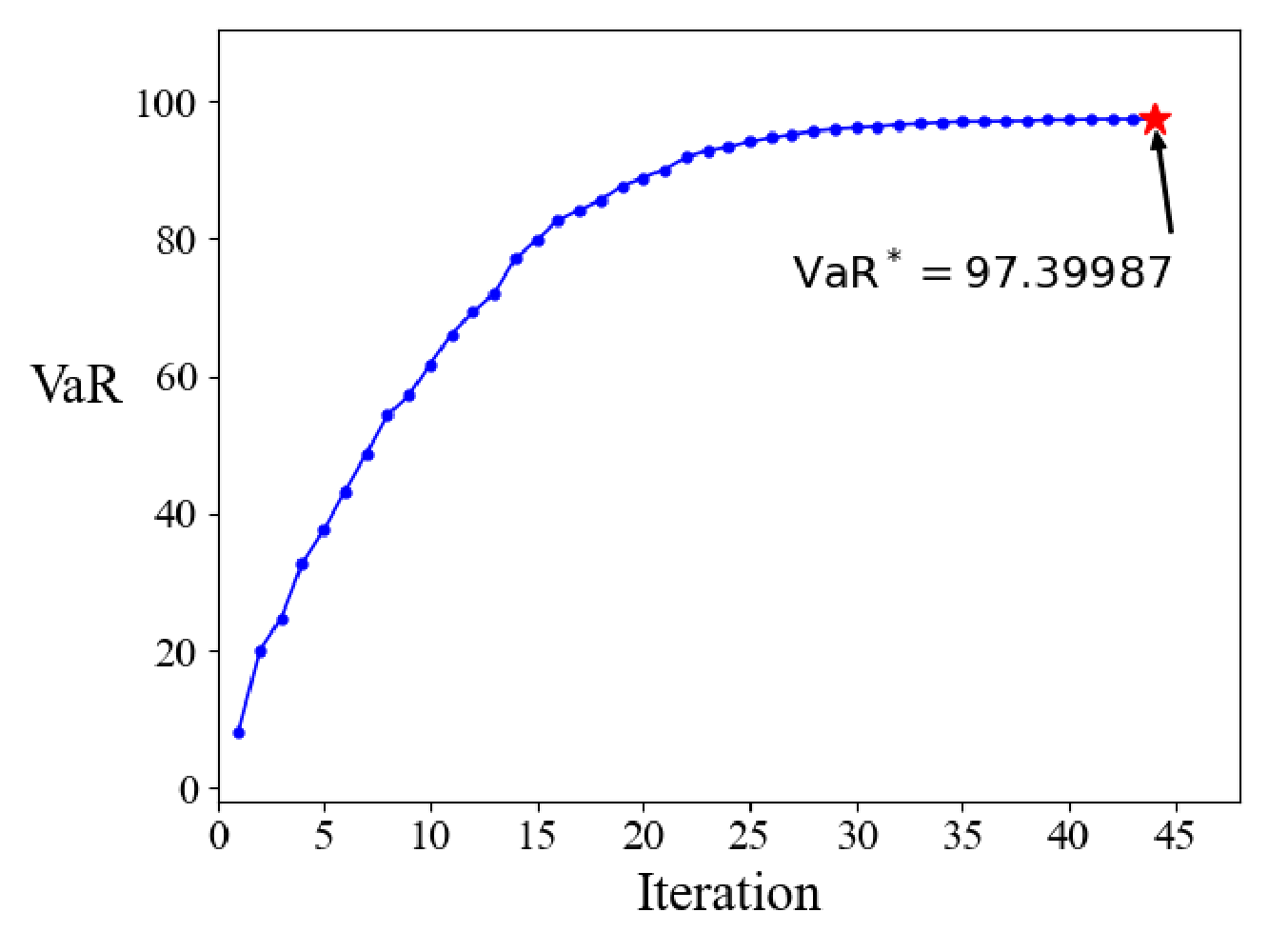}
            \label{fig:infty1}
        }\hfill
        \subfigure[$(|\mathcal S|, |\mathcal A|) = (100,1000)$]{
            \includegraphics[width=0.45\textwidth]{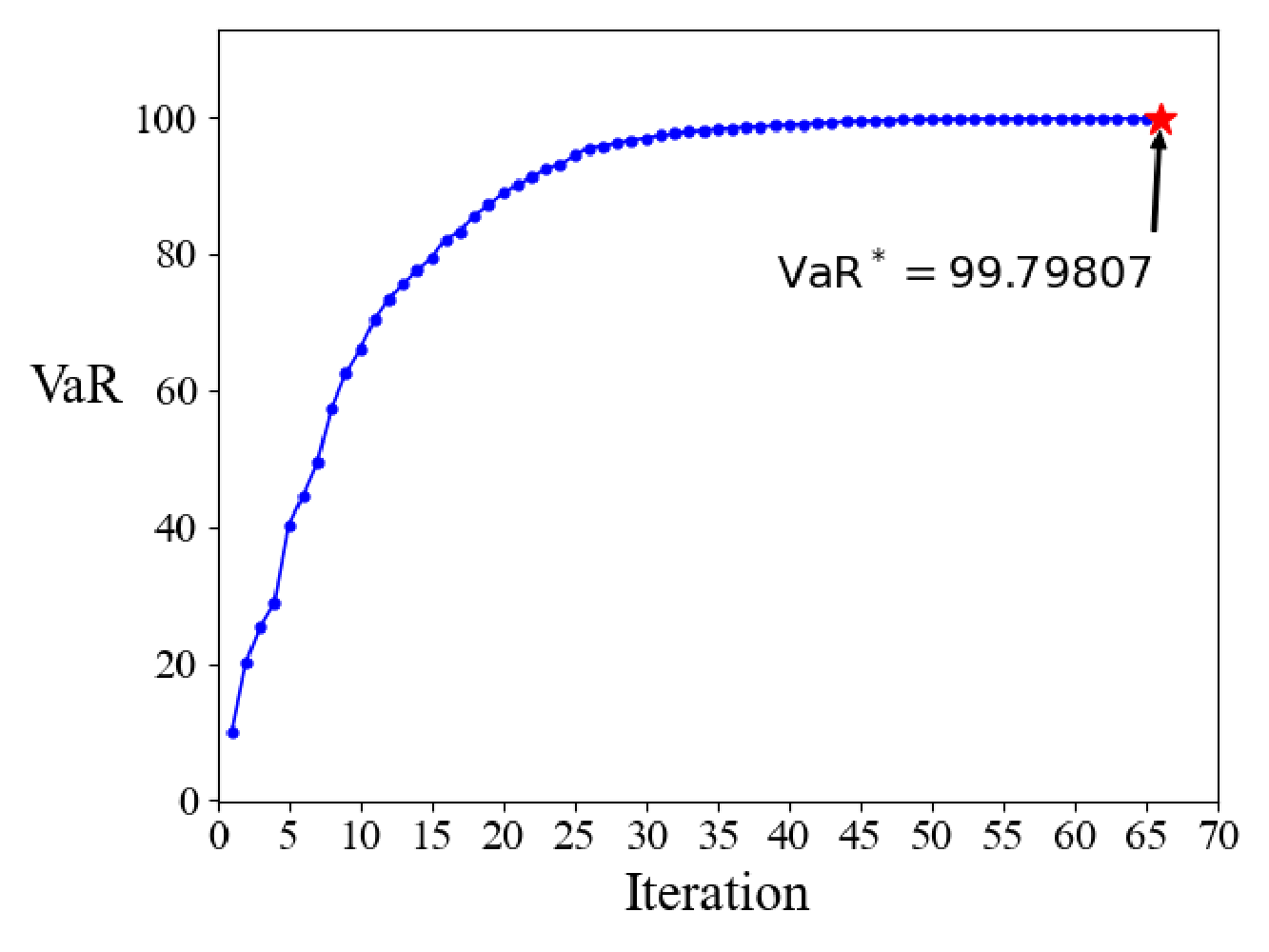}
            \label{fig:infty2}
        }\hfill
        \subfigure[$(|\mathcal S|, |\mathcal A|) = (1000,100)$]{
            \includegraphics[width=0.45\textwidth]{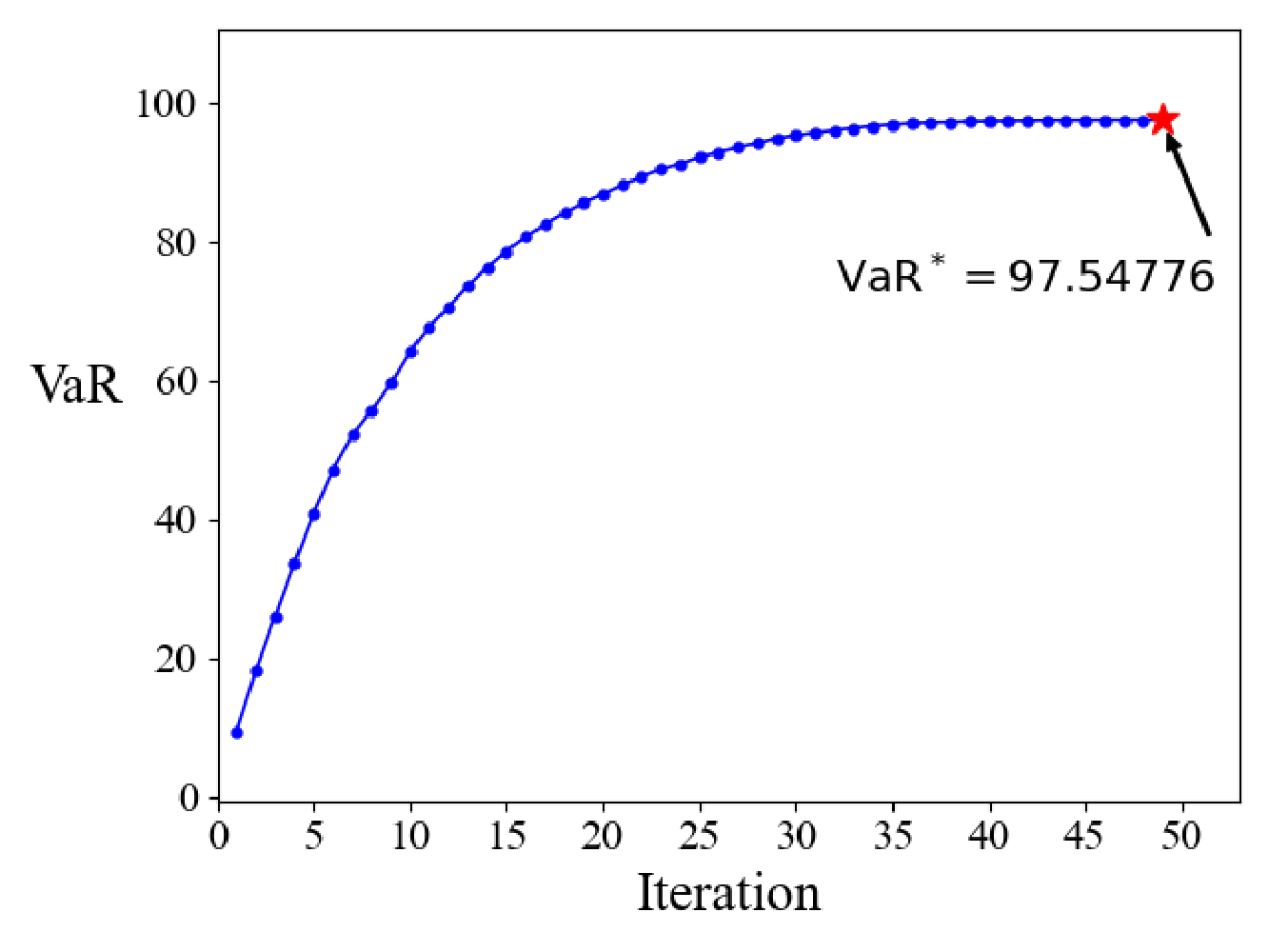}
            \label{fig:infty3}
        }\hfill
        \subfigure[$(|\mathcal S|, |\mathcal A|) = (1000,1000)$]{
            \includegraphics[width=0.45\textwidth]{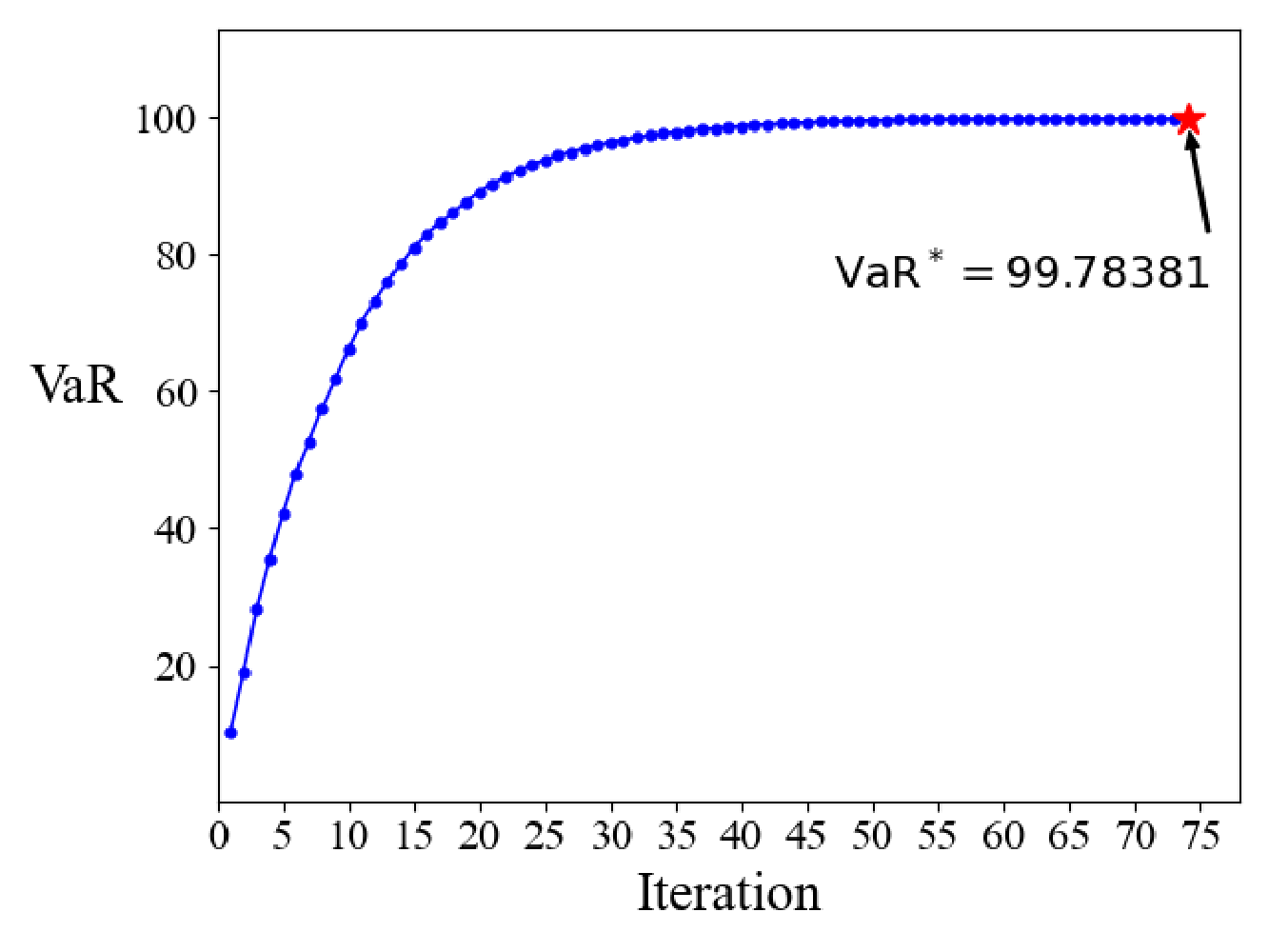}
            \label{fig:infty4}
        }
        \caption{The VaR improvement procedure of Algorithm~\ref{alg:PI_steady} in different scenarios.}
        \label{fig:infty}
    \end{figure}

    Figure~\ref{fig:infty} illustrates the policy iteration and
    improvement procedures of Algorithm~\ref{alg:PI_steady} in different
    problem sizes. We can observe that the algorithm strictly improves
    the VaR value at each iteration and ultimately converges to the
    global optimum. Interestingly, the number of iterations required
    remains relatively stable under different problem sizes.  However, as
        presented in Table~\ref{tab:infty}, the total computation time
        $T_{\mathrm{iter}}$ grows polynomially with problem size. This is because the computation time of each iteration, i.e., solving a probabilistic minimization MDP, increases with problem size.

    \begin{figure}[ht]
        \centering
        \subfigure[$(|\mathcal S|, |\mathcal A|) = (100,100)$]{
            \includegraphics[width=0.48\textwidth]{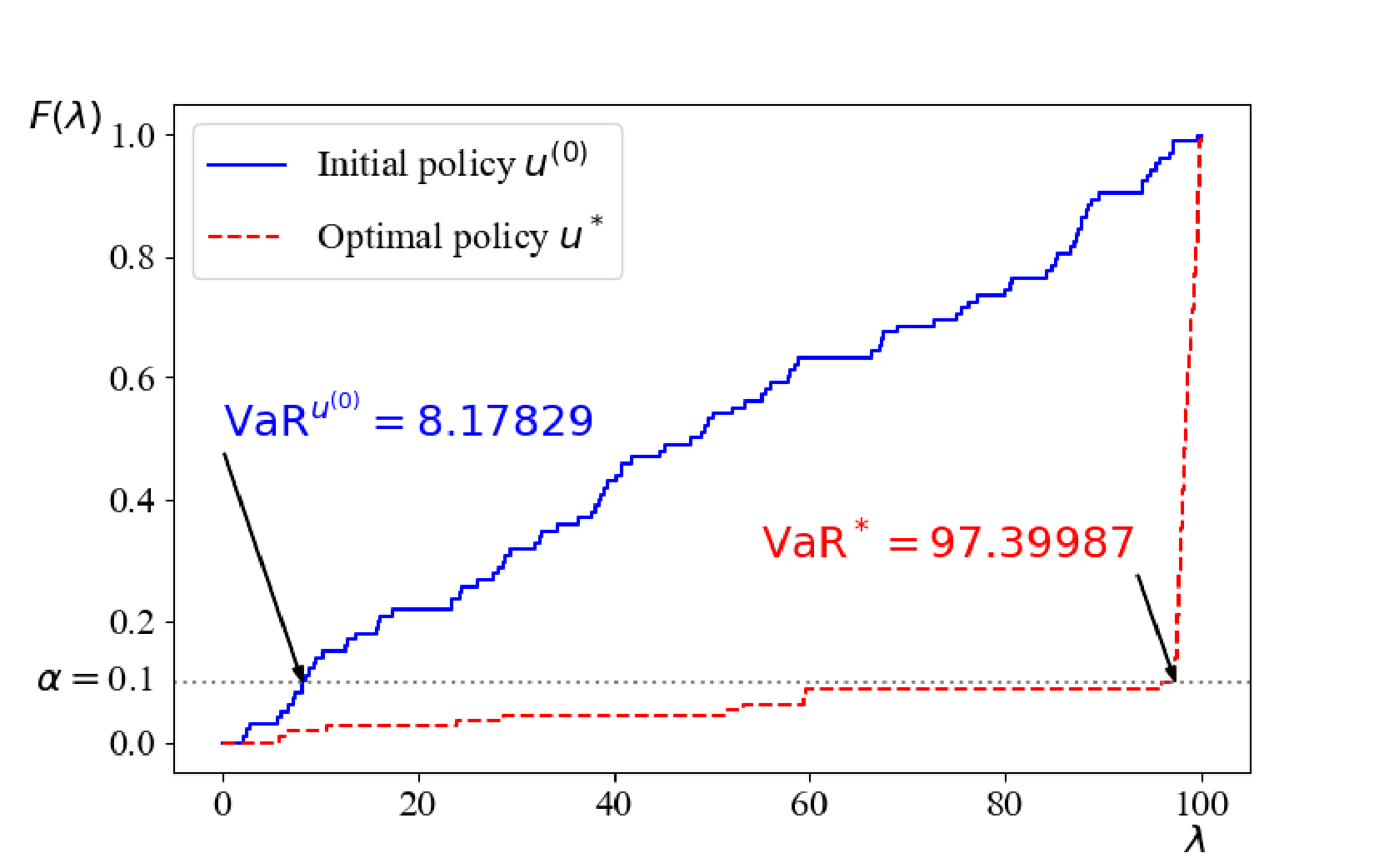}
            \label{fig:u1}
        }\hfill
        \subfigure[$(|\mathcal S|, |\mathcal A|) = (100,1000)$]{
            \includegraphics[width=0.48\textwidth]{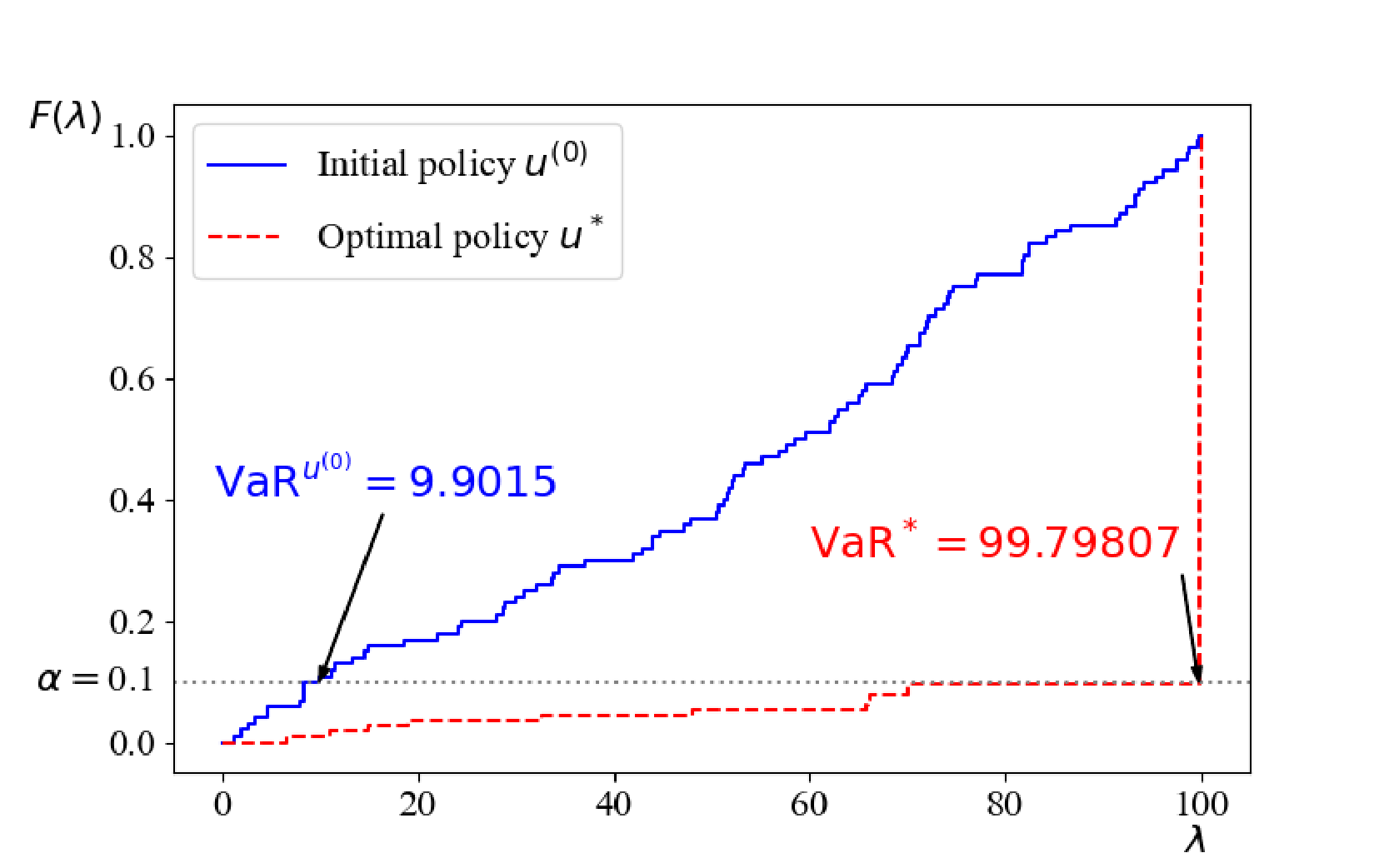}
            \label{fig:u2}
        }\hfill
        \subfigure[$(|\mathcal S|, |\mathcal A|) = (1000,100)$]{
            \includegraphics[width=0.48\textwidth]{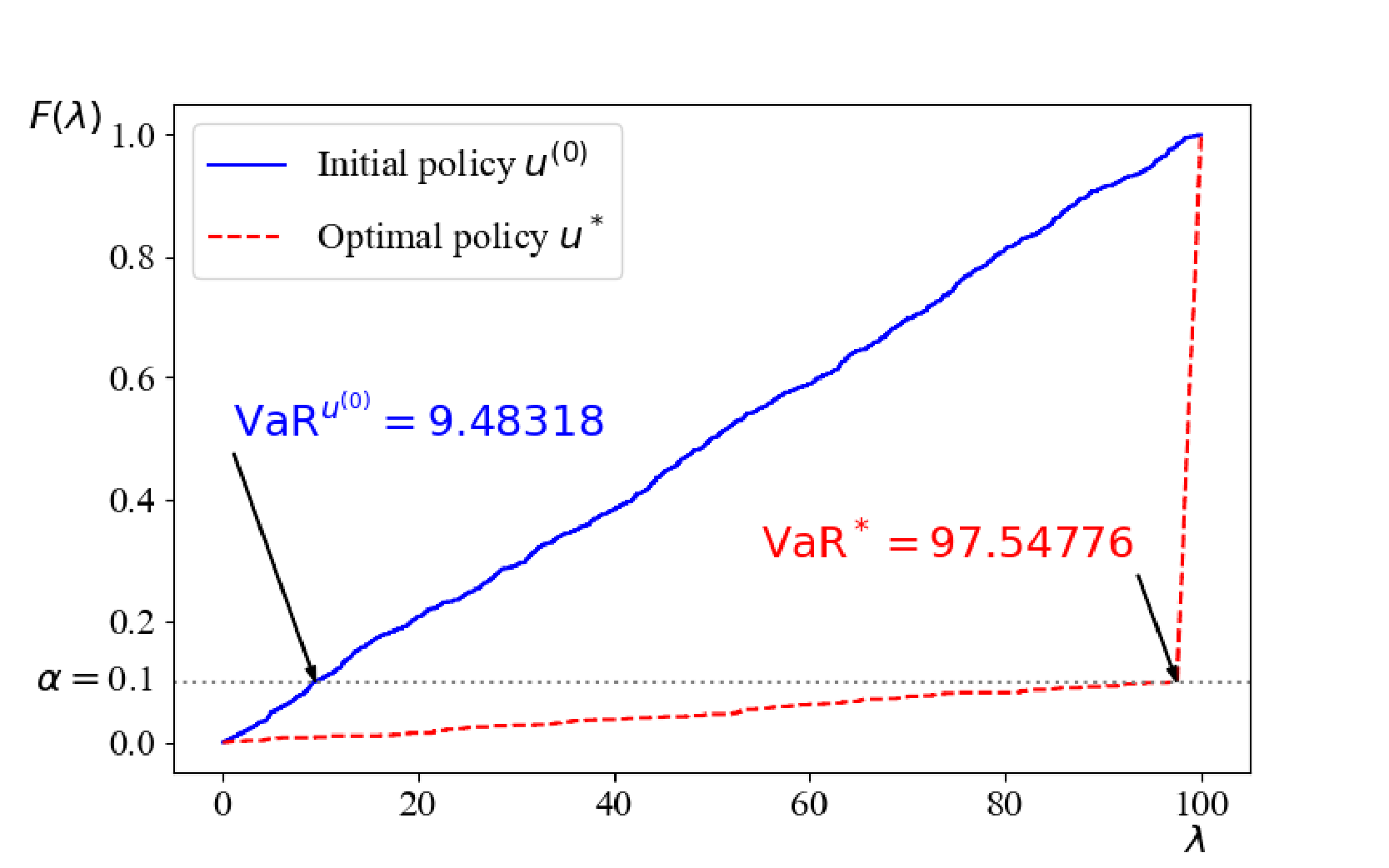}
            \label{fig:u3}
        }\hfill
        \subfigure[$(|\mathcal S|, |\mathcal A|) = (1000,1000)$]{
            \includegraphics[width=0.48\textwidth]{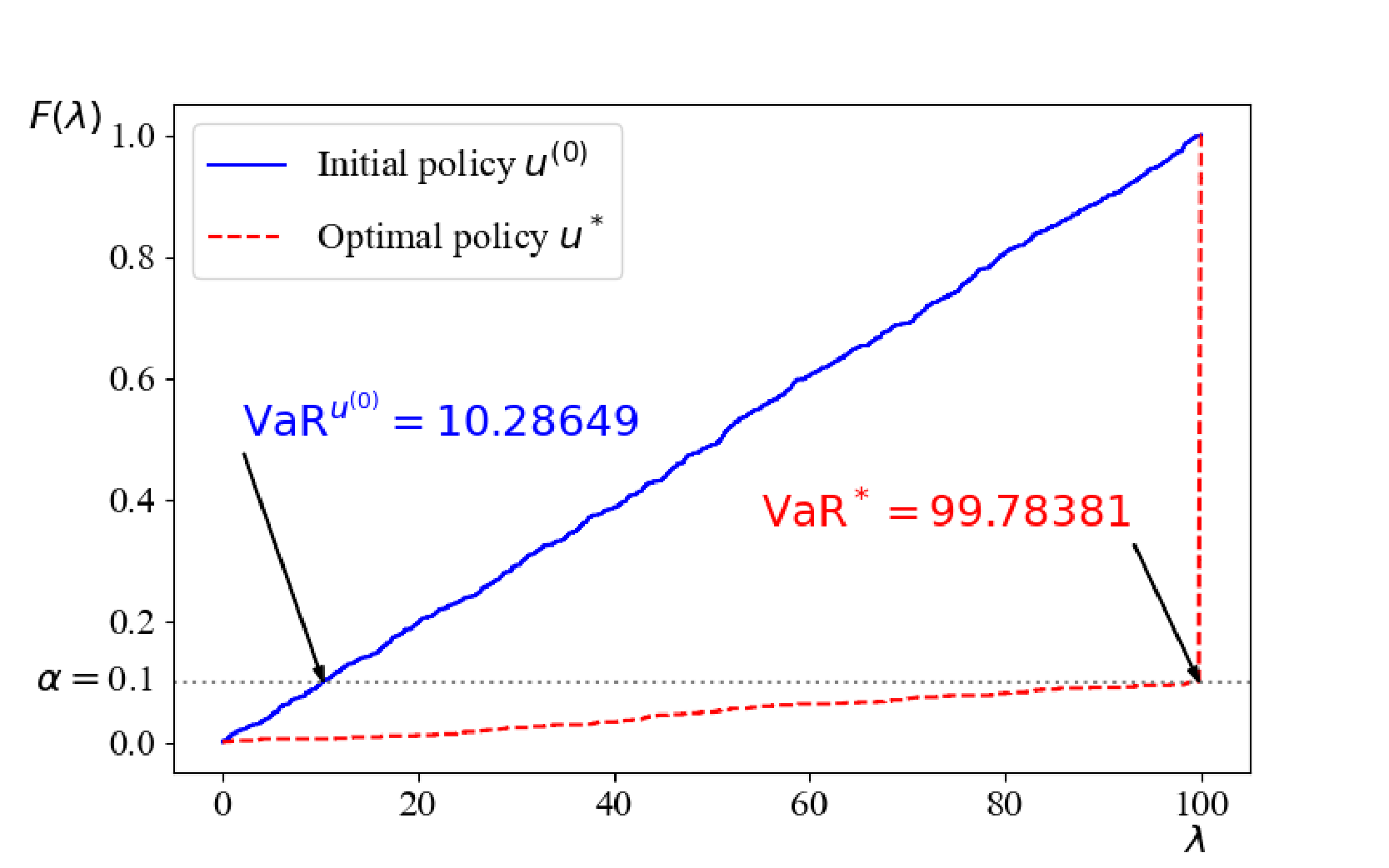}
            \label{fig:u4}
        }
        \caption{CDF comparison of the initial and optimal policies for $\mathcal M_1$ in different scenarios.}
        \label{fig:u_compare}
    \end{figure}
    Figure~\ref{fig:u_compare} further illustrates the cumulative
    distribution functions (CDFs) of the steady-state rewards under the
    initial policy and the optimal policy obtained by
    Algorithm~\ref{alg:PI_steady}, respectively. The initial policy
    $u^{(0)}$ is randomly selected at the beginning of
    Algorithm~\ref{alg:PI_steady}. As illustrated in
    Figure~\ref{fig:u_compare}, the CDF of $u^{(0)}$ exhibits a nearly
    uniform staircase pattern, indicating that the corresponding
    steady-state reward $R_{\infty}^{u^{(0)}}$ is approximately
    uniformly distributed. In contrast, the optimal policy $u^*$ yields
    a substantially improved distribution: It maximizes the VaR of
    $R_{\infty}^{u^*}$ by assigning negligible probability on lower
    reward values and concentrating over $90\%$ of the probability mass
    on the higher end of the support. This clearly demonstrates the
    algorithm's effectiveness in steering the system toward more
    favorable long-term outcomes.

    As a comparison, we also consider the problem of minimizing the VaR
    of steady-state costs in infinite-horizon MDPs, as studied in
    Subsection~\ref{sec:smdp_min}. We verify the optimality and
    efficiency of our proposed policy iteration algorithm for solving
    $\mathcal M_3$, by comparing its performance against the baseline
    method that solves $|\Lambda|$ individual probabilistic maximization
    MDPs based on Theorem~\ref{thm:equiva_min_s}. The experimental
    parameter settings are identical to those in the steady-state VaR
    maximization MDPs, except that the probability level is set as
    $\alpha = 0.9$. The numerical results are summarized in Table~\ref{tab:infty_min},
    Figures~\ref{fig:infty_min} and \ref{fig:u_m_compare}.

\begin{figure}[ht]
    \centering
    \subfigure[$(|\mathcal S|, |\mathcal A|) = (100,100)$]{
        \includegraphics[width=0.45\textwidth]{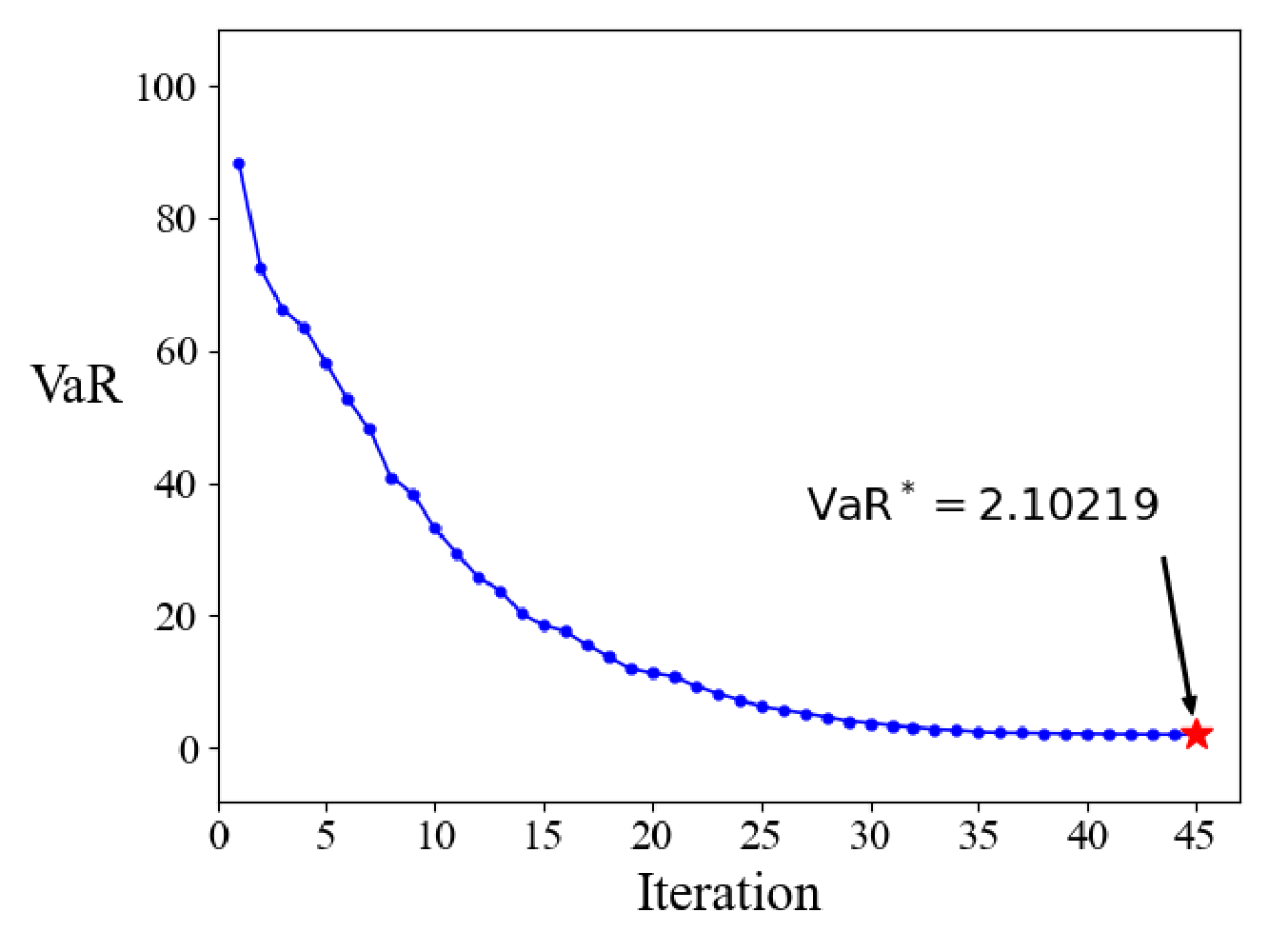}
        \label{fig:infty_m1}
    }\hfill
     \subfigure[$(|\mathcal S|, |\mathcal A|) = (100,1000)$]{
        \includegraphics[width=0.45\textwidth]{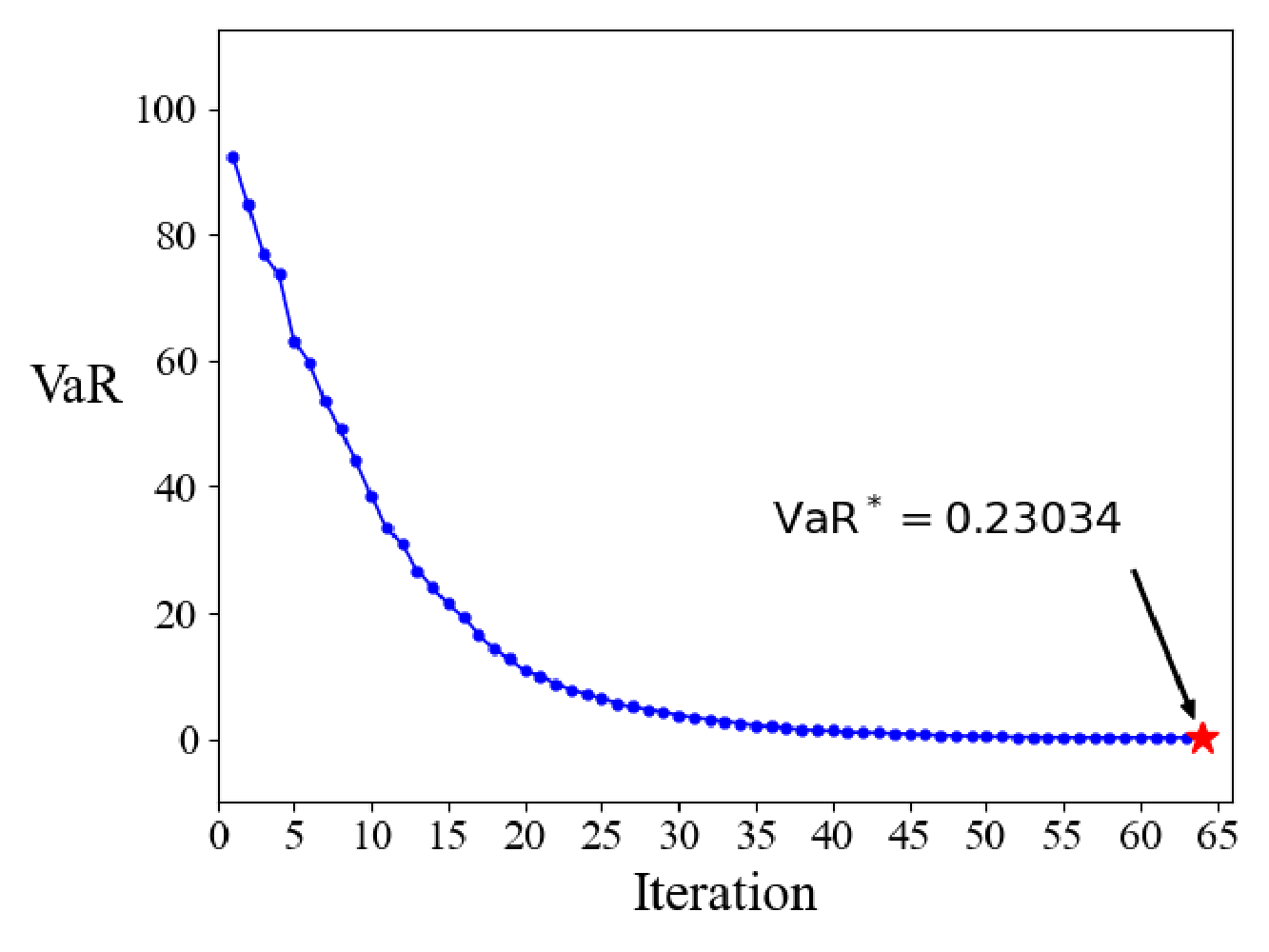}
        \label{fig:infty_m2}
    }\hfill
     \subfigure[$(|\mathcal S|, |\mathcal A|) = (1000,100)$]{
        \includegraphics[width=0.45\textwidth]{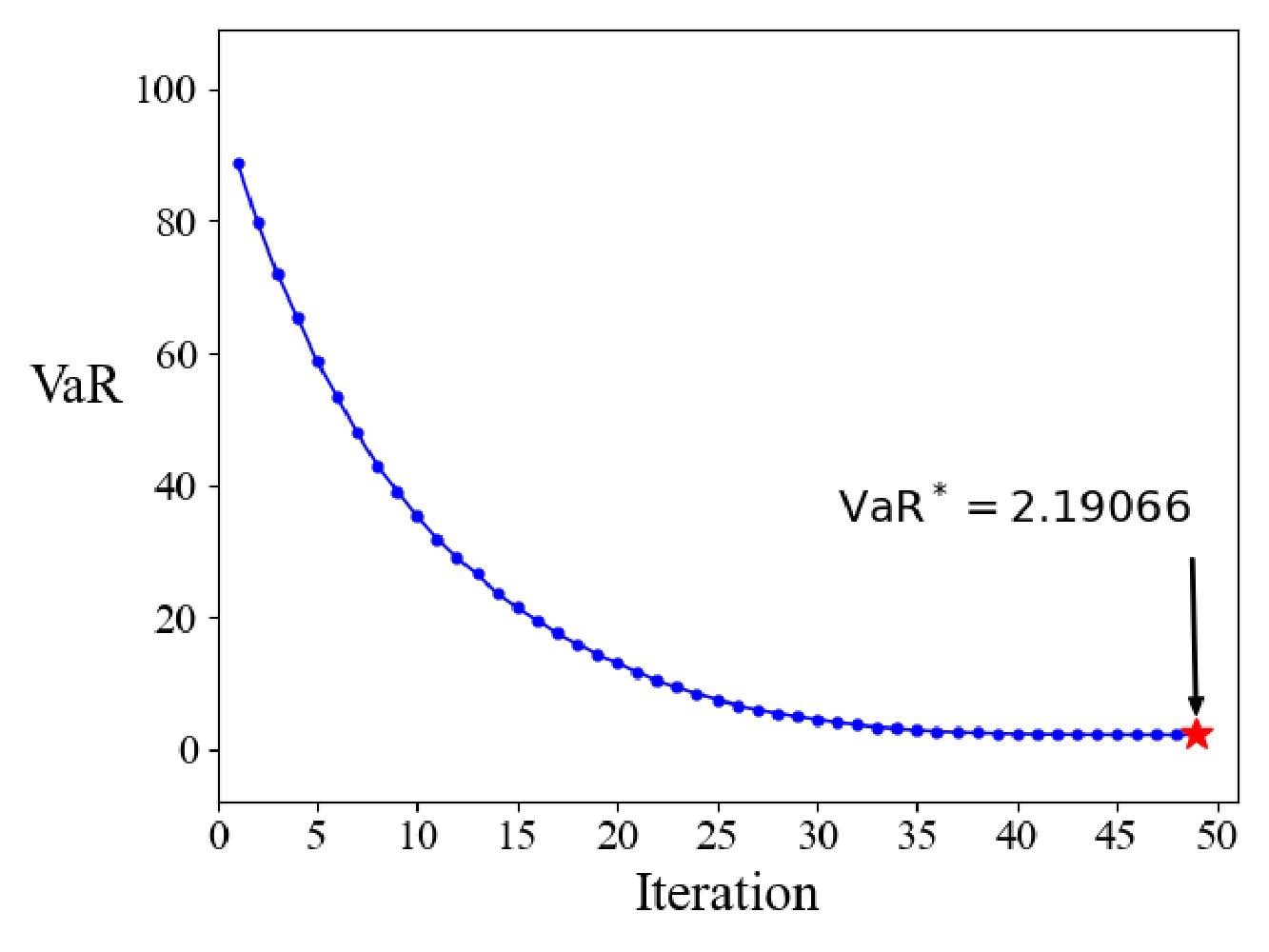}
        \label{fig:infty_m3}
    }\hfill
     \subfigure[$(|\mathcal S|, |\mathcal A|) = (1000,1000)$]{
        \includegraphics[width=0.45\textwidth]{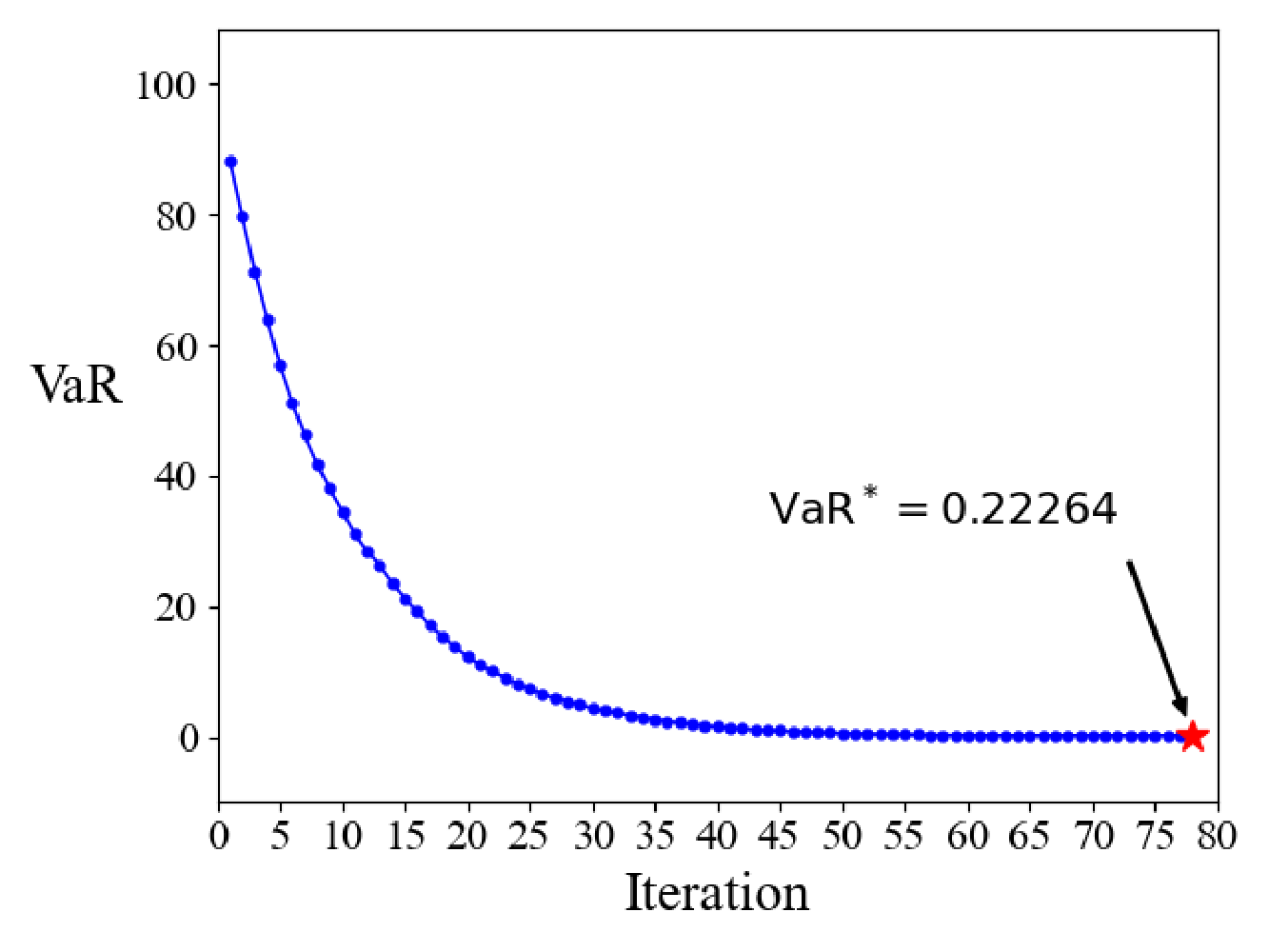}
        \label{fig:infty_m4}
    }
    \caption{The VaR improvement procedure of our algorithm solving $\mathcal M_3$ in different scenarios.}
    \label{fig:infty_min}
\end{figure}

   \begin{table}[htbp]
    \centering
    \begin{tabular}{|c|c|c|c|c|c|c|}
        \hline
        Scenario & $|\mathcal S|$ &  $|\mathcal A|$ & $T_{\mathrm{iter}}$ & $T_{\mathrm{base}}$ & $\VaR^*_{\mathrm{iter}}$ & $\VaR^*_{\mathrm{base}}$ \\ \hline
        1  & $100$ &$100$     & $11$ & $2,\!280$ & $2.102,\!19$ & $2.102,\!19$ \\ \hline
        2  & $100$ & $1000$    &   $90$ & $163,\!260$ & $0.230,\!34$ & $0.230,\!34$ \\ \hline
        3  & $1000$ & $100$     & $132$ & $349,\!631$& $2.190,\!66$ & $2.190,\!66$ \\ \hline
        4  & $1000$   & $1000$     & $1,\!498$ & $25,\!274,\!843^{\dagger}$& $0.222,\!64$ & \textemdash \\ \hline
    \end{tabular}
    \caption{Comparison of our algorithm and the baseline method in solving $\mathcal M_3$.}
    \label{tab:infty_min}
    \raggedright{ \small $^{\dagger}$ Estimated by solving $1000$ probabilistic maximization MDPs and extrapolating to $|\Lambda|=951,\!868$ probabilistic maximization MDPs. The value of $\VaR^*_{\mathrm{base}}$ is not yet available.}
\end{table}
      \begin{figure}[htbp]
    \centering
    \subfigure[$(|\mathcal S|, |\mathcal A|) = (100,100)$]{
        \includegraphics[width=0.48\textwidth]{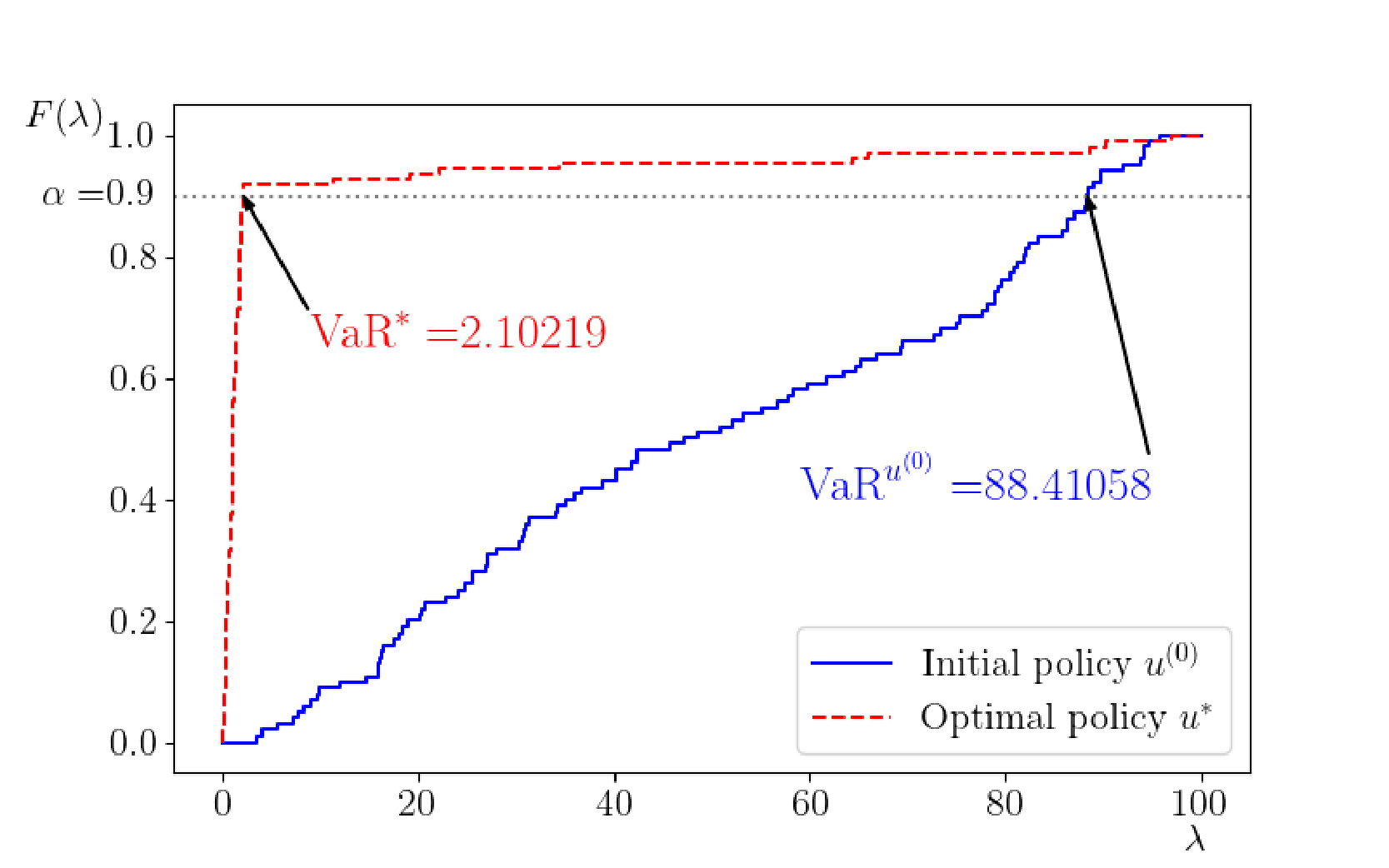}
        \label{fig:u_m1}
    }\hfill
    \subfigure[$(|\mathcal S|, |\mathcal A|) = (100,1000)$]{
        \includegraphics[width=0.48\textwidth]{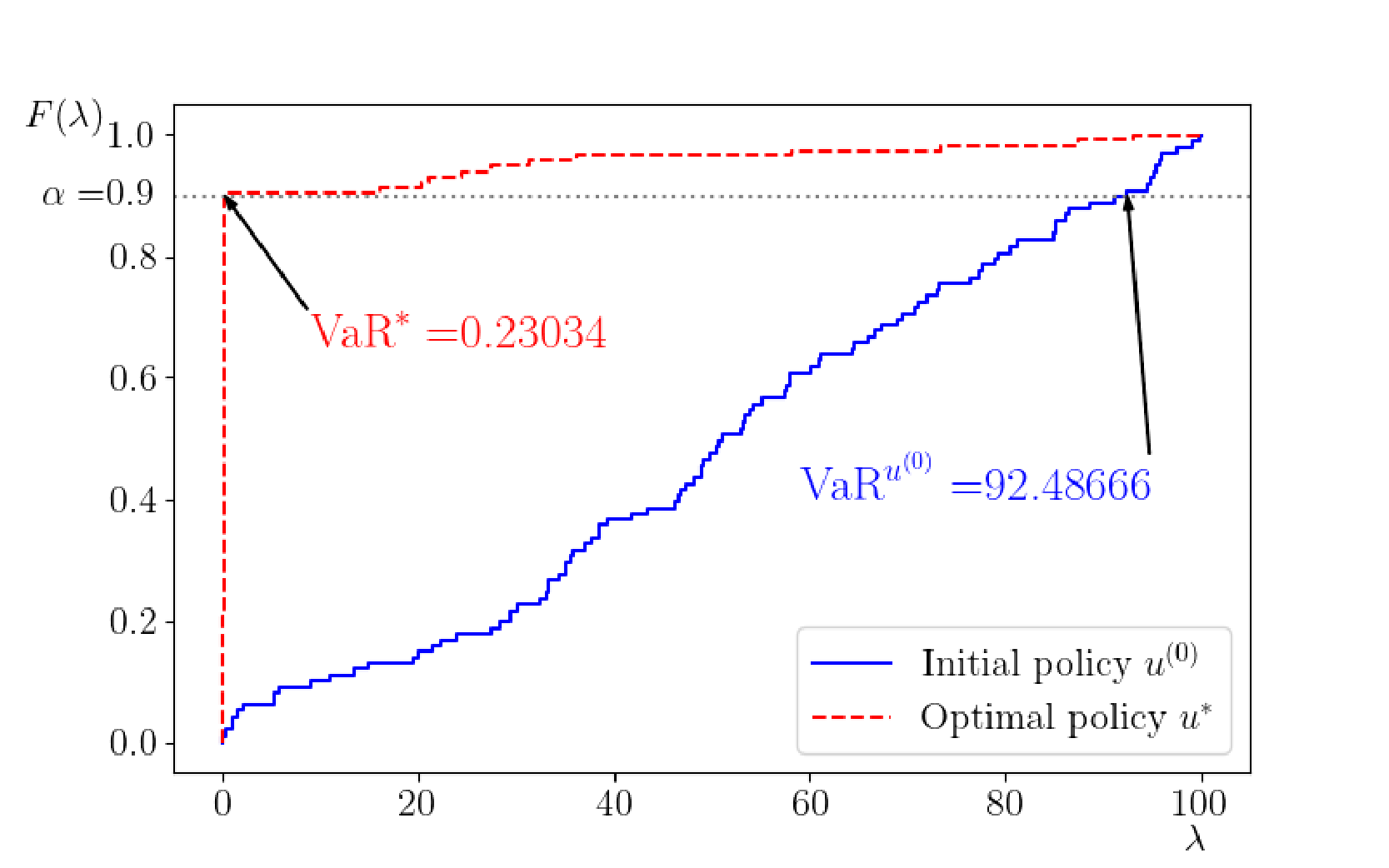}
        \label{fig:u_m2}
    }\hfill
    \subfigure[$(|\mathcal S|, |\mathcal A|) = (1000,100)$]{
        \includegraphics[width=0.48\textwidth]{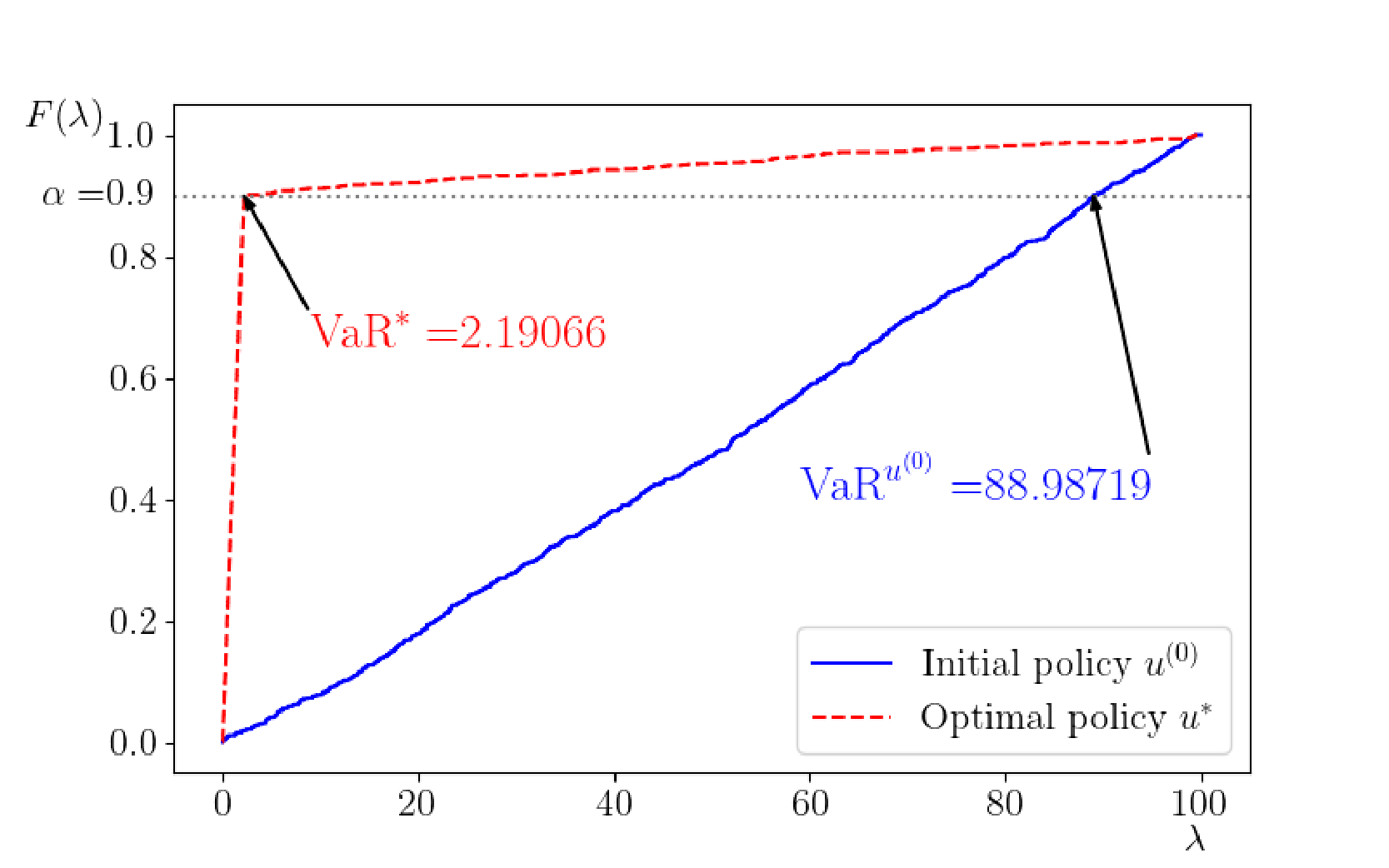}
        \label{fig:u_m3}
    }\hfill
    \subfigure[$(|\mathcal S|, |\mathcal A|) = (1000,1000)$]{
        \includegraphics[width=0.48\textwidth]{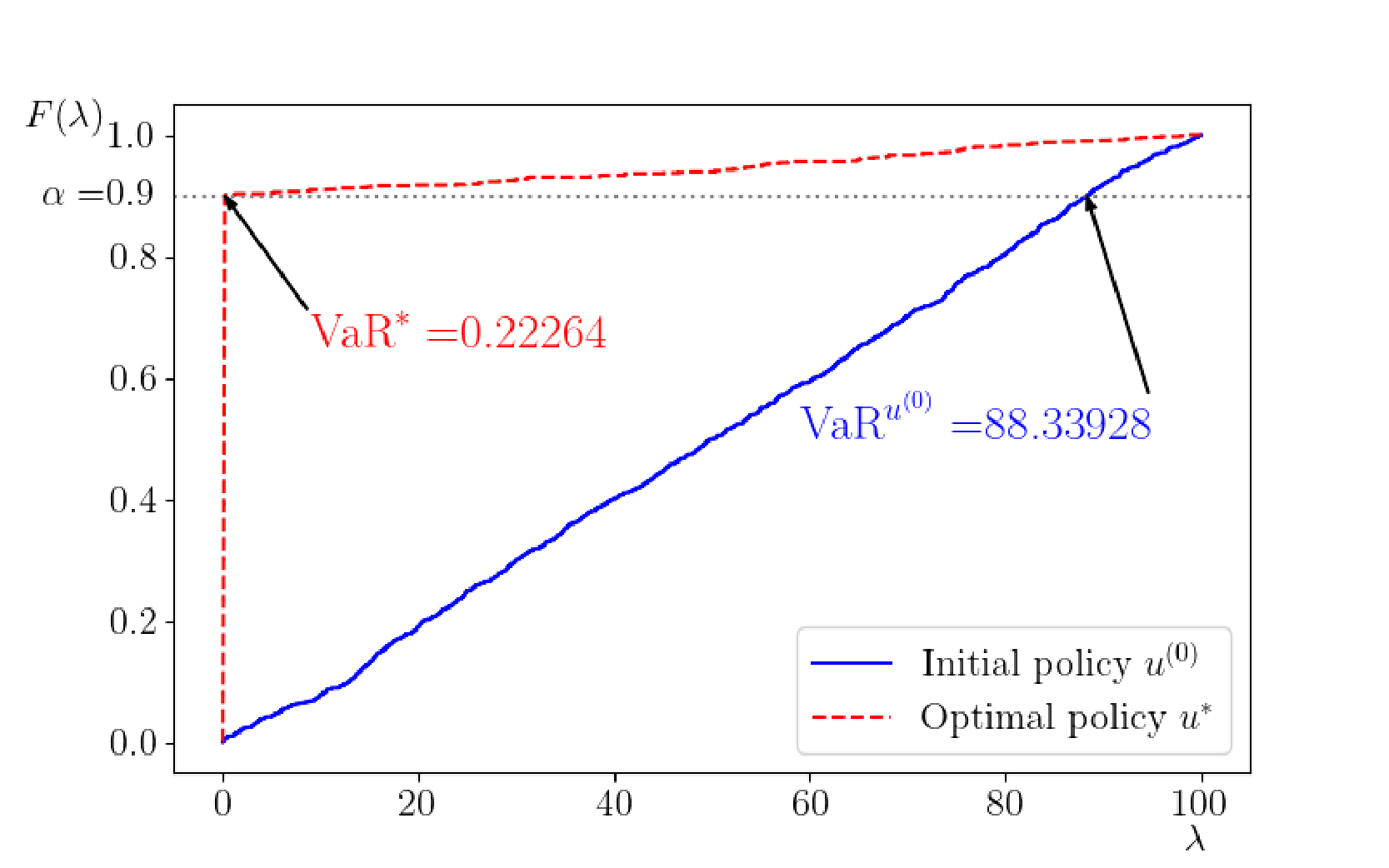}
        \label{fig:u_m4}
    }
    \caption{CDF comparison of the initial and optimal policies for ${\mathcal M}_3$ in different scenarios.}
    \label{fig:u_m_compare}
\end{figure}

Specifically, Table~\ref{tab:infty_min} presents the performance
comparison between our policy iteration algorithm and the baseline
method, Figure~\ref{fig:infty_min} illustrates the iteration
procedure of our policy iteration algorithm, and
Figure~\ref{fig:u_m_compare} depicts the CDFs of the steady-state
costs under an initial policy and the obtained optimal policy. These
results exhibit the similar patterns to those shown in
Table~\ref{tab:infty_min}, Figures~\ref{fig:infty} and
\ref{fig:u_compare}. Consistent with the observations for solving
$\mathcal{M}_1$, our policy iteration algorithm maintains both
optimality and computational efficiency when applied to solving
$\mathcal{M}_3$, aligning well with its theoretical results in
Subsection~\ref{sec:smdp_min}.

        \subsection{Finite-Horizon VaR MDPs}\label{sec:numeri_tvar}
        In this subsection, we consider the finite-horizon VaR maximization
        MDP. We conduct numerical experiments to compare the computational
        efficiency between our policy iteration type algorithm
        (Algorithm~\ref{alg:PI_finte_1}) and the value iteration type
        algorithm proposed by \cite{li2022quantile}, hereafter referred to
        as ``Ours" and ``Li's", respectively. To ensure a comprehensive
        evaluation, we consider MDP models with varying configurations.
        Specifically, the experiments consist of four subplots, each
        corresponding to the scaling of a different MDP parameter: time
        horizon $T$, state space size $|\mathcal{S}|$, action space size
        $|\mathcal{A}|$, and maximum reward $R_{\max}$. For each
        configuration, the transition probabilities are randomly generated,
        and the instantaneous rewards are uniformly sampled from the integer
        set $\{0,1,2,\dots, R_{\max}\}$. The baseline configuration is set
        as $T = 50$, $|\mathcal{S}| = 10$, $|\mathcal{A}| = 10$, and
        $R_{\max}=100$. In each case, a single parameter is varied while the
        others are held fixed at their baseline values. To ensure
        statistical robustness, we conduct $12$ independent replications for each
        configuration using different random seeds. The curves in
        Figure~\ref{fig:finite} report the average computation time across
        these trials, with shadow areas representing the standard
        deviations.

        \begin{figure}[ht]
            \centering
            \subfigure[The scaling of time horizon $T$]{
                \includegraphics[width=0.45\textwidth]{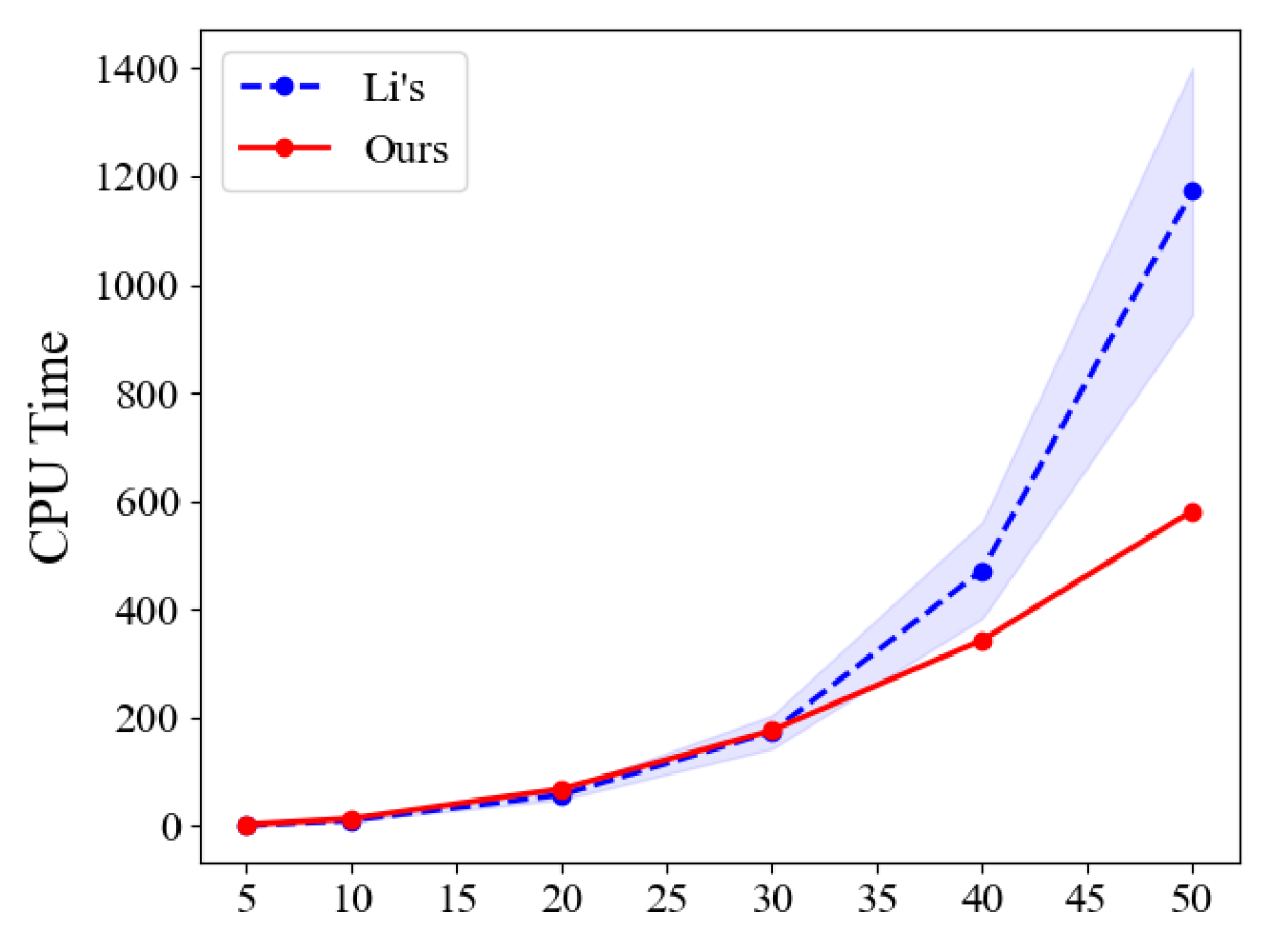}
                \label{fig:finite_T}
            }\hfill
            \subfigure[The scaling of state space size $|\mathcal S|$]{
                \includegraphics[width=0.45\textwidth]{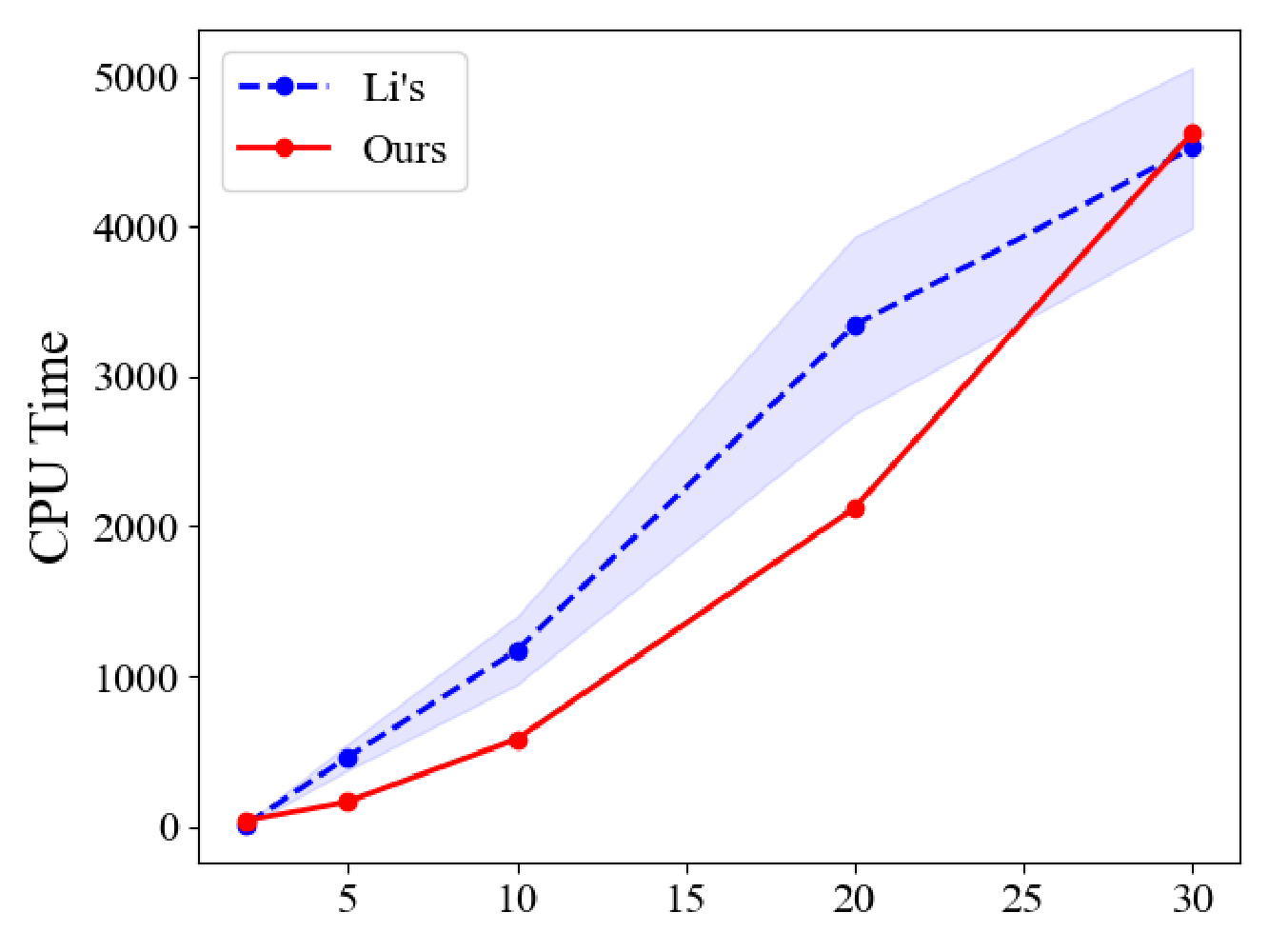}
                \label{fig:finite_S}
            }\hfill
            \subfigure[The scaling of action space size $|\mathcal A|$]{
                \includegraphics[width=0.45\textwidth]{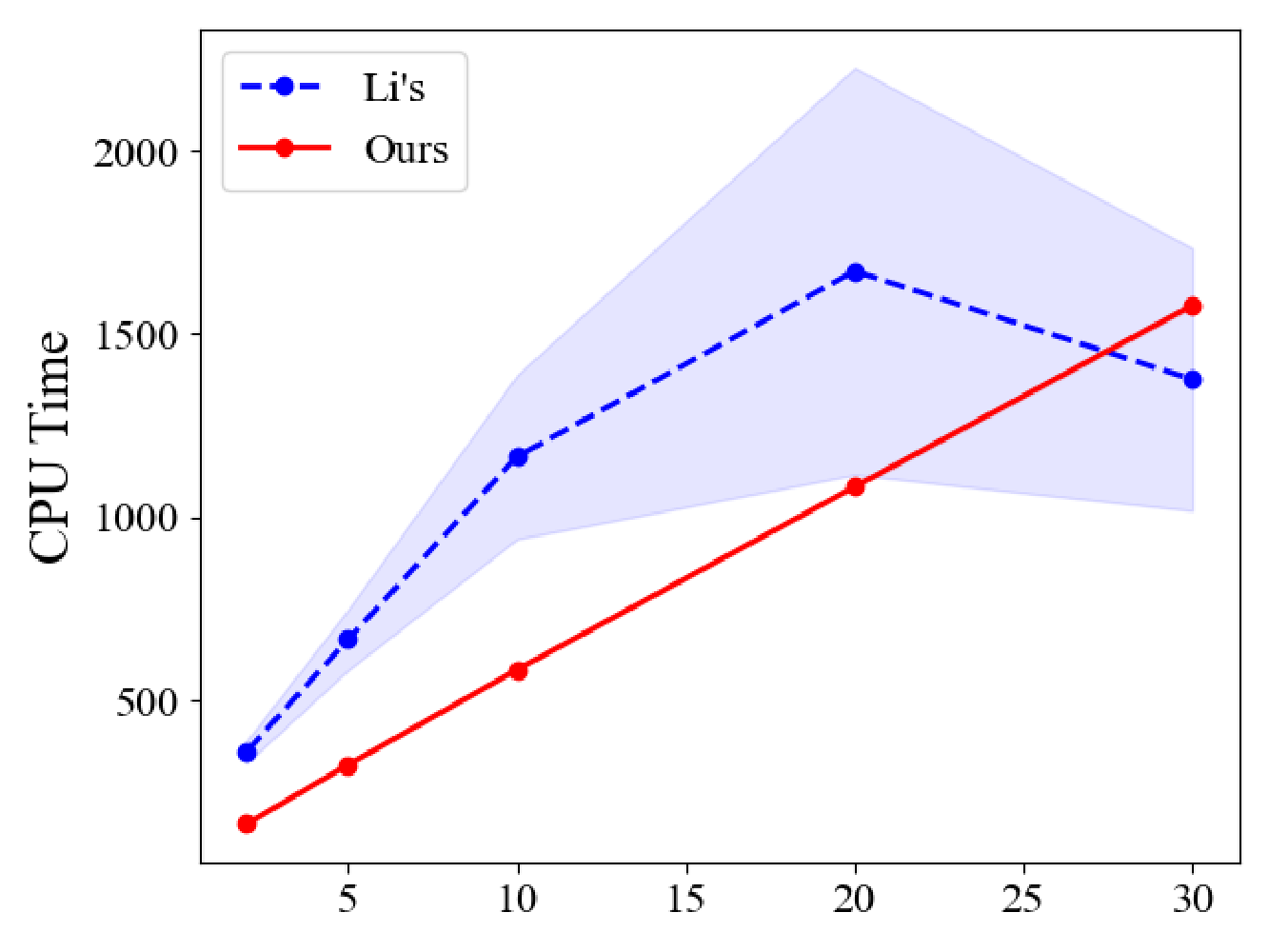}
                \label{fig:finite_A}
            }\hfill
            \subfigure[The maximum reward $R_{\max}$]{
                \includegraphics[width=0.45\textwidth]{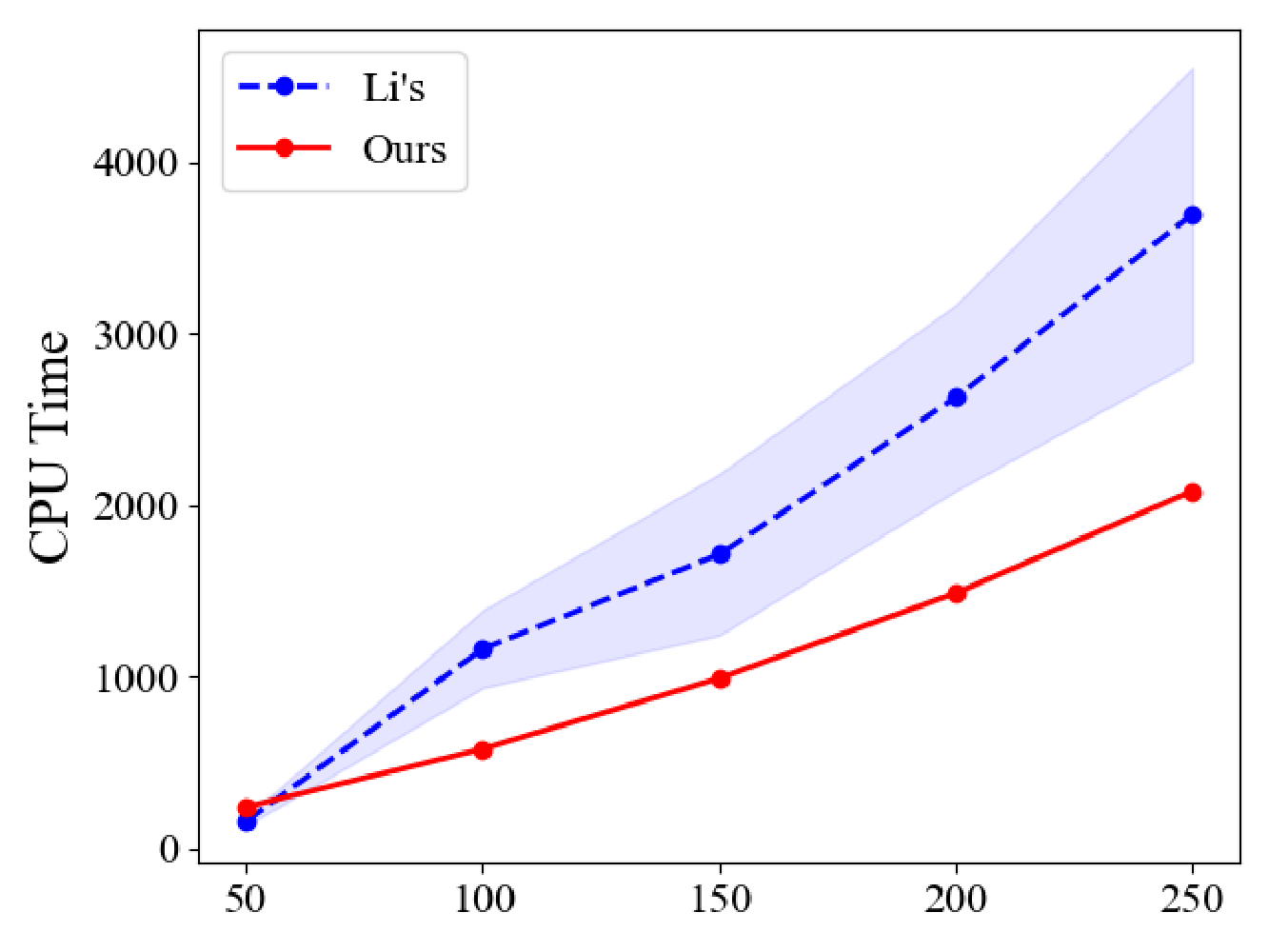}
                \label{fig:finite_R}
            }
            \caption{The computation time comparison between our algorithm and Li's algorithm, where the shadow areas represent the standard deviation of 12 trials.}
            \label{fig:finite}
        \end{figure}

        As shown in Figure~\ref{fig:finite}, both algorithms exhibit comparable computational efficiency across all parameter scales.
        A critical distinction of computational performance between
        these two algorithms lies in the stability of computation time.
         The standard deviation of Li's algorithm is big, while ours is negligible. Averaged over different $T$ values, the computation time's standard deviation is $1.2$ for our algorithm versus $59.7$ for Li's.
            Our algorithm exhibits near-zero standard deviations across different scenarios, reflecting strong robustness against random parameter settings. The above analysis also illustrates that our algorithm is usually more efficient than the value iteration-type algorithm by \cite{li2022quantile}, which is similar to the comparison of policy
            iteration and value iteration in the classical MDP theory.
            These features highlight the potential advantages of our approach's applicability to real-world problems.

            \subsection{Application to Renewable Energy Microgrid Systems}\label{sec:microgrid}
            In this subsection, we apply Algorithm~\ref{alg:PI_steady} to
            balance the renewable power supply and demand in a microgrid system.
            As shown in Figure~\ref{fig:microgrids}, the microgrid has to meet
            its uncertain electric power demand through the scheduling among
            renewable generation, energy storage, and trades with the main grid.
            The electric power trade with the main grid occurs when the
            renewable generation and discharge from the storage cannot satisfy
            the power demand. A positive (negative) power trade represents power
            selling to (buying from) the main grid. To improve the independence of
            the microgrid, we may use the VaR of the power trade to measure its
            reliance and a larger VaR is preferred. In this example, our
            objective is to optimize the charging or discharging policy of
            energy storage such that the $0.9$-VaR of the power trade can be
            maximized.

            \begin{figure}[ht]
                \centering
                \includegraphics[width=0.7\textwidth]{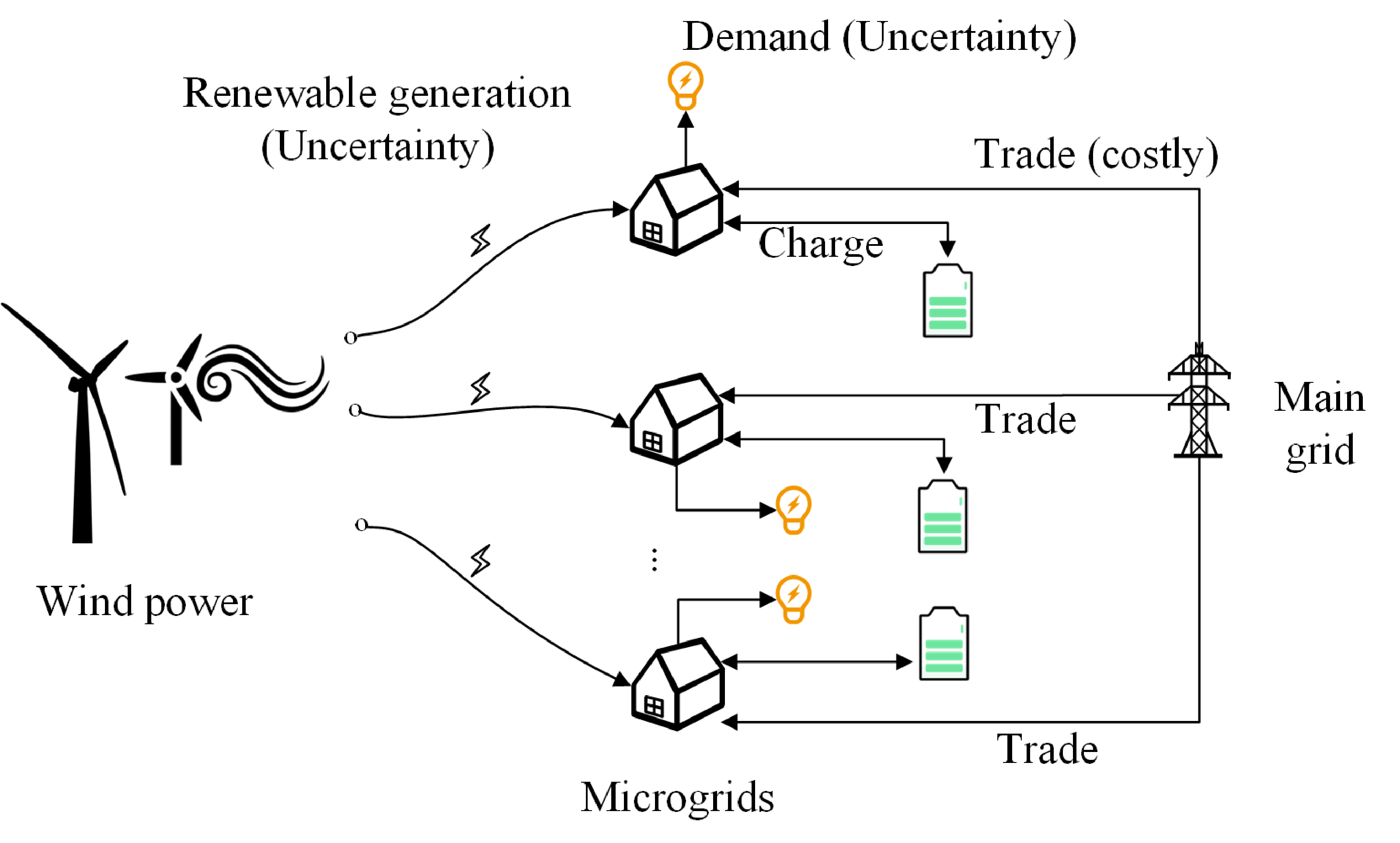}
                \caption{The illustration of a microgrid energy system.}
                \label{fig:microgrids}
            \end{figure}

            Since our algorithm is designed for MDPs with discrete state and
            action spaces, we use the uniform discretization method to
            discretize the continuous variables of the system with a $0.1$
            resolution. All the parameters of the system are defined as follows.
            \begin{itemize}
                \item \textbf{State Space.} The system state at time $t$ is defined as $\bm{s_t} = (g_t, b_t, d_t) \in \mathcal{S}$, where:
                \begin{itemize}
                    \item $g_t$: Renewable generation level and $g_t \in \mathcal{G} = \{0.0, 0.6, 1.2, 1.8, 2.4, 3.0\}$.
                    \item $b_t$: Storage level with capacity bounds, $B_{\min} \leq b_t \leq B_{\max}$, $B_{\min} = 0.4$, $B_{\max} = 3.4$.
                    \item $d_t$: Demand level and $d_t \in \mathcal{D} = \{0.6, 1.2, 1.8, 2.4, 3.0, 3.6\}$.
                \end{itemize}
                Since the storage levels are discretized into 31 states, the state space $\mathcal{S}$ contains $6 \times 31 \times 6 = 1116$ discrete states.
                \item \textbf{Action Space.} The action $a_t$ represents the charge/discharge power of the storage system at time $t$, where positive (negative) $a_t$ represents a discharge (charge) action.
                \begin{itemize}
                    \item  Maximum charge/discharge limits, $-C_{\max} \leq a_t \leq C_{\max}$, $C_{\max} = 1.2$. The action space $\mathcal{A}$ is discretized into $25$ discrete values.
                    \item Additional state-dependent constraints of capacity, $b_t - B_{\max} \leq a_t \leq b_t - B_{\min}$.
                \end{itemize}
                \item \textbf{State Transition Dynamics.} The system evolves according to the following rules:

                \textbf{(a) Renewable Generation}: We assumed the renewable generation follows a Markov chain whose state transition probability matrix $P_g(g_{t+1}|g_t)$ is given by:
                \begin{equation*}
                    \begin{bmatrix}
                        0.939 & 0.051 & 0.006 & 0.002 & 0.001 & 0.001 \\
                        0.400 & 0.443 & 0.103 & 0.029 & 0.011 & 0.014 \\
                        0.157 & 0.373 & 0.260 & 0.115 & 0.045 & 0.050 \\
                        0.079 & 0.240 & 0.250 & 0.192 & 0.104 & 0.135 \\
                        0.078 & 0.139 & 0.183 & 0.192 & 0.140 & 0.268 \\
                        0.042 & 0.074 & 0.081 & 0.099 & 0.095 & 0.609 \\
                    \end{bmatrix},
                \end{equation*}
                where rows represent the current generation states $g_t$ (from top to bottom: $0.0$ to $3.0$), and columns represent the next states $g_{t+1}$.

                \textbf{(b) Energy Demand}: We assume the energy demand also follows a Markov chain whose state transition probability matrix $P_d(d_{t+1}|d_t)$ is given by:
                \begin{equation*}
                    \begin{bmatrix}
                        0.751 & 0.249 & 0.000 & 0.000 & 0.000 & 0.000 \\
                        0.031 & 0.834 & 0.135 & 0.000 & 0.000 & 0.000 \\
                        0.000 & 0.107 & 0.819 & 0.074 & 0.000 & 0.000 \\
                        0.000 & 0.000 & 0.139 & 0.838 & 0.023 & 0.000 \\
                        0.000 & 0.000 & 0.000 & 0.189 & 0.794 & 0.017 \\
                        0.000 & 0.000 & 0.000 & 0.000 & 0.267 & 0.733 \\
                    \end{bmatrix},
                \end{equation*}
                where rows represent the current demand states $d_t$ (from top to bottom: $0.6$ to $3.6$), and columns represent the next states $d_{t+1}$.

                \textbf{(c) Storage Level}: The storage level follows a deterministic update function, i.e., $b_{t+1} = b_t - a_t$.
                \item \textbf{Reward Function}: The instantaneous reward $R_t$ represents the power trade between the microgrid and the main grid: $R_t = g_t + a_t - d_t$. Positive (negative) $R_t$ indicates power selling (buying).
                \item \textbf{Performance Criterion}: Maximize the VaR of steady-state rewards, $\max\limits_{u\in \mathcal U^{\rm{RS}}} \VaR^u$.

            \end{itemize}

            The wind speed data used in this example is obtained from the Measurement and Instrumentation Data Center (MIDC) at the National Renewable Energy Laboratory, comprising real-world measurements collected from 1996 to present \citep{jager1996nrel}.
            For demand modeling, hourly load data from the Independent Electricity System Operator (IESO) of Ontario are employed, covering the period from 2013 to 2022 \citep{su2010microgrid}. We use these
            data to generate the aforementioned transition probability
            matrices.


            We apply Algorithm~\ref{alg:PI_steady} to solve this problem. Figure~\ref{fig_engy4} illustrates the iteration procedure of VaR values under three probability levels ($\alpha = 0.1, 0.5, 0.9$).
            These trajectories exhibit strict monotonic improvement of VaR during the iteration procedure, ultimately converging to their maximum VaR values. Specifically:
            \begin{itemize}
                \item For $\alpha=0.9$, the curve shows linear growth characteristics, reaching an optimal value $0.6$. This positive value indicates the system achieves power surplus under high-$\alpha$ scenarios.
                \item For $\alpha=0.5$ and $0.1$, the curves converge to the negative VaR values $-0.6$ and $-1.6$, respectively, which indicate power buying from the main grid. Even with an optimal scheduling policy, the system has a power deficit of $1.6$ in the worst $10\%$ scenarios. 
            \end{itemize}

            \begin{figure}[htbp]
                \centering
                \includegraphics[width=0.65\columnwidth]{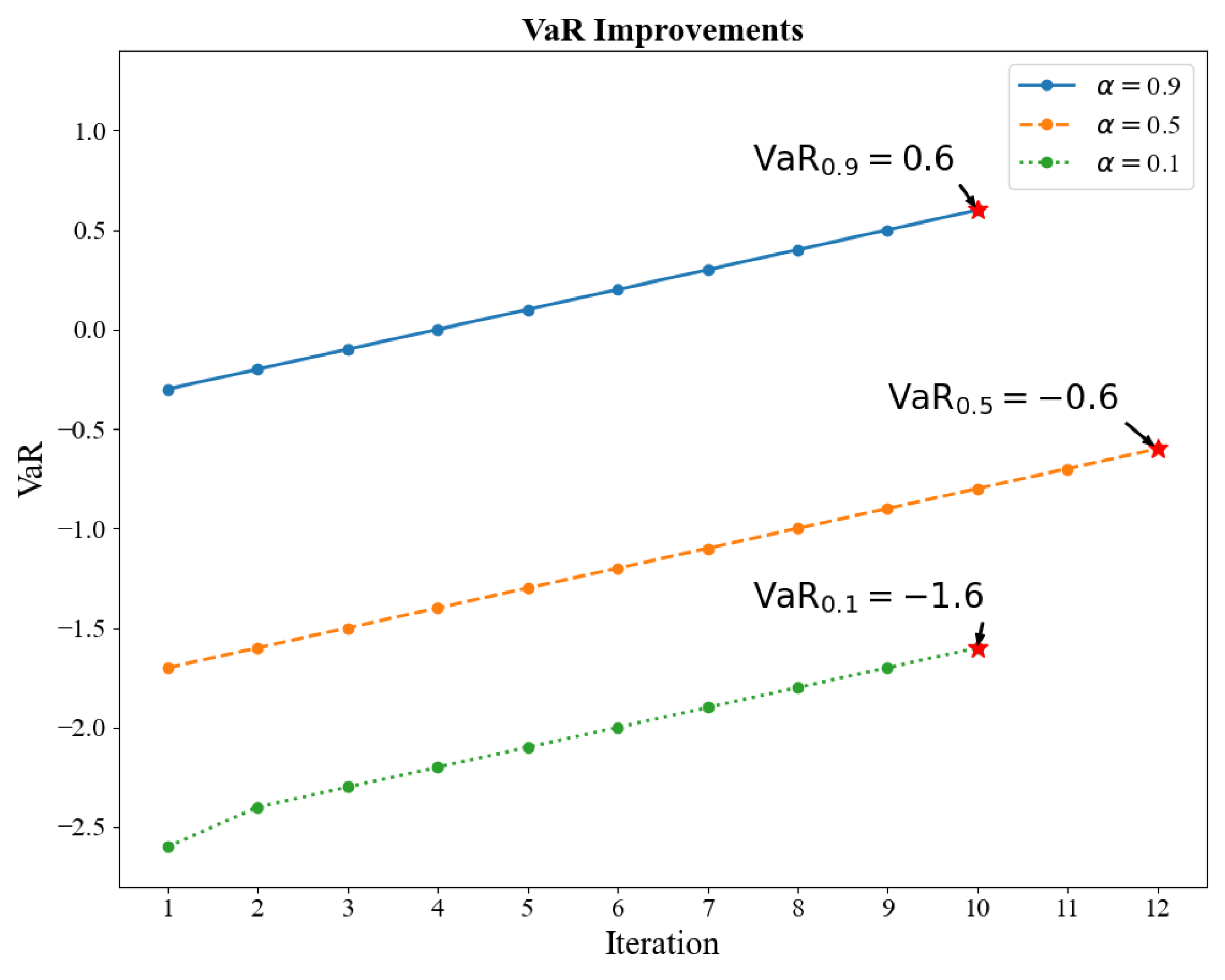}
                \caption{The improvement of VaR values in renewable energy microgrid systems.} \label{fig_engy4}
            \end{figure}

            \begin{figure}[htp]
                \centering
                \subfigure[$\alpha = 0.1$]{
                    \includegraphics[width=0.31\textwidth]{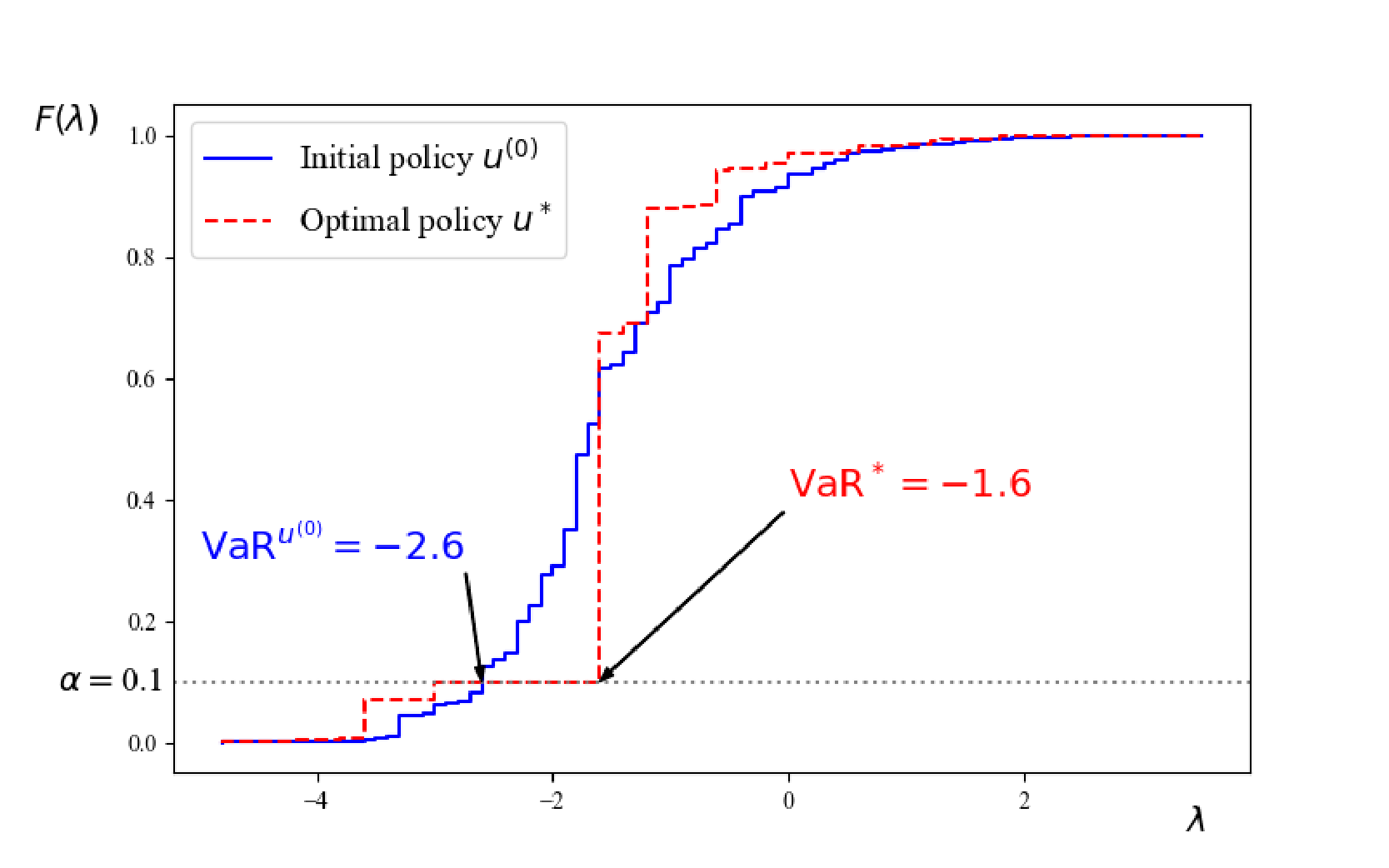}
                    \label{fig:u_e1}
                }
                \subfigure[$\alpha = 0.3$]{
                    \includegraphics[width=0.31\textwidth]{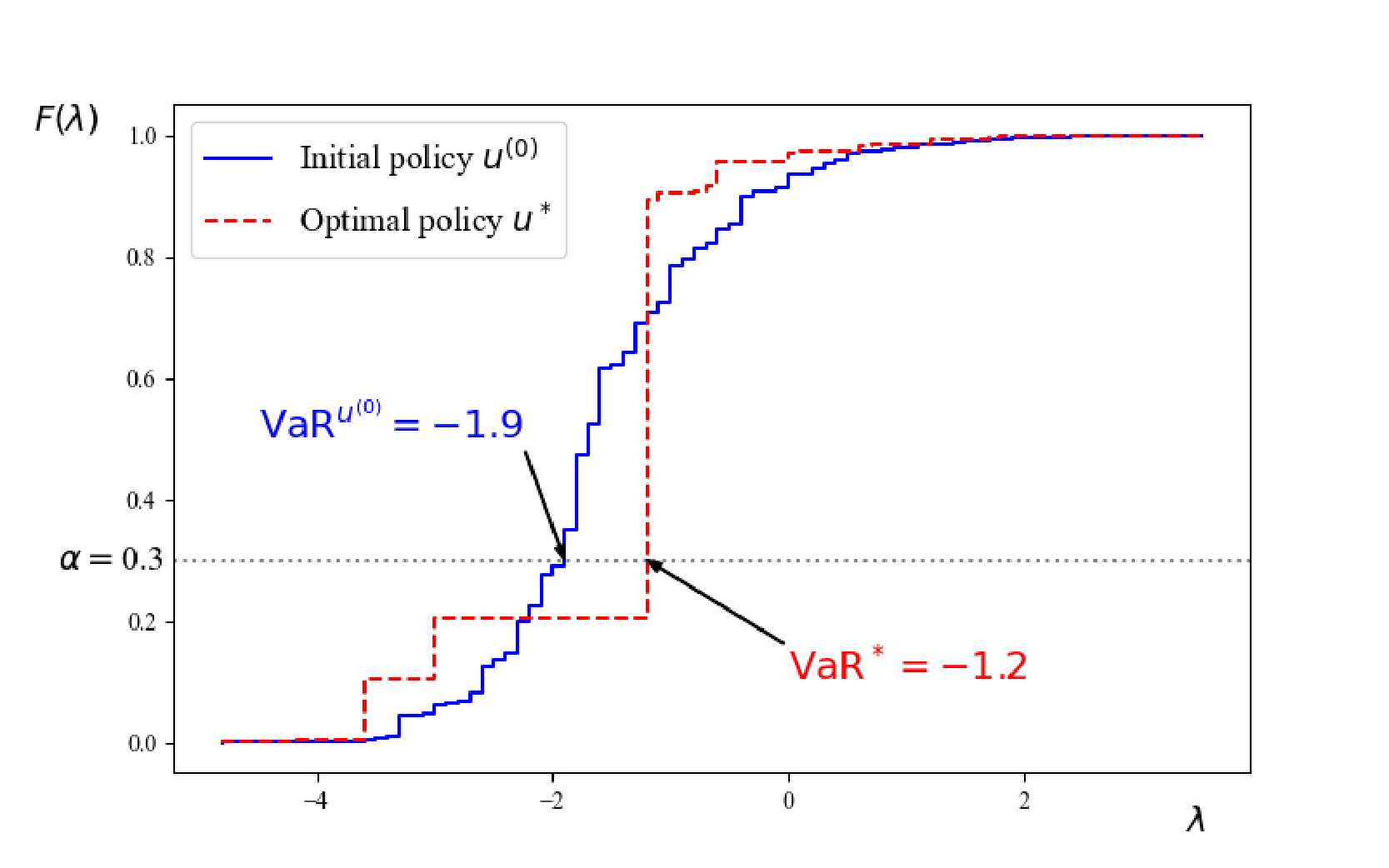}
                    \label{fig:u_e3}
                }
                \subfigure[$\alpha = 0.4$]{
                    \includegraphics[width=0.31\textwidth]{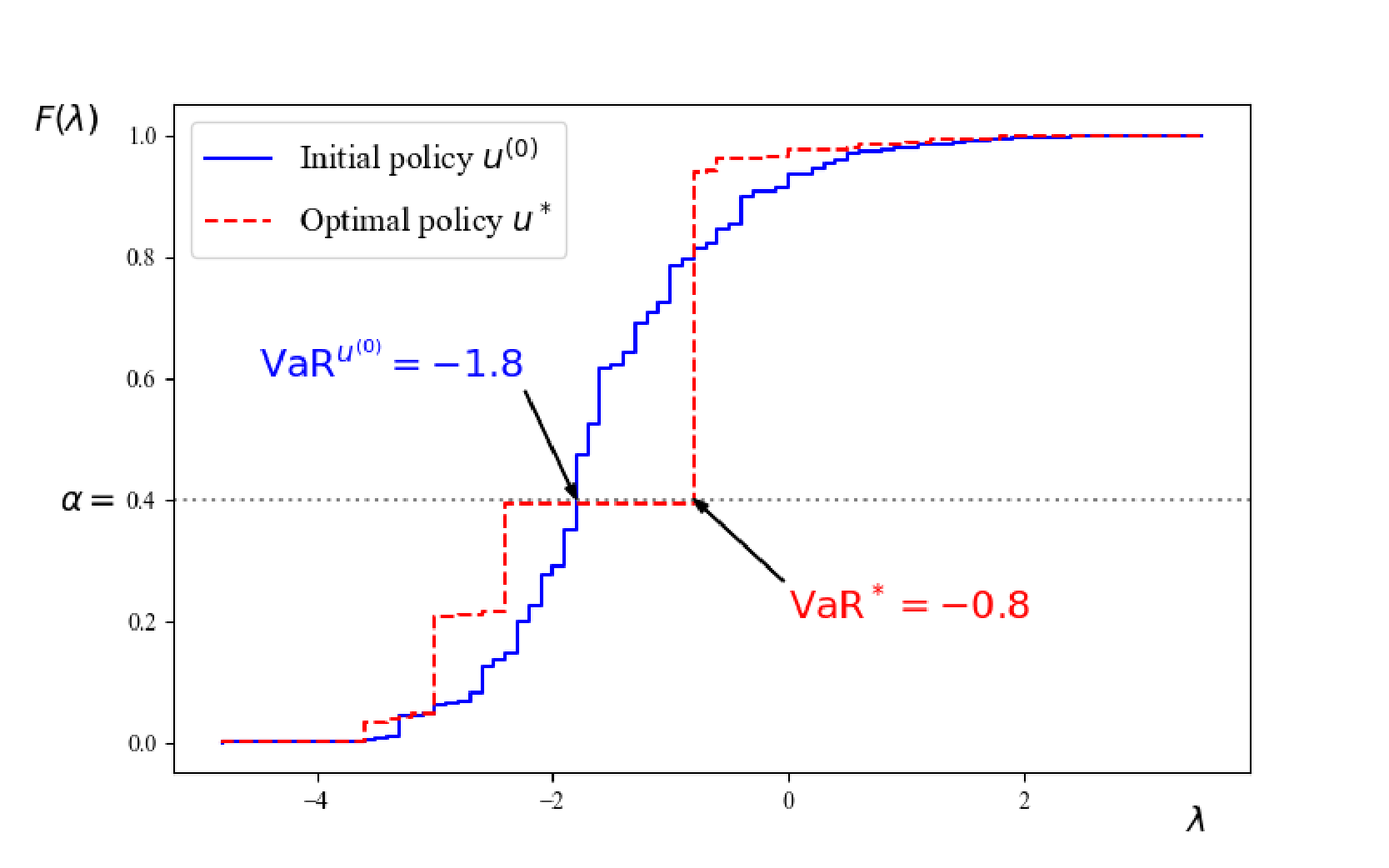}
                    \label{fig:u_e4}
                }
                \subfigure[$\alpha = 0.5$]{
                    \includegraphics[width=0.31\textwidth]{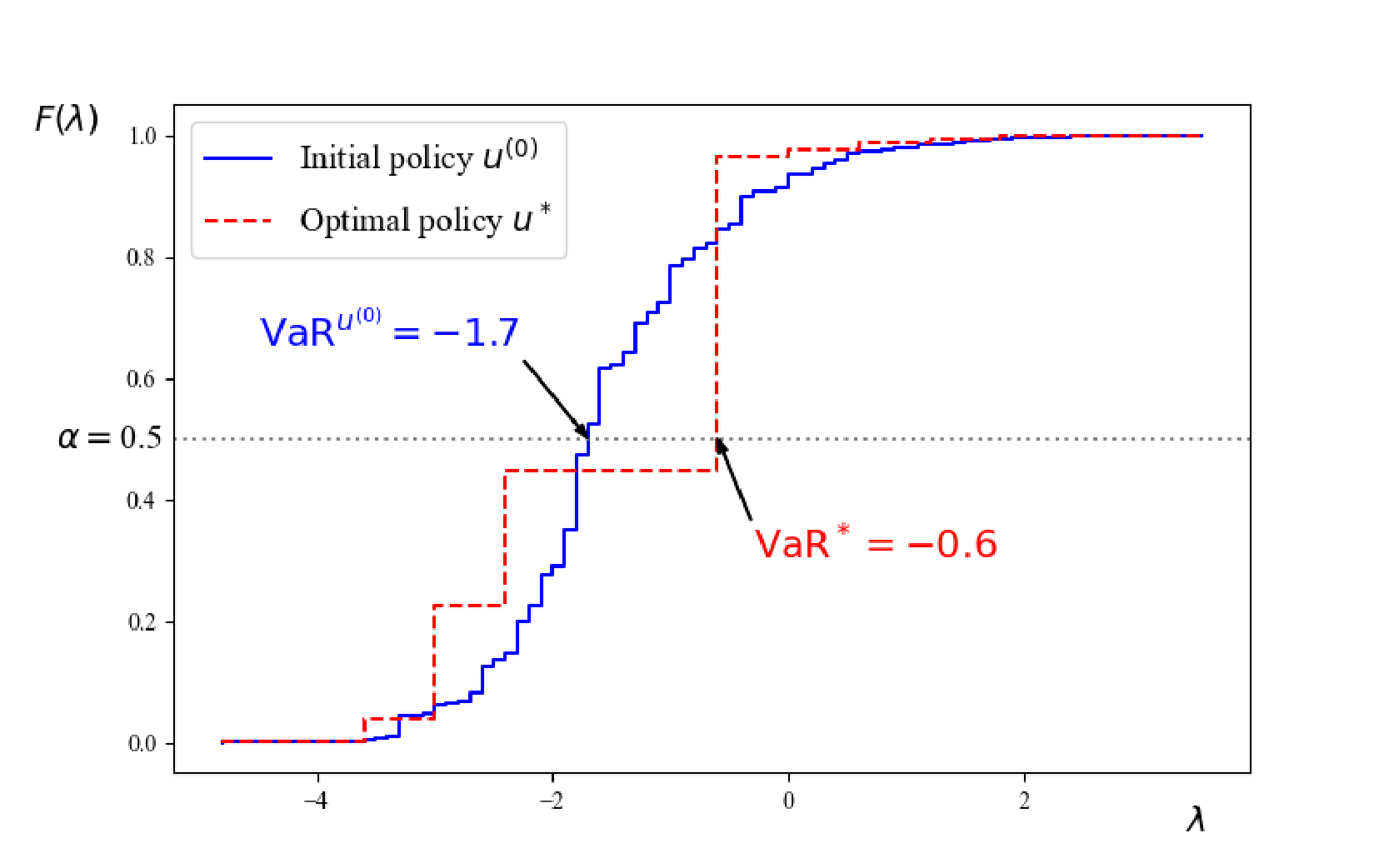}
                    \label{fig:u_e5}
                }
                \subfigure[$\alpha = 0.7$]{
                    \includegraphics[width=0.31\textwidth]{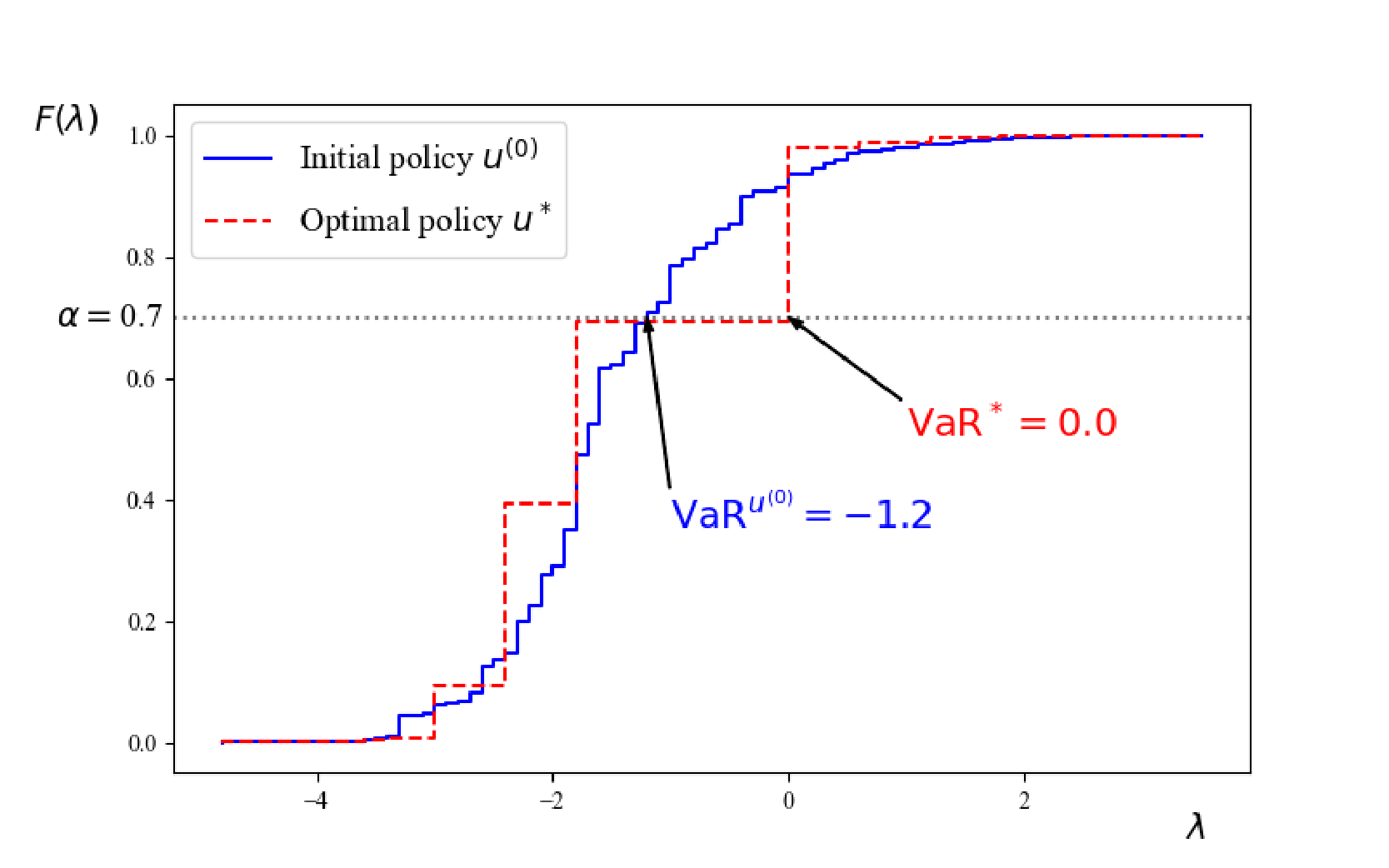}
                    \label{fig:u_e7}
                }
                \subfigure[$\alpha = 0.9$]{
                    \includegraphics[width=0.31\textwidth]{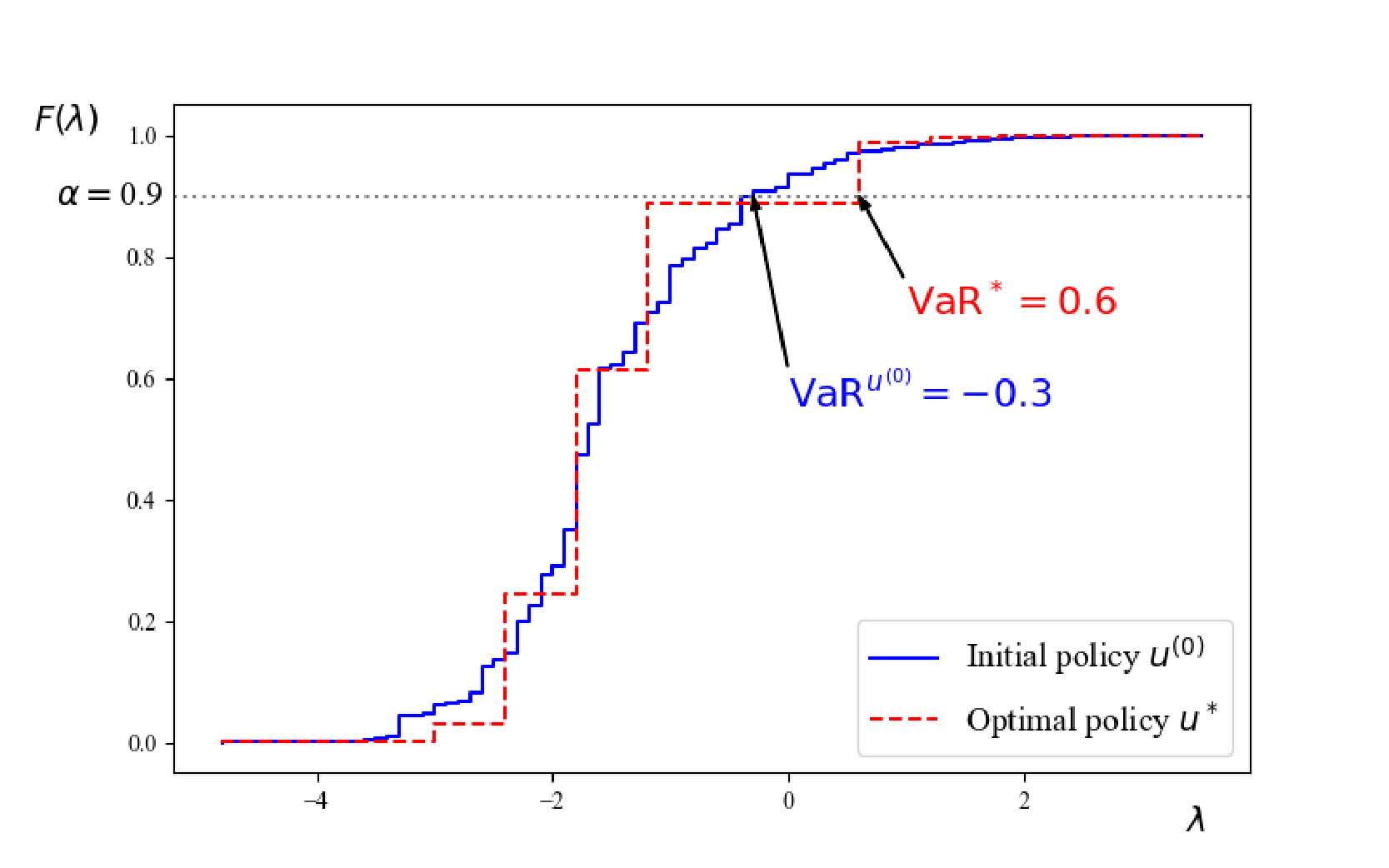}
                    \label{fig:u_e9}
                }
                \caption{CDFs of steady-state rewards under the initial and optimal policies in microgrids.}
                \label{fig:u_e_compare}
            \end{figure}

            Moreover, Figure~\ref{fig:u_e_compare} depicts the CDFs of
            steady-state rewards under the initial policy and the optimal
            policy. Compared with the initial policy, the optimal policy can
            achieve a significantly larger VaR value, i.e., improving the energy
            surplus. Optimistic scenarios (with $\alpha\ge 0.7$) can yield a
            positive power balance of the microgrid. By maximizing the VaR
            value, the microgrid system can effectively reduce the reliance on
            power buying from the main grid, improving its energy
            self-sufficiency under the high uncertainty of renewable energy
            generation.

\section{Conclusion}\label{sec:conclu}
This paper studies discrete-time MDPs with the VaR optimality
criterion, which has not been adequately explored in the literature.
Different from the existing risk decomposition method, we optimize
VaR MDPs from a new perspective by utilizing the essential relations
between VaR and probability metrics to establish its equivalence
with the probabilistic MDPs. We propose a unified bilevel
optimization framework to handle VaR optimization in various MDP
settings, including steady-state VaR MDPs and finite-horizon VaR
MDPs. The existence of an optimal deterministic policy for VaR MDPs
is proved. Our primary contribution lies in proposing an efficient
policy iteration type algorithm for finding a globally optimal
policy for VaR MDPs.

We study the VaR maximization of both steady-state rewards and
accumulated rewards. We convert the steady-state VaR maximization
MDP to a bilevel optimization problem, where the inner level is a
standard average minimization MDP and the outer level is a static
minimization problem. With the nested reformulation, we propose a
policy iteration algorithm by solving the inner standard average
minimization MDP to improve the policy, until the inner optimal
value exceeds the given probability level $\alpha$. The convergence
of the algorithm and its global optimality are further proved. The
above optimization approach is also applied to finite-horizon VaR
maximization MDPs, by replacing the inner standard average
minimization MDP with a standard finite-horizon augmented-state
minimization MDP. Additionally, we extend the results to minimize
steady-state VaR MDPs and finite-horizon VaR MDPs. Numerical
experiments are conducted to verify our main results, which show
that our policy iteration type algorithm achieves computational
efficiency comparable to, and in many cases significantly higher
than, the value iteration type algorithm in the existing literature
\citep{li2022quantile}.

One of the future research topics is to deal with discounted VaR
MDPs, optimizing the VaR of discounted accumulated rewards over an
infinite horizon. An effective policy iteration algorithm for
solving discounted VaR MDPs needs to be developed. Another promising
direction for future research involves developing data-driven
algorithms for VaR MDPs to address model-free scenarios where both
transition probabilities and reward functions are unknown. This
would enable the optimization of VaR MDPs through learning directly
from sampled trajectories.


\begin{thebibliography}{}

    \bibitem[{Ahn et~al.(1999)Ahn, Boudoukh, Richardson, and Whitelaw}]{ahn1999optimal}
    Ahn DH, Boudoukh J, Richardson M, Whitelaw RF (1999) Optimal risk management
    using options. \textit{The Journal of Finance} 54(1):359--375.

        \bibitem[{Babat et~al.(2018)Babat, Vera, and Zuluaga}]{babat2018computing}
        Babat O, Vera JC, Zuluaga LF (2018) Computing near-optimal value-at-risk
        portfolios using integer programming techniques. \textit{European Journal of
        Operational Research} 266(1):304--315.

        \bibitem[{Bandi et~al.(2024)Bandi, Han, and Proskynitopoulos}]{bandi2024robust}
        Bandi C, Han E, Proskynitopoulos A (2024) Robust queue inference from waiting
        times. \textit{Operations Research} 72(2):459--480.

        \bibitem[{Basak and Shapiro(2001)}]{basak2001value}
        Basak S, Shapiro A (2001) Value-at-risk-based risk management: Optimal policies
        and asset prices. \textit{The Review of Financial Studies} 14(2):371--405.

        \bibitem[{B{\"a}uerle and
            Ja{\'s}kiewicz(2024)}]{bauerle2024markov}
        B{\"a}uerle N, Ja{\'s}kiewicz A (2024) Markov decision processes
        with risk-sensitive criteria: An overview. \textit{Mathematical Methods of Operations Research} 99(1):141--178.


        \bibitem[{B{\"a}uerle and Ott(2011)}]{bauerle2011}
        B{\"a}uerle N, Ott J (2011) {Markov decision processes with
            average-value-at-risk criteria}. \textit{Mathematical Methods of Operations
            Research} 74(3):361--379.


        \bibitem[{B{\"a}uerle and Rieder(2014)}]{bauerle2014more}
        B{\"a}uerle N, Rieder U (2014) More risk-sensitive Markov decision processes.
        \textit{Mathematics of Operations Research} 39(1):105--120.

        \bibitem[{Benati and Rizzi(2007)}]{benati2007mixed}
        Benati S, Rizzi R (2007) A mixed integer linear programming formulation of the
        optimal mean/value-at-risk portfolio problem. \textit{European Journal of Operational
        Research} 176(1):423--434.

        \bibitem[{Berkowitz and O'Brien(2002)}]{berkowitz2002accurate}
        Berkowitz J, O'Brien J (2002) How accurate are value-at-risk models at
        commercial banks? \textit{The Journal of Finance} 57(3):1093--1111.


        \bibitem[{Chapman et~al.(2021)Chapman, Bonalli, Smith, Yang, Pavone, and
            Tomlin}]{chapman2021risk}
        Chapman MP, Bonalli R, Smith KM, Yang I, Pavone M, Tomlin CJ (2021)
        Risk-sensitive safety analysis using conditional value-at-risk. \textit{IEEE
        Transactions on Automatic Control} 67(12):6521--6536.


        \bibitem[{Chow et~al.(2015)Chow, Tamar, Mannor, and Pavone}]{chow2015risk}
        Chow Y, Tamar A, Mannor S, Pavone M (2015) Risk-sensitive and robust decision-making:
        A CVaR optimization approach. \textit{Advances in Neural Information Processing Systems (NIPS'2015)} 28:1522--1530.

        \bibitem[{Christoffersen(2011)}]{christoffersen2011elements}
        Christoffersen P (2011) \textit{Elements of Financial Risk Management}. Academic Press.



        \bibitem[{Cuoco et~al.(2008)Cuoco, He, and Isaenko}]{cuoco2008optimal}
        Cuoco D, He H, Isaenko S (2008) Optimal dynamic trading strategies with risk
        limits. \textit{Operations Research} 56(2):358--368.



        \bibitem[{Ding and Feinberg(2022{\natexlab{a}})}]{ding2022cvar}
        Ding R, Feinberg E (2022{\natexlab{a}}) CVaR optimization for MDPs: Existence
        and computation of optimal policies. \textit{ACM SIGMETRICS Performance Evaluation
        Review} 50(2):39--41.

        \bibitem[{Ding and Feinberg(2022{\natexlab{b}})}]{ding2022sequential}
        Ding R, Feinberg E (2022{\natexlab{b}}) Sequential optimization of CVaR. \textit{arXiv
        preprint arXiv:221107288}.

        \bibitem[{Duffie and Pan(1997)}]{duffie1997overview}
        Duffie D, Pan J (1997) An overview of value at risk. \textit{Journal of
        Derivatives} 4(3):7--49.


        \bibitem[{Feng et~al.(2015)Feng, W{\"a}chter, and Staum}]{feng2015practical}
        Feng M, W{\"a}chter A, Staum J (2015) Practical algorithms for value-at-risk
        portfolio optimization problems. \textit{Quantitative Finance Letters} 3(1):1--9.

        \bibitem[{Filar et~al.(1995)Filar, Krass, and Ross}]{filar1995percentile}
        Filar JA, Krass D, Ross KW (1995) Percentile performance criteria for limiting
        average Markov decision processes. \textit{IEEE Transactions on Automatic Control}
        40(1):2--10.

        \bibitem[{Gaivoronski and Pflug(2005)}]{gaivoronski2005value}
        Gaivoronski AA, Pflug G (2005) Value-at-risk in portfolio optimization:
        properties and computational approach. \textit{Journal of Risk} 7(2):1--31.

        \bibitem[{Ghaoui et~al.(2003)Ghaoui, Oks, and Oustry}]{ghaoui2003worst}
        Ghaoui LE, Oks M, Oustry F (2003) Worst-case value-at-risk and robust portfolio
        optimization: A conic programming approach. \textit{Operations Research} 51(4):543--556.

        \bibitem[{Gilbert and Weng(2016)}]{Gilbert2016} Gilbert H, Weng P (2016) Quantile reinforcement
        learning.  \textit{arXiv preprint arXiv:1611.00862}.

        \bibitem[{Goh et~al.(2012)Goh, Lim, Sim, and Zhang}]{goh2012portfolio}
        Goh JW, Lim KG, Sim M, Zhang W (2012) Portfolio value-at-risk optimization for
        asymmetrically distributed asset returns. \textit{European Journal of Operational
        Research} 221(2):397--406.

        \bibitem[{Hau et~al.(2023)Hau, Delage, Ghavamzadeh, and
            Petrik}]{hau2023dynamic}
        Hau JL, Delage E, Ghavamzadeh M, Petrik M (2023) On dynamic programming
        decompositions of static risk measures in Markov decision processes. \textit{Advances
        in Neural Information Processing Systems} 36:51734--51757.

        \bibitem[{Hau et~al.(2024)Hau, Delage, Derman, Ghavamzadeh, and
            Petrik}]{hau2024q}
        Hau JL, Delage E, Derman E, Ghavamzadeh M, Petrik M (2024) Q-learning for
        quantile MDPs: A decomposition, performance, and convergence analysis. \textit{arXiv
        preprint arXiv:241024128}.

        \bibitem[{Hern{\'a}ndez-Lerma and Lasserre(1996)}]{Hernandez-Lerma1996}
        Hern{\'a}ndez-Lerma O, Lasserre JB (1996) \textit{Discrete-Time Markov Control
        Processes}. {Springer Science \& Business Media.}

        \bibitem[{Howard and Matheson(1972)}]{howard1972}
        Howard RA, Matheson JE (1972) {Risk-sensitive Markov decision processes}.
        \textit{Management Science} 18(7):356--369.

        \bibitem[{Huang and Guo(2015)}]{huang2015mean}
        Huang Y, Guo X (2015) Mean-variance problems for finite horizon semi-Markov
        decision processes. \textit{Applied Mathematics \& Optimization} 72(2):233--259.

        \bibitem[{Huang and Guo(2016)}]{huang2016}
        Huang Y, Guo X (2016) {Minimum average value-at-risk for finite horizon
        semi-Markov decision processes in continuous time}. \textit{SIAM Journal on Optimization} 26(1):1--28.

        \bibitem[{Huo et~al.(2017)Huo, Zou, and Guo}]{huo2017}
        Huo H, Zou X, Guo X (2017) {The risk probability criterion for discounted continuous-time Markov decision processes}.
        \textit{Discrete Event Dynamic Systems} 27(4):675--699.

        \bibitem[{Jager and Andreas(1996)}]{jager1996nrel}
        Jager D, Andreas A (1996) Nrel national wind technology center (nwtc): M2
        tower; boulder, colorado (data). Tech. rep., National Renewable Energy
        Lab.(NREL), Golden, CO (United States). Available: \url{https://midcdmz.nrel.gov/apps/sitehome.pl?site=NWTC}

        \bibitem[{Jiang et~al.(2023)Jiang, Hu, and Peng}]{jiang2023quantile}
        Jiang J, Hu J, Peng Y (2023) Quantile-based deep reinforcement learning using
        two-timescale policy gradient algorithms. \textit{arXiv preprint arXiv:230507248}.


        \bibitem[{Kouvelis and Li(2019)}]{kouvelis2019integrated}
        Kouvelis P, Li R (2019) Integrated risk management for newsvendors with
        value-at-risk constraints. \textit{Manufacturing \& Service Operations Management}
        21(4):816--832.

        \bibitem[{Larsen et~al.(2002)Larsen, Mausser, and Uryasev}]{larsen2002algorithms}
        Larsen N, Mausser H, Uryasev S (2002) Algorithms for optimization of
        value-at-risk. In: \textit{Financial Engineering, E-commerce and Supply Chain}, pp
        19--46.


        \bibitem[{Li et~al.(2022)Li, Zhong, and Brandeau}]{li2022quantile}
        Li X, Zhong H, Brandeau ML (2022) Quantile Markov decision processes.
        \textit{Operations Research} 70(3):1428--1447.

        \bibitem[{Luedtke(2014)}]{luedtke2014branch}
        Luedtke J (2014) A branch-and-cut decomposition algorithm for solving
        chance-constrained mathematical programs with finite support. \textit{Mathematical
        Programming} 146(1):219--244.


        \bibitem[{Mausser and Romanko(2014)}]{mausser2014cvar}
        Mausser H, Romanko O (2014) CVaR proxies for minimizing scenario-based
        value-at-risk. \textit{Journal of Industrial and Management Optimization}
        10(4):1109--1127.


        \bibitem[{Olsen et~al.(2017)Olsen, Tian, Wallace, Nickel, Warren, Fraser,
                Selvam, and Hamilton}]{olsen2017use}
            Olsen MA, Tian F, Wallace AE, Nickel KB, Warren DK, Fraser VJ, Selvam N,
            Hamilton BH (2017) Use of quantile regression to determine the impact on
            total health care costs of surgical site infections following common
            ambulatory procedures. \textit{Annals of Surgery} 265(2):331--339.

        \bibitem[{Oum and Oren(2010)}]{oum2010optimal}
            Oum Y, Oren SS (2010) Optimal static hedging of volumetric risk in a
            competitive wholesale electricity market. \textit{Decision Analysis} 7(1):107--122.

        \bibitem[{Pavlikov et~al.(2018)Pavlikov, Veremyev, and
                Pasiliao}]{pavlikov2018optimization}
            Pavlikov K, Veremyev A, Pasiliao EL (2018) Optimization of value-at-risk:
            Computational aspects of MIP formulations. \textit{Journal of the Operational
            Research Society} 69(5):676--690.


         \bibitem[{Pflug and Pichler(2016)}]{pflug2016time}
        Pflug GC, Pichler A (2016) Time-consistent decisions and temporal decomposition
        of coherent risk functionals. \textit{Mathematics of Operations Research}
        41(2):682--699.

        \bibitem[{Puterman(1994)}]{puterman1994markov}
        Puterman ML (1994) \textit{Markov Decision Processes: Discrete Stochastic
            Dynamic Programming}. John Wiley \& Sons.


        \bibitem[{Rockafellar(2017)}]{rockafellar2017risk}
        Rockafellar RT (2017) Risk and utility in the duality framework of convex
        analysis. In: \textit{Jonathan M. Borwein Commemorative Conference}, Springer, pp
        21--42.


        \bibitem[Rockafellar and Uryasev(2002)]{Rockafellar02}
        Rockafellar RT, Uryasev S (2002) Conditional value-at-risk for general loss distributions.
        \textit{Journal of Banking Finance} 26:1443--1471.

        \bibitem[{Romanko and Mausser(2016)}]{romanko2016robust}
        Romanko O, Mausser H (2016) Robust scenario-based value-at-risk optimization.
        \textit{Annals of Operations Research} 237:203--218.



        \bibitem[{Spada et~al.(2018)Spada, Paraschiv, and
            Burgherr}]{spada2018comparison}
        Spada M, Paraschiv F, Burgherr P (2018) A comparison of risk measures for
        accidents in the energy sector and their implications on decision-making
        strategies. \textit{Energy} 154:277--288.

        \bibitem[{Su et~al.(2010)Su, Yuan, and Chow}]{su2010microgrid}
        Su W, Yuan Z, Chow MY (2010) {Microgrid planning and operation: Solar energy
            and wind energy}. In: \textit{IEEE PES General Meeting}, IEEE, pp 1--7.


        \bibitem[U\v{g}urlu(2017)]{Ugurlu17}
        U\v{g}urlu K (2017) Controlled Markov decision processes with AVaR criteria for unbounded costs.
        \textit{Journal of Computational and Applied Mathematics} 319:24--37.

        \bibitem[{White(1993)}]{white1993}
        White DJ (1993) {Minimizing a threshold probability in discounted Markov
            decision processes}. \textit{Journal of Mathematical Analysis and
            Applications} 173(2):634--646.


        \bibitem[{Xia et~al.(2023)Xia, Zhang, and Glynn}]{xia2023risk}
            Xia L, Zhang L, Glynn PW (2023) Risk-sensitive Markov decision processes with
            long-run CVaR criterion. \textit{Production and Operations Management}
            32(12):4049--4067.


    \end{thebibliography}
\end{document}